\numberwithin{equation}{section}
\newtheorem{example}{Example}[section]
\newtheorem{remark}[example]{Remark}
\newtheorem{construction}[example]{Construction}
 \def\norm#1#2{\Vert\,#1\,\Vert_{#2}}
\newcommand{\R}{{\mathbb R}}
\newcommand{\N}{{\mathbb N}}
\renewcommand{\det}{{\rm det}\, }
\newcommand{\dd}{\,\mathrm{d}}
\newcommand{\be}{\begin{eqnarray}}
\newcommand{\ee}{\end{eqnarray}}
\renewcommand{\d}{{\rm d}}
\newcommand{\md}{{\rm d}}
\renewcommand{\O}{\Omega}
\newcommand{\s}{\sigma}
\newcommand{\A}[1]{\langle#1\rangle}
\newcommand{\RK}{{\R_K^{2 \times 2}}}
\newcommand{\eps}{\varepsilon}
\newcommand{\GradQC}{\mathcal{GY}^{\mathcal{QC}}(\O;\R^{2\times 2})}
\newcommand{\abs}[1]{\left|#1\right|}
\newcommand{\Z}{\mathbb{Z}}
\newcommand{\unit}{{\bf e}}
\title{Gradient Young measures generated by quasiconformal maps in the plane}
\author{Barbora Bene\v{s}ov\'{a}$^{1,2}$%\email{barbora.benesova@mathematik.uni-wuerzburg.de} 
\and Malte Kampschulte$^{1}$%\email{kampschulte@math1.rwth-aachen.de}
}
\begin{document}
\maketitle
%\slugger{sima}{xxxx}{xx}{x}{x--x}

%\renewcommand{\thefootnote}{\fnsymbol{footnote}}
\footnotetext[1]{Department of Mathematics I, RWTH Aachen University, D-52056 Aachen, Germany 
}
\footnotetext[2]{Institute of Mathematics, University of W\"{u}rzburg, Emil-Fischer-Stra\ss e 40, D-97074 W\"{u}rzburg, Germany}

\begin{abstract}

In this contribution, we completely and explicitly characterize Young measures generated by gradients of quasiconformal maps in the plane. By doing so, we generalize the results of Astala and Faraco \cite{AstalaFaraco} who provided a similar result for quasiregular maps and Bene\v{s}ov\'{a} and Kru\v{z}\'{i}k \cite{bbmk2013} who characterized Young measures generated by gradients of bi-Lipschitz maps. Our results are motivated by non-linear elasticity where injectivity of the functions in the generating sequence is essential in order to assure non-interpenetration of matter.
 
\end{abstract}

\begin{keywords}
Orientation-preserving mappings, Gradient Young measures, Quasiconvexity, Quasiconformal maps, Non-interpenetration of matter
\end{keywords}

\begin{AMS}
49J45, 35B05
\end{AMS}

\section{Introduction}
In non-linear hyperelasticity one finds stable states by minimizing a prescribed \emph{stored energy} functional over the set of \emph{admissible deformations}. 

It is generally postulated \cite{ciarlet,silhavy} that an element of the set of admissible deformations should be  an \emph{orientation-preserving} and \emph{injective} map $y: \Omega \to y(\Omega)$ with a suitably integrable weak gradient; one usually assumes that $y\in W^{1,p}(\O;\R^n)$ with $1 < p \le +\infty$. Here and in the sequel $\Omega$ is a bounded Lipschitz domain, the reference configuration.

In the simplest case, we may consider stored energies $W \colon \R^{n\times n}\to\R$ that depend on the deformation only through its gradient; i.e. in order to find stable states one has to minimize the functional 
\be\label{motivation}
J(y):=\int_\O W(\nabla y(x))\,\md x\ ,
\ee
over the set of admissible deformations, possibly with some prescribed boundary data. It is natural to ask under which conditions on the set of admissible deformations and the stored energy we can find minima of $J(y)$. 

We shall take the approach of \emph{fixing a suitable set of deformations} and we will only be concerned with conditions on the stored energy; in a sense, this is complementary to the standard approach where the growth of the stored energy is fixed, which already determines a suitable set of deformations.

Thus, we set admissible deformations to be \emph{quasiconformal maps} \emph{in the plane}:
\begin{align}
\mathcal{QC}(\Omega; \R^2) = \Big\{& y \in W^{1,2}(\O;\R^2): \text{ $y$ is a homeomorphism and } \exists { K\geq 1} \text{ such that } \nonumber \\ &|\nabla y|^2 \leq K \det(\nabla y) \text{ a.e. in $\Omega$} \Big\}, \label{qc-def}
\end{align}
where { $|\cdot|$ is the spectral norm of a matrix.}

Before proceeding, let us motivate this choice from a mechanical point of view: By restricting our attention to homeomorphisms, we aim to model situations in which no failures like cracks or cavities occur. Further, we require in the spirit of \cite{bbmk2013} that the inverse $y^{-1}$ of a deformation is of the same ``quality'' as the deformation itself; here this is fulfilled since the inverse of a quasiconformal map is again quasiconformal (cf. e.g.\@ \cite[Theorem 3.1.2]{AstalaIwaniec}). The idea behind this restriction is that, in elasticity, a body returns to its original shape after the release of all loads. However, since the r\^{o}le of the reference and the deformed configuration is arbitrary, we would like to understand this ``returning'' as a new deformation, corresponding to inverse loads, that takes the deformed configuration to the reference one. Let us mention that the idea of also requiring integrability of the inverse of the deformation already appeared e.g.\@ in \cite{ball81,fonseca-gangbo,currents,tang} and very recently e.g.\@ in \cite{iwaniec1,iwaniec2,Henao,daneri-pratelli}; the use of quasiconformal maps in hyper-elasticity has been put forward in \cite{iwaniec3}.

Quasiconformal functions map infinitesimally small circles to infinitesimal ellipses of a uniformly bounded eccentricity. This means that in our modeling not even subsets of a vanishingly small measure can get deformed into a flat piece. Finally, quasiconformal mappings form, roughly speaking, the largest class of deformations that is invariant under composition with similarity transformations (i.e. shape preserving transformations in the domain and in the range).\footnote{If a family $\mathcal{F}$ of homeomorphisms of domains in $\mathbb{C}$ is normal and invariant under composition with similarity transformations (in the domain and in the range) then it consist of $K$-quasiconformal mappings with some $K \geq 1$ fixed \cite{Belinskii}.}

On the energy we include the additional requirement that it blows up if the volume of any infinitesimal part of the body shrinks to zero; i.e.
\begin{equation}\label{blowup}
W(A) \to +\infty \qquad \text{whenever} \qquad \det\, A \to 0_+.
\end{equation}

Since it is convenient to prove existence of minimizers by the \emph{direct method}, we \emph{characterize the set of energies $W$ satisfying \eqref{blowup} }such that $J(y)$ from \eqref{motivation} is \emph{weakly lower semicontinuous} on the set $\mathcal{QC}(\Omega; \mathbb{R}^2)$ (with respect to the weak convergence specified in Definition \ref{WeakConvQC}). It is expected that this will be connected to some notion of (quasi)convexity. 

Recall that we call $W$ quasiconvex \cite{morrey} if for all $A\in\R^{2\times 2}$ and all $\varphi\in W^{1,\infty}(\O;\R^2)$ such that $\varphi(x) = Ax $ on $\partial \Omega$  it holds that 
\be\label{quasiconvexity} 
|\O|W(A)\le\int_\O W(\nabla\varphi(x))\,\md x\ .\ee 
It is well known (cf., e.g., \cite{dacorogna}) that if a quasiconvex $W$ additionally satisfies the coercivity/growth condition
\begin{equation}
c(|A|^2-1) \leq W(A) \leq c(|A|^2+1),
\label{2growth}
\end{equation}
it is weakly lower semicontinuous on $W^{1,2}(\Omega; \mathbb{R}^2)$ and so in particular on the set $\mathcal{QC}(\Omega; \mathbb{R}^2)$. 

But \eqref{2growth} necessarily implies that $W$ is locally finite which is incompatible with \eqref{blowup}. Indeed, this was formulated as Problem 1 by J.M. Ball in \cite{ballOPEN} in the following way: \emph{``Prove the existence of energy minimizers for elastostatics for quasiconvex stored-energy functions satisfying \eqref{blowup}.''} Moreover, it has been recently shown in \cite{Filip2} that $W^{1,p}$-quasiconvexity with $p$ less than the dimension is even incompatible with \eqref{blowup} at all, so it seems natural to rather consider a \emph{natural generalization of quasiconvexity} tailored to quasiconformal functions. 

To this end, we introduce the concept of \emph{quasiconformally quasiconvex} functions (cf.\@ Def.\@ \ref{def:QC-quasiconvexity} below) that satisfy \eqref{quasiconvexity} \emph{only}
for all $\varphi \in \mathcal{QC}(\Omega;\R^2)$, $\varphi(x) = Ax$ on $\partial \Omega$ with $A\in\R^{2\times 2}$ having positive determinant. Clearly, this is a weaker notion than the usual $W^{1,2}$ - quasiconvexity in the sense of \cite{BallMurat} as well as quasiconvexity in the sense of \cite{morrey}. In particular, notice that now, since $W$ does not even need to be defined for matrices with negative determinant, we may as well set it to $+\infty$ on there.

In this contribution, we show that for stored energies satisfying the { following  growth condition perfectly fitted to the quasiconformal setting (cf. Remark \ref{rem:exponents}) and \emph{not contradicting} \eqref{blowup}  
\begin{equation}
0 \leq W(A) \leq c(|A|^{p} + |\det(A)|^{-q}  + 1), 
\end{equation}}
 with suitable $p=p(K)>2$ and $q=q(K)>0$, $J(y)$ is weakly lower semicontinuous  along $K$-quasiconformal sequences in the set $\mathcal{QC}(\Omega; \mathbb{R}^2)$ (in the sense of  Definition \ref{WeakConvQC}) \emph{if} it is quasiconformally quasiconvex. 

This claim follows from our main result, namely the complete and explicit characterization of gradient Young measures generated by sequences in  $\mathcal{QC}(\Omega; \mathbb{R}^2)$ (cf. Section \ref{results}). Young measures present a very convenient tool for studying weak lower semicontinuity when extending the notion of solutions from Sobolev mappings to parameterized measures \cite{ball3,fonseca-leoni,pedregal,r,tartar,tartar1,y}. The idea is to describe the limit behavior of $\{J(y_k)\}_{k\in\N}$ along a minimizing sequence $\{y_k\}_{k\in\N}$. Moreover, Young measures form one of the main relaxation techniques for non-(quasi)convex functionals as appearing when modeling solid-to-solid phase transitions \cite{ball-james1,mueller}. 

Generally speaking, the main difficulty in characterizing Young measures generated by a class of invertible mappings is that such a class is \emph{non-convex}. Thus, many of the classical techniques used in the study of Sobolev functions, like smoothing by a mollifier kernel, fail. In fact, as far as smoothing under the invertibility constraint is concerned, results in the plane were obtained only very recently in {\cite{iwaniec1,daneri-pratelli,mora1,mora2,mora3}} and are based on completely different ideas than integral kernels. From the point of view of the problem considered here, it is crucial to design a \emph{cut-off technique compatible with the invertibility constraint} that will allow us to modify a given function on a set of vanishingly small measure in such a way that it takes some prescribed form on the boundary. Indeed, the necessity of such a technique can be expected already from the very definition of (quasiconformal) quasiconvexity which requires us to verify \eqref{quasiconvexity} for maps with fixed boundary values and, in fact, cut-off techniques are exploited in all the standard proofs of characterizations of gradient Young measures \cite{k-p1,k-p2,pedregal} or weak lower semicontinuity of quasiconvex functionals \cite{dacorogna}. 

To the best of our knowledge, gradient Young measures generated by invertible maps were so far characterized only in \cite{bbmk2013}, where an explicit characterization of Young measures generated by bi-Lipschitz maps in the plane was given. Moreover, the particular case of \emph{homogeneous} gradient Young measures generated by quasiconformal maps has been treated in \cite{AstalaFaraco}, but in the non-homogeneous case the global invertibility was given up and only Young measures generated by quasiregular maps were characterized. The present contribution generalizes the above two works in the sense that we adapt the cut-off technique from \cite{bbmk2013} also to the quasiconformal case and so characterize non-homogeneous gradient Young measures generated by invertible quasiregular maps. 

We follow the main strategy of \cite{bbmk2013} to construct the cut-off, i.e.\@  we modify the given sequence on a set of gradually vanishing measure near the boundary first on a one dimensional grid and then rely on extension theorems for quasiconformal maps going back to \cite{BeurlingAhlfors}. Nevertheless, contrary to the bi-Lipschitz case the modification on the grid is much more involved and cannot be done by what is essentially an affine interpolation as in the Lipschitz case \cite{bbmk2013} (cf. Section \ref{sect-cutOff}). Therefore, it seems feasible that the presented constructions on the grid will carry over also to more general settings of, e.g., bi-Sobolev maps. Nevertheless, the extension from the grid or, in other words the full characterization of traces, is completely open in these more general situations.  

Let us note that even if one poses only point-wise constraints on the determinant of the deformation (e.g.\@ by requiring $\det(\cdot) > 0$), similar difficulties to the ones described above arise. However, in some less rigid situations, one may rely on convex integration to construct cut-offs. Such an approach has been taken in \cite{bbmkgpYm} as well as in \cite{krw,Filip2}, where Young measures generated by orientation preserving maps in $W^{1,p}$ for $1<p<n$ were characterized.

The plan of the paper is as follows: In the rest of the introduction, we give some background on Young measures and explain some basic concepts that we shall use throughout the contribution. Then we state the main results in 
Section~\ref{results} and recall some facts about quasiconformal maps in Section \ref{Start}. Proofs of the main theorems are postponed to Section~\ref{Proofs} while our cut-off technique is presented in Section \ref{sect-cutOff}.

\subsection{Background on gradient Young measures}

We understand Young measures as ``generalized functions'' that capture the asymptotic behavior of a non-linear functional along an oscillating sequence $\{Y_k\}_{k \in \mathbb{N}}$. Namely, suppose that $\{Y_k\}_{k \in \mathbb{N}}$ is bounded in $L^{2}(\Omega; \mathbb{R}^{2\times 2})$, then a classical result \cite{tartar,warga,ball3,MKTR}, the \emph{fundamental Young measure theorem}, states that there exists a sub-sequence (not relabeled) of $\{Y_k\}_{k \in \mathbb{N}}$ and a family of probability measures $\nu=\{\nu_x\}_{x\in\O}$ satisfying
\be\label{jedna2}
\lim_{k\to\infty} \int_\Omega \xi(x) (v \circ Y_k) (x) \dd x =\int_\Omega \int_{\R^{2\times 2}}\xi(x) v(s)\nu_x(\d s) \dd x\ \
\ee
for all $\xi \in L^\infty(\Omega)$ and all $v \in C(\R^{2 \times 2})$ such that $\{v \circ Y_k\}_{k \in \mathbb{N}}$ is weakly convergent in $L^1(\Omega)$. We say that $\{Y_k\}_{k \in \mathbb{N}}$ \emph{generates} the Young measure $\nu=\{\nu_x\}_{x\in\O}$. It is further known that $\nu_x  \in L^\infty_{\rm
w*}(\O;\mathcal{M}(\R^{2\times 2}))\cong L^1(\O;C_0(\R^{2\times 2}))^*$ where $\mathcal{M}(\R^{2\times 2})$ is the set of Radon measures, $C_0(\R^{2\times 2})$ stands for
the space of all continuous functions $\R^{2\times 2}\to\R$ vanishing at infinity and space $L^\infty_{\rm w*}(\O)$ corresponds to weakly* measurable uniformly bounded functions. Recall that weakly* measurable means that,
for any $v\in C_0(\R^{n\times n})$, the mapping
$\O\to\R:x\mapsto\A{\nu_x,v}=\int_{\R^{n\times n}} v(s)\nu_x(\d s)$ is
measurable in the usual sense.

An important subset of Young measures are those generated by \emph{gradients} of $\{y_k\}_{k\in\N}\subset W^{1,2}(\O;\R^2)$, i.e.,  $Y_k:=\nabla y_k$ in \eqref{jedna2}. Let us denote this set ${\cal GY}^2(\O;\R^{2\times 2})$. An explicit characterization of this set is due to Kinderlehrer and Pedregal \cite{k-p1,k-p2}:
\begin{theorem}[adapted from \cite{k-p2}]
\label{k-p-thm}
A family of Young measures $\{\nu_x\}_{x \in \Omega}$ is in ${\cal GY}^2(\O;\R^{2\times 2})$ if and only if
\begin{enumerate}
  \item  there exists $z \in W^{1,2}(\Omega; \R^2)$ such that $\nabla z(x) = \int_{\R^{2\times 2}} A \nu_x(\d A)$  for a.e.\@ $x \in \Omega$,
  \item $\psi(\nabla z(x)) \leq \int_{\R^{2\times 2}}\psi(A) \nu_x(\d A) $ for a.e. $x \in \Omega$ and for all $\psi$ quasiconvex, continuous and such that $|\psi(A)| \leq c(1+|A|^2)$,
  \item $\int_\Omega \int_{\R^{2\times 2}} |A|^2  \nu_x(\d A) \dd x  < \infty$.
\end{enumerate}
\end{theorem}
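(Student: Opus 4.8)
The plan is to prove the two implications separately, treating the ``only if'' direction as essentially routine and concentrating on the ``if'' direction, which carries the substance of the Kinderlehrer--Pedregal argument. For necessity, assume $\{\nu_x\}_{x\in\Omega}$ is generated by $\{\nabla y_k\}$ with $\{y_k\}$ bounded in $W^{1,2}(\Omega;\R^2)$. Condition~3 is immediate from the fundamental theorem of Young measures and the uniform bound, since $\int_\Omega\langle\nu_x,|\cdot|^2\rangle\,\dd x\le\liminf_k\|\nabla y_k\|_{L^2}^2$. Extracting a further subsequence, $y_k\wto z$ in $W^{1,2}$, and testing \eqref{jedna2} with affine $v$ identifies the barycenter $\langle\nu_x,\mathrm{id}\rangle=\nabla z(x)$ for a.e.\ $x$, which is condition~1. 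Condition~2 is precisely the weak lower semicontinuity of quasiconvex integrands of quadratic growth on $W^{1,2}$ (Morrey, Acerbi--Fusco): for quasiconvex $\psi$ with $|\psi(A)|\le c(1+|A|^2)$ one has, on every ball $B$ with $\overline B\subset\Omega$, $\int_B\psi(\nabla z)\,\dd x\le\liminf_k\int_B\psi(\nabla y_k)\,\dd x=\int_B\langle\nu_x,\psi\rangle\,\dd x$, where the last equality uses \eqref{jedna2} after invoking a truncation/decomposition lemma to replace $\{\nabla y_k\}$ by a $2$-equiintegrable sequence with the same Young measure; shrinking $B$ to a Lebesgue point gives the pointwise inequality.

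The heart of sufficiency is the homogeneous case: a single probability measure $\nu$ with finite second moment and barycenter $\bar A:=\langle\nu,\mathrm{id}\rangle$ such that $\psi(\bar A)\le\langle\nu,\psi\rangle$ for every quasiconvex $\psi$ of quadratic growth is generated by gradients $\nabla\varphi_k$ with $\varphi_k-\bar A\,x\in W^{1,2}_0$. I would argue by duality. The set $\mathcal H_{\bar A}$ of homogeneous $W^{1,2}$ gradient Young measures with barycenter $\bar A$ and second moment bounded by a fixed $R$ is convex (generating sequences can be glued on subcubes) and weakly-$*$ compact (fundamental theorem), and for nonnegative continuous $f$ of quadratic growth its quasiconvex envelope satisfies $Qf(\bar A)=\min\{\langle\mu,f\rangle:\mu\in\mathcal H_{\bar A}\}$, a Young-measure reformulation of Dacorogna's relaxation formula, the minimum being attained by compactness. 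If $\nu\notin\mathcal H_{\bar A}$, Hahn--Banach separation produces a continuous $f$ of quadratic growth with $\langle\nu,f\rangle<\min_{\mu\in\mathcal H_{\bar A}}\langle\mu,f\rangle=Qf(\bar A)$; but $Qf$ is itself quasiconvex of quadratic growth, so Jensen for $\nu$ forces $Qf(\bar A)\le\langle\nu,Qf\rangle\le\langle\nu,f\rangle$, a contradiction. Hence $\nu\in\mathcal H_{\bar A}$.

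For the general case, given $\{\nu_x\}$ satisfying 1--3 I would pass through piecewise-homogeneous families: partition $\Omega$ into small cubes $\{Q_i\}$ and, on each, replace $\nu_x$ by the average $\bar\nu_i:=|Q_i|^{-1}\int_{Q_i}\nu_x\,\dd x$. Averaging preserves condition~2 relative to the barycenter $|Q_i|^{-1}\int_{Q_i}\nabla z$ --- the key lemma, which follows cleanly from the homogeneous characterization because an average of gradient Young measures is again one --- the second moments converge as the mesh $\to 0$, and the piecewise-homogeneous families converge weakly-$*$ to $\{\nu_x\}$ in $L^1(\Omega;C_0(\R^{2\times2}))^*$. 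On each $Q_i$ the homogeneous case, rescaled to $Q_i$, furnishes $\varphi^i_k$ with affine boundary data $(|Q_i|^{-1}\int_{Q_i}\nabla z)\,x$ generating $\bar\nu_i$; pasting these together with a piecewise-affine approximation of $z$ yields, for each mesh level, a $W^{1,2}$ map whose gradient generates the approximant. A diagonal argument over mesh size and over $k$, using metrizability of the weak-$*$ topology on norm-bounded subsets of $L^1(\Omega;C_0(\R^{2\times2}))^*$, then produces a single generating sequence.

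The main obstacle is the homogeneous case, and within it the functional-analytic bookkeeping forced by the fact that the natural space of test functions (continuous, of quadratic growth) is not reflexive and that mass can escape to infinity under weak-$*$ limits. The separation argument must be run together with a truncation scheme that simultaneously reduces a general quasiconvex $\psi$ of quadratic growth to admissible separating functionals, keeps the second moment inside a weakly-$*$ compact slice (so that $\langle\cdot,f\rangle$ is at least weakly-$*$ lower semicontinuous there), and certifies that $Qf$ is an admissible test function realized as the relaxed energy; this is exactly where Kinderlehrer and Pedregal's truncation technique does the real work. The averaging lemma used in the general case is a secondary difficulty, most transparently handled \emph{after} the homogeneous characterization is in hand.
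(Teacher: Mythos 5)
The paper offers no proof of Theorem \ref{k-p-thm}: it is quoted as a known result adapted from Kinderlehrer and Pedregal \cite{k-p2} and used as a black box (e.g.\ in the proof of Theorem \ref{THM1}), so there is no in-paper argument to compare yours against. Judged against the cited source, your reconstruction follows the standard architecture faithfully: necessity via weak lower semicontinuity of quasiconvex integrands of quadratic growth together with the decomposition lemma and localization at Lebesgue points; sufficiency first in the homogeneous case by Hahn--Banach separation against the relaxation formula $Qf(\bar A)=\inf_\mu\langle\mu,f\rangle$ over homogeneous gradient Young measures with barycenter $\bar A$, and then in general by piecewise homogenization on a fine partition followed by gluing of generating sequences with matching affine boundary data. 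The only substantive step you defer rather than carry out is the one you name yourself: the separation must live in the dual of a weighted space of continuous functions of quadratic growth, and one has to verify both that the moment-bounded slice of homogeneous gradient Young measures is convex and weak-$*$ closed (no mass escaping to infinity) and that the infimum over that slice still computes $Qf(\bar A)$ even though minimizing sequences for the relaxation need not respect the moment bound; Kinderlehrer and Pedregal's truncation technique exists precisely to resolve this. Since the skeleton is sound and the deferred point is correctly identified as the technical core, I would not call this a gap so much as an honest reliance on the cited machinery.
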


\subsection{Basic notation}

We define the set of $K$-quasiconformal matrices 
\begin{align}
\label{R_K} \RK&:=\{A\in \mathbb{R}^{2 \times 2}:\ |A|^2 \leq K {\rm det} (A) \}\ ,
\end{align}
for $1\leq K<\infty$. Note that $\RK$ represents (except for the zero-matrix) the possible values of gradients of affine $K$-quasiconformal mappings. Further, we denote the set of matrices in $\R^{2 \times 2}$ with positive determinant as $\R^{2 \times 2}_+$.   

Next, let us introduce a suitable notion of weak convergence on $\mathcal{QC}(\Omega;\R^2)$: 
\begin{definition}
\label{WeakConvQC} \mbox{}
%\begin{itemize}
%\item We say that a sequence $\{u_k\}_{k \in \mathbb{N}} \subset W^{1,2}$ of quasiconformal maps is bounded in $\mathcal{QC}(\Omega: \R^2)$ if they are all $K$-quasiconformal with some $K \geq 1$ and if there exists a constant $C > 0$ such that
%\begin{equation}
%\|u_k\|_{W^{1,2}(\Omega;\mathbb{R}^2)} \leq C \qquad \& \qquad \int_\Omega \frac{1}{\det(\nabla u_k)} \dd x \leq C.
%\label{bound-qc}
%\end{equation}
%\item 
We say that a sequence $\{u_k\}_{k \in \mathbb{N}} \subset W^{1,2}(\Omega;\R^2)$ of quasiconformal maps converges weakly to $u \in W^{1,2}(\Omega; \mathbb{R}^2)$ in $\mathcal{QC}(\Omega;\R^2)$ if $u_k \rightharpoonup u$ in $W^{1,2}(\Omega; \mathbb{R}^2)$, there exists a $K \geq 1$ such that the $u_k$ are all $K$-quasiconformal and $u(x)$ is non-constant. If the weak convergence is given only in $W^{1,2}_\mathrm{loc}(\Omega; \mathbb{R}^2)$, we will speak of local convergence in $\mathcal{QC}(\Omega; \R^2)$. 
%\end{itemize}
\end{definition}

\begin{remark} \mbox{}
\label{rem-convQC}
%\begin{itemize}
%\item  Notice that we defined, for the sake of later convenience,  boundedness and weak convergence in $\mathcal{QC}(\Omega;\R^2)$ slightly inconsistently; in particular, a sequence weakly converging in $\mathcal{QC}(\Omega;\R^2)$ does not need to be bounded in $\mathcal{QC}$. Nevertheless, we will show in Lemma \ref{lem:non-conc} that this can be improved in case all members of the sequence $\{u_k\}_{k \in \mathbb{N}}$ take the same values on $\partial \Omega$ as the weak limit $u$ and $\det(\nabla u)^{-1} \in L^{1}(\Omega)$.
%\item 
Notice that the set $\mathcal{QC}(\Omega;\R^2)$ is closed under the weak convergence in $\mathcal{QC}(\Omega;\R^2)$. Indeed, it follows from Lemma \ref{lem:HighInt} (below) that if a sequence converges in $\mathcal{QC}(\Omega;\R^2)$, it also converges locally uniformly and, thus, the weak limit is either quasiconformal or constant, with the latter possibility however being excluded per definition. { For the fact, that the limit of a locally uniformly converging sequence of K-quasiconformal maps can be only K-quasiconformal or constant we refer to \cite[Thm.2.2]{lehto} or (in a slightly different formulation) to \cite[Thm. 3.1.3]{AstalaIwaniec}.}
%\end{itemize}
\end{remark}
 
\section{Main results}
\label{results}

We shall denote
\begin{align*}
\GradQC = \big\{ \nu &\in \mathcal{GY}^{2}(\Omega; \R^{2 \times 2}) \text { generated by maps weakly converging in $\mathcal{QC}(\Omega; \mathbb{R}^2)$} \big\}.
\end{align*}

The main results of our paper are formulated in Theorems \ref{THM1}, \ref{THM2} and \ref{THM3}, below. In Theorem \ref{THM1}, we give a complete and explicit characterization of $\GradQC$. Further, a relation between Young measures in $\GradQC$ and quasiconformally quasiconvex functions is observed in Theorem \ref{THM2}. From this, we readily deduce an equivalent characterization of weak lower semicontinuity of functionals on $\mathcal{QC}(\Omega; \R^2)$ in Theorem \ref{THM3}.

We start with the characterization of measures in $\GradQC$:
\begin{theorem}\label{THM1}
Let $\Omega \subset \R^2$ be a bounded Lipschitz domain. Let $\nu\in \mathcal{GY}^2(\O;\R^{2\times 2})$. Then $\nu \in \GradQC$ if and only if the following conditions hold:
 \begin{gather}
 \exists K \geq 1 \text{ such that }\, \mathrm{supp} \, \nu_x\subset \RK \text{ for a.a.\@ $x\in \Omega$ and} \label{supp} \\
    \text{ there exists a $K$-quasiconformal $y \in {\mathcal{QC}}(\Omega; \R^2)$ such that }\nabla y(x)=\int_{\R^{2\times 2}} A \d \nu_x(A) \text{.}  \label{firstmoment0} 
    %\exists c \geq 0 \text{ such that } \int_\Omega \int_{\R^{2\times 2}} \frac{1}{\mathrm{det}(A)} \d \nu_x(A) \d x \leq c,
\end{gather}   
\end{theorem}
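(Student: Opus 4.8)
The plan is to prove the two implications separately. The necessity of \eqref{supp} and \eqref{firstmoment0} is the easier direction, while the sufficiency — constructing a generating sequence of quasiconformal maps — is where the real work lies, and this is exactly where the cut-off technique advertised in the introduction enters.

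\emph{Necessity.} Suppose $\nu$ is generated by a sequence $\{y_k\}$ converging weakly in $\mathcal{QC}(\O;\R^2)$, so all $y_k$ are $K$-quasiconformal for a common $K$. Since $\nabla y_k(x) \in \RK$ a.e.\ and $\RK$ is a closed cone, the standard argument (test the Young measure against the distance function to $\RK$, or use that for a.e.\ $x$ the measure $\nu_x$ is a weak-* limit of convex combinations of Dirac masses supported in $\RK$) gives $\mathrm{supp}\,\nu_x \subset \RK$ for a.a.\ $x$, which is \eqref{supp}. For \eqref{firstmoment0}: by the compact embedding and the weak $W^{1,2}$ convergence, $y_k \rightharpoonup y$ in $W^{1,2}$ with $\nabla y(x) = \int_{\R^{2\times 2}} A \,\mathrm{d}\nu_x(A)$ a.e.\ (the first moment of a gradient Young measure is the gradient of the weak limit, by Theorem~\ref{k-p-thm}(i) applied with the identity test map). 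By Remark~\ref{rem-convQC} the limit $y$ is again $K$-quasiconformal (it is non-constant by Definition~\ref{WeakConvQC}), hence $y \in \mathcal{QC}(\O;\R^2)$, giving \eqref{firstmoment0}.

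\emph{Sufficiency.} Now assume \eqref{supp}–\eqref{firstmoment0}. Since $\nu \in \mathcal{GY}^2(\O;\R^{2\times 2})$, Theorem~\ref{k-p-thm} provides \emph{some} generating sequence $\{z_k\} \subset W^{1,2}(\O;\R^2)$, but these maps need not be injective, nor quasiconformal, nor have the right boundary values. The strategy, following \cite{bbmk2013} and \cite{AstalaFaraco}, is threefold. First, a \emph{homogeneous} reduction: using \eqref{supp}, the higher integrability of quasiconformal maps (Lemma~\ref{lem:HighInt}, to be invoked below), and a localization/covering argument, it suffices to generate, on small balls where $\nabla y$ is nearly constant equal to some $A \in \R^{2\times 2}_+$ with $\mathrm{supp}\,\nu_x \subset \RK$, the corresponding \emph{homogeneous} quasiconformal gradient Young measure. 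Here is the crucial point where quasiconformality is more than quasiregularity: the building-block maps we oscillate with must themselves be \emph{homeomorphisms}. We obtain these from the homogeneous case treated by Astala and Faraco \cite{AstalaFaraco}, or by a direct laminate/staircase-laminate construction of quasiconformal maps compatible with $\RK$. Second, the \emph{cut-off}: given an approximate generating map that is $K'$-quasiconformal on the bulk but has wrong boundary behaviour, we must surgically modify it on a boundary layer of vanishingly small measure so that it becomes affine (equal to $Ax$, or equal to $y$) near $\partial\O$, \emph{while remaining a quasiconformal homeomorphism} with $K$ controlled. This is done, as in \cite{bbmk2013}, by first redefining the map on a fine one-dimensional grid and then invoking Beurling–Ahlfors-type extension theorems \cite{BeurlingAhlfors} to fill in quasiconformally; the grid step is substantially harder than in the bi-Lipschitz case and cannot be a mere affine interpolation. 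Third, a \emph{diagonalization/gluing} argument assembles the local pieces into a global sequence $\{y_k\} \subset \mathcal{QC}(\O;\R^2)$ with $y_k \rightharpoonup y$ weakly in $\mathcal{QC}(\O;\R^2)$ generating $\nu$; one checks the generated Young measure is indeed $\nu$ by testing against quasiconvex functions and using the first-moment condition \eqref{firstmoment0} together with Theorem~\ref{k-p-thm}(ii).

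The main obstacle is unquestionably the cut-off under the homeomorphism constraint. Classical mollifier-based cut-offs destroy injectivity, so one is forced into the two-stage grid-plus-extension scheme; controlling the quasiconformality constant of the extension uniformly along the sequence, while the boundary layer shrinks, is the delicate quantitative heart of the argument. A secondary difficulty is ensuring that the local homogeneous building blocks can be taken to be genuine $K$-quasiconformal homeomorphisms realizing any prescribed homogeneous measure supported in $\RK$ with the correct barycentre — this is where we lean on \cite{AstalaFaraco} — and that the gluing across the covering does not blow up $K$. Verifying that the resulting $y_k$ still generate \emph{exactly} $\nu$ (not merely something with the right support and barycentre) requires the full force of the Kinderlehrer–Pedregal characterization in Theorem~\ref{k-p-thm} and a careful bookkeeping of error terms in \eqref{jedna2}.
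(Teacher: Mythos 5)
Your proposal is correct and follows essentially the same route as the paper: necessity via localization of the support in $\RK$ and identification of the first moment with $\nabla y$ (using closedness of $\mathcal{QC}$ under weak convergence), and sufficiency via the Kinderlehrer--Pedregal covering by balls where $\nu$ is nearly homogeneous, the Astala--Faraco generation of homogeneous measures by $K$-quasiconformal maps, the grid-plus-Beurling--Ahlfors cut-off (Proposition \ref{prop-cutOff}), gluing, and diagonalization. The only minor discrepancies are that the paper verifies generation by testing against countable dense families of continuous (not quasiconvex) functions, and that applying the cut-off also requires locally uniform closeness of the \emph{inverses}, which the paper establishes explicitly; neither affects the soundness of your outline.
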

{
\begin{remark}
Let us remark, that the above theorem could be slightly weakened by requiring that $\nu\in \mathcal{GY}^p(\O;\R^{2\times 2})$ with $p > \frac{2K}{K+1}$ and $K$ corresponding to \eqref{supp} and \eqref{firstmoment0}.  This is due to the self-improving property of quasiconformal maps used already in \cite{AstalaFaraco} to prove a version of Theorem \ref{homYM} under the above assumptions. On the other hand, even if $p$ was bigger than 2, the generating sequence that we construct will still be in $\mathcal{QC}(\Omega; \R^2)$ and thus its (in general) best local regularity is given by Lemma \ref{lem:HighInt}.   

In this paper we will stick to the exponent $p=2$ since it is the canonical choice for quasiconformal maps; cf. the definitions in \cite{AstalaIwaniec}.  Moreover, even for maps with only an integrable distortion, which form a natural generalization on quasiconformal maps \cite{Hencl} and could form a natural class of deformations in elasticity, $p$ equal to dimension is the most natural choice. In such a case, the inverse of the deformation will be in $W^{1,2}$ again and thus a symmetry between the deformation and its reversal is recorded.
\end{remark}
}

{
\begin{remark}
It will follow from the proof of Theorem \ref{THM1} that a generating sequence for $\nu\in \mathcal{GY}^2(\O;\R^{2\times 2})$ satisfying \eqref{supp} and \eqref{firstmoment0} can be chosen in such a way that it coincides with $y$ on the boundary of $\Omega$ (cf. also Proposition \ref{prop-cutOff}).

However, the construction used for the proof of Theorem \ref{THM1} does not guarantee that the generating sequence will be also $K$-quasiconformal; in fact, it may just be $\kappa(K)$-quasiconformal with $\kappa(K)$ only depending on $K$, but possibly larger than $K$. This is a drawback that {requires us to use regularizations in order }to prove existence of minimizers for quasiconformally quasiconvex functions (cf. also Remark \ref{ex-minimizers} below). Nevertheless, let us note that this drawback is not introduced by the fact that we study injective generating sequences but it appears already in the characterization of $L^\infty$-gradient Young measures in \cite{k-p1,pedregal}. 
\end{remark}
}

Notice that  similarly to \cite{AstalaFaraco}, we constrained the support of the given Young measure to a suitable set of quasiconformal matrices in Theorem \ref{THM1}. Also, since the set $\mathcal{QC}(\Omega; \mathbb{R}^{2})$ is closed under the weak convergence in $\mathcal{QC}(\Omega; \mathbb{R}^{2})$ the condition on the first moment \eqref{firstmoment0} is natural.

In view of previous results, it might seem surprising that no adaptation of the Jensen inequality is needed. Nevertheless, we will show in Theorem \ref{THM2} that measures in $\mathcal{GY}^\mathcal{QC}(\O;\R^{2\times 2})$ satisfy, in fact, a more restrictive version of the Jensen inequality perfectly fitted to quasiconformal maps. To this end, let us introduce the following generalized notion of quasiconvexity:

\begin{definition}
\label{def:QC-quasiconvexity}
Suppose $v:\R^{2\times 2}_+\to\R\cup\{+\infty\}$ is bounded from below and  Borel measurable. We say that $v$ is { $K$}-quasiconformally quasiconvex on $\R^{2 \times 2}_+$ if 
\begin{equation}
|\Omega| v(A) \leq \int_\Omega v(\nabla \varphi(x))\,\md x\
\label{ineq-QC:convex}
\end{equation}
for all $A \in \R^{2 \times 2}_+$ and all { $K$-quasiconformal} $\varphi \in \mathcal{QC}(\O;\R^2)$ such that $\varphi(x) = Ax$ on $\partial \Omega$. { Further, we call a function quasiconformally quasiconvex if it is $K$-quasiconformally quasiconvex for all $K \geq 1.$}
\end{definition}

\begin{remark}[Relation to other notions of quasiconvexity]
Notice that quasiconformal quasiconvexity is a weaker condition than $W^{1,2}$-quasiconvexity since all quasiconformal functions are by definition in $W^{1,2}(\Omega; \R^2)$, while the opposite is by far not true. In fact, a quasiconformally quasiconvex function can be completely arbitrary on the set of matrices with negative determinant, while in general this is not true for $W^{1,2}$-quasiconvex functions. 

A (in general) even weaker condition than the one from Definition \ref{def:QC-quasiconvexity}, so-called \emph{bi-quasiconvexity}, has been introduced in \cite{bbmk2013}; this notion is based on verifying the Jensen inequality \eqref{ineq-QC:convex} just for \emph{bi-Lipschitz} maps. In order to prove that these two notions are equivalent, one would need to assure density of bi-Lipschitz maps in quasiconformal ones in a suitable strong convergence respecting the growth of the function $v: \R^{2\times 2}\to\R\cup\{+\infty\}$ on the set of matrices with positive determinant. For example, one would seek a result showing that for every $K$-quasiconformal function there exists a sequence of $K$-quasiconformal bi-Lipschitz maps that coincide with the original function on $\partial \Omega$ and approximate the given function strongly in the $W^{1,2}$-norm and their inverse Jacobians converge strongly to the inverse Jacobian of the original function in the $L^1$-norm. However, density results on homeomorphisms started to appear only recently in literature \cite{iwaniec1,daneri-pratelli,mora1,mora2,mora3} and a result of the type mentioned above is currently not available to the authors' knowledge.    
\end{remark}

With this definition we have the following theorems:
\begin{theorem}
\label{THM2}
Any Young measure $\nu \in \GradQC$, { that can be generated by sequence of $K$-quasiconformal maps,} satisfies the following inequality 
\begin{equation}
     v(\nabla u(x))\le\int_{\R^{2\times 2}} v(A) \d \nu_x(A)\ \label{qc0}
 \end{equation}
for all { $\kappa(K)$}-quasiconformally quasiconvex $v$ in $\mathcal{E}{(K)}$ and a.a. $x \in \Omega$;
where
\begin{equation}
\mathcal{E}{(K)}:=\big \{ v:\R^{2\times 2}\to\R\cup\{+\infty\};\ v\in C(\R^{2 \times 2}_+)\ ,\ 0 \leq v(A)  \leq C\big(1+|A|^{ p} + |\mathrm{det}(A)|^{-q}\big) \ \big\}\,
\label{Orho}
\end{equation}
{ for any $p < \frac{2\kappa(K)}{\kappa(K)-1}$ and $q< \frac{1}{\kappa(K)-1}$ where $\kappa(K)$ only depends on $K$ and is found in Proposition \ref{prop-cutOff}.} 
\end{theorem}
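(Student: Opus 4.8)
The plan is to derive the quasiconformal Jensen inequality \eqref{qc0} from the characterization in Theorem \ref{THM1} by exploiting the boundary-matching property of the generating sequence mentioned in the second remark after Theorem \ref{THM1}. Fix $\nu \in \GradQC$ generated by $K$-quasiconformal maps, and let $v$ be $\kappa(K)$-quasiconformally quasiconvex in $\mathcal{E}(K)$. First I would localize: it suffices to prove \eqref{qc0} at almost every Lebesgue point $x_0$ of the maps $x\mapsto\langle\nu_x,v\rangle$ and $x\mapsto\nabla u(x)$, and at $x_0$ with $\det\nabla u(x_0)>0$ (which holds a.e.\ since $\nabla u(x)\in\RK$ for a.e.\ $x$ by \eqref{supp} and $u$ is non-constant quasiconformal, hence $\det\nabla u>0$ a.e.). By a standard blow-up/localization argument (as in \cite{k-p1,k-p2,pedregal}), the restriction of $\nu$ to a small ball around $x_0$, rescaled, is again a gradient Young measure lying in $\GradQC$; more precisely, one extracts a \emph{homogeneous} Young measure $\bar\nu$ supported in $\RK$ with first moment $A_0 := \nabla u(x_0)\in\R^{2\times2}_+$, and by Theorem \ref{THM1} applied on, say, the unit square $Q$, $\bar\nu$ is generated by a sequence $\{\varphi_j\}\subset\mathcal{QC}(Q;\R^2)$ that are all $\kappa(K)$-quasiconformal and — crucially — satisfy $\varphi_j(x)=A_0 x$ on $\partial Q$ (here I invoke Proposition \ref{prop-cutOff}, which both produces the boundary values and introduces the enlarged constant $\kappa(K)$).

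Next I would pass to the limit in the functional. Since each $\varphi_j$ is $\kappa(K)$-quasiconformal with boundary datum $A_0 x$ and $A_0$ has positive determinant, the definition of $\kappa(K)$-quasiconformal quasiconvexity \eqref{ineq-QC:convex} gives
\begin{equation}
|Q|\, v(A_0) \le \int_Q v(\nabla\varphi_j(x))\,\md x \qquad\text{for every } j.
\label{eq:prop-applied}
\end{equation}
It remains to show $\int_Q v(\nabla\varphi_j)\,\md x \to \int_Q\int_{\R^{2\times2}} v(B)\,\d\bar\nu(B)\,\md x = \langle\bar\nu,v\rangle$, i.e.\ that $\{v\circ\nabla\varphi_j\}$ is equiintegrable so that the fundamental Young measure theorem \eqref{jedna2} applies with the test function $v$. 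This is where the growth restriction defining $\mathcal{E}(K)$ enters: because $p<\frac{2\kappa(K)}{\kappa(K)-1}$ and $q<\frac{1}{\kappa(K)-1}$, the self-improving higher integrability of $\kappa(K)$-quasiconformal maps (Lemma \ref{lem:HighInt}) bounds $\nabla\varphi_j$ in $L^{p'}$ for some $p'>p$, and an analogous integrability estimate for the inverse Jacobian $\det(\nabla\varphi_j)^{-1}$ bounds it in $L^{q'}$ for some $q'>q$; together with $0\le v(B)\le C(1+|B|^p+|\det B|^{-q})$ these uniform bounds on strictly higher powers yield equiintegrability of $\{v\circ\nabla\varphi_j\}$ via de la Vallée-Poussin. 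Letting $j\to\infty$ in \eqref{eq:prop-applied} then gives $v(A_0)\le\langle\bar\nu,v\rangle = \int v\,\d\nu_{x_0}$ (the last equality by the homogenization), which is exactly \eqref{qc0} at $x_0$.

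The main obstacle I expect is the localization/homogenization step combined with preserving the boundary values: one must check that the standard blow-up procedure that turns a general gradient Young measure into a homogeneous one is compatible with the invertibility constraint and with Proposition \ref{prop-cutOff}, i.e.\ that the rescaled generating maps can be taken injective and quasiconformal on the fixed domain $Q$ with affine boundary data, without destroying the $\kappa(K)$ bound or the support condition $\supp\bar\nu\subset\RK$. The enlargement $K\rightsquigarrow\kappa(K)$ is precisely the price paid here, and I would be careful that it is applied only once (not iteratively). A secondary technical point is verifying the integrability exponent for the inverse Jacobian in Lemma \ref{lem:HighInt} matches the threshold $q<\frac{1}{\kappa(K)-1}$ in the definition of $\mathcal{E}(K)$; this should follow from Astala's area distortion theorem and the relation between distortion and Jacobian, exactly as exploited in \cite{AstalaFaraco}, but it must be tracked explicitly to get equiintegrability rather than a mere uniform $L^q$ bound.
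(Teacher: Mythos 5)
Your proposal follows essentially the same route as the paper: localize to a homogeneous measure $\nu_{x_0}$ with first moment $\nabla u(x_0)$, use Proposition \ref{prop-cutOff} to impose the affine boundary datum (accepting the enlargement $K\rightsquigarrow\kappa(K)$), establish equiintegrability of $|\nabla\varphi_j|^{p}$ and $(\det\nabla\varphi_j)^{-q}$ exactly as in Lemma \ref{lem:non-conc} (where the affine boundary data also let one extend by the identity so that the higher integrability holds up to $\partial\Omega$), and pass to the limit in the quasiconformal quasiconvexity inequality. The only cosmetic difference is that you invoke Theorem \ref{THM1} to produce the generating sequence for the homogeneous measure, whereas the paper simply localizes the given $K$-quasiconformal generating sequence of $\nu$ (a pre/post-composition with similarities, preserving $K$), which makes it transparent that the cut-off, and hence the passage to $\kappa(K)$, is applied only once.
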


{This theorem readily yields that quasiconformal quasiconvexity implies lower semicontinuity, as shown in Theorem \ref{THM3}. For proving the reverse, i.e. that quasiconformal quasiconvexity is also necessary for lower semicontinuity, we however need to rely on the characterization of $\GradQC$ in Theorem \ref{THM1}.} 

\begin{theorem}
\label{THM3}
Let $v \in \mathcal{E}{(K)}$ and let $\{y_k\}_{k \in \N} \subset \mathcal{QC}(\Omega;\R^2)$ be a sequence of { $K$-quasiconformal} maps that converge weakly in $\mathcal{QC}(\Omega;\R^2)$. Then $y\mapsto I(y):=\int_\O v(\nabla y(x))\,\d x$  is sequentially weakly lower semicontinuous { along any such sequence} if $v$ is { $\kappa(K)$}-quasiconformally quasiconvex. { On the other hand, if $I(y)$ is weakly lower semicontinuous along any sequence of K-quasiconformal maps converging weakly in $\mathcal{QC}(\Omega;\R^2)$ then it is $K$-quasiconformally quasiconvex.}
\end{theorem}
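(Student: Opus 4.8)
\textbf{Proof plan for Theorem \ref{THM3}.}

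The first implication (quasiconformal quasiconvexity $\Rightarrow$ weak lower semicontinuity) is the ``easy'' direction and I would obtain it directly from Theorem \ref{THM2}. Given a sequence $\{y_k\}$ of $K$-quasiconformal maps converging weakly to $y$ in $\mathcal{QC}(\Omega;\R^2)$, pass (for the purpose of proving lower semicontinuity it suffices to argue along an arbitrary subsequence realizing the liminf) to a subsequence generating a Young measure $\nu\in\GradQC$ that is generated by $K$-quasiconformal maps. Since $v\in\mathcal{E}(K)$ and $v$ is $\kappa(K)$-quasiconformally quasiconvex, Theorem \ref{THM2} gives the Jensen-type inequality $v(\nabla y(x))\le\int_{\R^{2\times 2}}v(A)\,\d\nu_x(A)$ for a.e.\ $x\in\Omega$; note $\nabla y(x)=\int A\,\d\nu_x(A)$ is the first moment. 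The growth bound $0\le v(A)\le C(1+|A|^p+|\det A|^{-q})$ for the admissible exponents, together with the higher integrability of $\nabla y_k$ and of $(\det\nabla y_k)^{-1}$ guaranteed by Lemma \ref{lem:HighInt} (uniformly in $k$ for a fixed distortion bound $K$), shows that $\{v(\nabla y_k)\}$ is equiintegrable, hence converges weakly in $L^1$ to the map $x\mapsto\int v(A)\,\d\nu_x(A)$ by the fundamental Young measure theorem. Integrating the Jensen inequality over $\Omega$ then yields $I(y)=\int_\Omega v(\nabla y)\le\lim_k\int_\Omega v(\nabla y_k)=\lim_k I(y_k)$, which is the claim.

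For the reverse implication (weak lower semicontinuity along all such sequences $\Rightarrow$ $K$-quasiconformal quasiconvexity), I would argue by a standard localization/blow-up together with periodic rescaling, but adapted to respect the homeomorphism constraint. Fix $A\in\R^{2\times 2}_+$ and a $K$-quasiconformal $\varphi\in\mathcal{QC}(\Omega;\R^2)$ with $\varphi(x)=Ax$ on $\partial\Omega$; we must show $|\Omega|v(A)\le\int_\Omega v(\nabla\varphi)$. The idea is to build a sequence $\{y_k\}$ of $K$-quasiconformal maps on $\Omega$ that converges weakly in $\mathcal{QC}(\Omega;\R^2)$ to the affine map $x\mapsto Ax$, such that $\nabla y_k$ spends an asymptotic fraction of $\Omega$ close to each value $\nabla\varphi$ dictates — i.e.\ $y_k$ oscillates by pasting in rescaled copies of $\varphi$. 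Concretely, since $\varphi$ agrees with $Ax$ on $\partial\Omega$ and $\varphi$ is a $K$-quasiconformal homeomorphism, the map $\varphi$ can be ``$1$-periodically tiled'' after an affine change of variables: choose a cube (or, after composing with similarities, use $\Omega$ itself and a Vitali covering of a target region by scaled copies $\e_j\Omega+c_j$), define $y_k$ to equal $Ax$ off the union of the small copies and to equal $c_j^{A}+\e_jA\big(\varphi((x-c_j)/\e_j)-A(x-c_j)/\e_j\big)$, wait — more cleanly: on each small copy $\e_j\Omega+c_j$ set $y_k(x):=A c_j + \e_j\,\varphi\!\big((x-c_j)/\e_j\big)$. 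Because $\varphi=Ax$ on $\partial\Omega$, these pieces glue continuously with the affine map $Ax$ on the complement, each piece is $K$-quasiconformal (quasiconformality is invariant under composition with the similarities $x\mapsto x/\e_j$ and the affine rescaling on the image), so $y_k$ is globally a $K$-quasiconformal homeomorphism of $\Omega$; it is nonconstant since $A\neq0$. As the mesh of the covering tends to zero, $y_k\rightharpoonup Ax$ in $W^{1,2}$, and $\nabla y_k(x)=\nabla\varphi((x-c_j)/\e_j)$ on the $j$-th copy, so by a change of variables $\int_\Omega v(\nabla y_k)=\big(\sum_j|\e_j\Omega|/|\Omega|\big)\int_\Omega v(\nabla\varphi)+|\Omega\setminus\bigcup_j(\e_j\Omega+c_j)|\,v(A)$; choosing the covering so that $\sum_j|\e_j\Omega+c_j|\to|\Omega|$ gives $\int_\Omega v(\nabla y_k)\to\int_\Omega v(\nabla\varphi)$, while $I$ applied to the weak limit is $|\Omega|v(A)$. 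Weak lower semicontinuity then forces $|\Omega|v(A)\le\liminf_k I(y_k)=\int_\Omega v(\nabla\varphi)$, as required; and this is exactly $K$-quasiconformal quasiconvexity of $v$.

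\textbf{Main obstacle.} The delicate point is the construction in the converse direction: arranging the rescaled copies of $\varphi$ so that (i) they pack $\Omega$ up to a null fraction, (ii) the glued map is genuinely a \emph{homeomorphism} of $\Omega$ with the right (affine) boundary values and uniformly $K$-quasiconformal, and (iii) the value $v(A)$ contributed on the leftover annular region is harmless, i.e.\ $v(A)<\infty$ (which holds since $A\in\R^{2\times2}_+$ and $v\in C(\R^{2\times2}_+)$) and the leftover measure $\to0$. Packing a Lipschitz domain by affine copies of itself up to measure zero is a routine Vitali-type covering argument, but one must check that composing $\varphi$ with these similarities preserves both injectivity globally (pieces live on disjoint open cells and agree with $Ax$ on cell boundaries, so global injectivity reduces to injectivity of $x\mapsto Ax$, using $\det A>0$) and the distortion bound (quasiconformal distortion is a conformal invariant, hence unchanged by the pre- and post-composition with the relevant similarities — here I use that $A$ need not be conformal, but post-composition with the \emph{affine} $A$ can increase distortion; this is precisely why the clean construction post-composes with $A$ only on the boundary-matching and keeps $\nabla\varphi$ itself as the gradient, so the distortion of $y_k$ equals that of $\varphi$, which is $\le K$). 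Once this bookkeeping is in place, the Young-measure machinery and Lemma \ref{lem:HighInt} supply the integrability needed to pass to the limit, and no adaptation of Jensen's inequality beyond Theorem \ref{THM2} is necessary.
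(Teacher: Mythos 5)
Your forward direction is the paper's argument: pass to a generating Young measure and integrate the Jensen inequality of Theorem \ref{THM2}. One caveat there: your claim that $\{v(\nabla y_k)\}_k$ is equiintegrable does not follow from Lemma \ref{lem:HighInt} alone, since that lemma is purely local (it requires $B_{2r}(x_0)\subset\Omega$) and the non-concentration statement of Lemma \ref{lem:non-conc} is proved only for sequences with affine boundary data; for a general weakly converging $K$-quasiconformal sequence the gradients may concentrate at $\partial\Omega$. This does not damage the conclusion, because $v\ge 0$ and the lower-bound half of the fundamental theorem of Young measures already gives $\int_\Omega\int v\,\d\nu_x\,\d x\le\liminf_k\int_\Omega v(\nabla y_k)\,\d x$ without any equiintegrability -- which is exactly what the paper uses -- but you should drop the claim of equality.

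Your reverse direction is correct but genuinely different from, and in fact more direct than, the paper's. The paper treats the generating sequence of the homogeneous measure associated with $\varphi$ as a black box coming from Theorem \ref{THM1}, must then enforce the affine boundary data via the cut-off of Proposition \ref{prop-cutOff} (which degrades the distortion from $K$ to $\kappa(K)$), and is therefore forced into a localization step: it first proves that lower semicontinuity on $\Omega$ implies lower semicontinuity on scaled subcopies $a+\delta\Omega$ and then on subdomains $\Omega_\varepsilon$ where the adjusted sequence is still $K$-quasiconformal, finally letting $\varepsilon\to0$. Your explicit Vitali tiling $y_k(x)=Ac_j+\e_j\varphi((x-c_j)/\e_j)$ sidesteps all of this: it automatically has trace $Ax$ on $\partial\Omega$, has gradient $\nabla\varphi((x-c_j)/\e_j)$ and hence exactly the distortion of $\varphi$, and glues to a global homeomorphism because the disjoint cells map onto the disjoint sets $A(\e_j\Omega+c_j)$ while the complement maps onto its $A$-image (so one does not even need the simple-connectivity hypothesis of Lemma \ref{lem:Gluing}). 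Two small points worth recording to make this airtight: (i) the background map $x\mapsto Ax$ is itself $K$-quasiconformal, since $|A|^2\le|\Omega|^{-1}\int_\Omega|\nabla\varphi|^2\le K|\Omega|^{-1}\int_\Omega\det\nabla\varphi=K\det A$ by convexity of $|\cdot|^2$ and the null-Lagrangian property of the determinant; and (ii) $v(A)<\infty$ because $\det A>0$ and $v\in C(\R^{2\times 2}_+)$, so the leftover annular contribution indeed vanishes. What your route buys is independence from Theorem \ref{THM1} and Proposition \ref{prop-cutOff} for this implication and no loss from $K$ to $\kappa(K)$; what the paper's route buys is uniformity of method with the rest of Section \ref{Proofs}.
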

{

An easy corollary of the above theorem that any $v \in \bigcap_{K \geq 1} \mathcal{E}(K)$ is weakly lower semicontinuous on $\mathcal{QC}(\Omega;\R^2)$ \emph{if and only if} it is quasiconformally quasiconvex.

\begin{remark}[Exponents in $\mathcal{E}{(K)}$]
\label{rem:exponents}
Let us remark that the exponents $p > 2$, $q > 0$ in $\mathcal{E}(K)$ are perfectly fitted to the $K$-quasiconformal setting. In fact, due to Lemma \ref{lem:HighInt}, we know that every $K$-quasiconformal function is locally in $L^{p}(\Omega; \mathbb{R}^2)$ with $p < \frac{2K}{K-1}$ and this bound is optimal \cite[Chapter 13]{AstalaIwaniec}. The higher intergrability then yields (cf. for example Lemma \ref{lem:non-conc} below) that the inverse Jacobian is integrable with the power $1/(K-1)$ which is again the optimal exponent as the function $x|x|^{K-1}$ shows (see e.g. \cite{koskela} in a more general setting). 
\end{remark}
}

{
\begin{remark}[Further characterization of $\GradQC$]
Notice that a characterization of $\GradQC$ in the spirit of Theorem \ref{k-p-thm} is an easy corollary of Theorems \ref{THM1} and \ref{THM2}: A family of Young measures $\{\nu_x\}_{x \in \Omega}$ is in $\GradQC$ if and only if
\begin{enumerate}
  \item  $\exists K \geq 1 \text{ such that }\, \mathrm{supp} \, \nu_x\subset \RK \text{ for a.a.\@ $x\in \Omega$}$
\item there exists a $K$-quasiconformal $y \in \mathcal{QC}(\Omega; \R^2)$ such that $\nabla y(x)=\int_{\R^{2\times 2}} A \d \nu_x(A)$.
  \item $\psi(\nabla z(x)) \leq \int_{\R^{2\times 2}}\psi(A) \nu_x(\d A) $ for a.e. $x \in \Omega$ and for all $\psi \in \bigcap_{K \geq 1} \mathcal{E}(K)$ quasiconformally quasiconvex, 
  \item $\int_\Omega \int_{\R^{2\times 2}} |A|^2  \nu_x(\d A) \dd x  < \infty$.
\end{enumerate}
In fact, since all quasiconformally quasiconvex functions are also quasiconvex, we deduce that any $\nu$ satisfying the four conditions above is in $\mathcal{GY}^2(\Omega; \mathbb{R}^{2 \times 2})$ and so, according to Theorem \ref{THM1}, also in $\GradQC$. On the other hand, if $\nu \in \GradQC$, we know that it is, in particular, an element of $\mathcal{GY}^2(\Omega; \mathbb{R}^{2 \times 2})$ so that Theorems \ref{k-p-thm} and \ref{THM1} yield the items 1,2 and 4 above while the last item is a consequence of Theorem \ref{THM2}.
\end{remark}
} 
\begin{remark}[Existence of minimizers]
\label{ex-minimizers}
Clearly, a weak lower semicontinuity result is essential in order to prove existence of minimizers for energies with a density from $\mathcal{E}(K)$. Nevertheless, if the set of functions on which the minimization  is performed features some $L^\infty$-type constraint (such as the constraint on the distortion in our case), a coercivity-related issue prevents a simple application of the direct method. In fact, $L^\infty$-type constraints can be enforced by letting the stored energy density be finite only on a suitable subset of $\R^{2 \times 2}$; yet, this subset is usually left when employing cut-off methods---this happens even in the standard cases \cite{dacorogna}. 

{A possible remedy is to add a (presumably small) penalization term that enforces the $L^\infty$-constraint in question. We present a corresponding result in Theorem \ref{minimizers} where we penalize large distortion. Such a penalization term has to be of a non-local character and thus cannot be understood via an energy density; nevertheless, one can argue that it effectively affects the energy only if, at least locally, the distortion has become large which might indicate that the underlying atomic lattice is faulted and the elastic approximation is not appropriate anymore.}

The usual remedy for proving existence of minimizers or, connected to this, relaxation results is to work with $L^p$-type (with $p$ finite) constraints only, as in \cite{Conti,hm} where some particular results for relaxation under determinant constraints were proved. In our setting, this would mean to work with homeomorphisms whose distortion is just integrable. Indeed, such a class of functions has been in the focus of recent research \cite{Hencl} but what is still missing to adapt our technique to such a class is an equivalent to the extension property from Lemma \ref{lemma-extension}.
\end{remark}

{
We now state a theorem about the existence of minimizers:

\begin{theorem}
\label{minimizers}
Suppose that $\Omega \subset \mathbb{R}^2$ is a Lipschitz domain with $\partial \Omega$ being a quasicircle. Let $v \in \bigcap_{K \geq 1} \mathcal{E}(K)$ be quasiconformally quasiconvex such that $v(A) = +\infty$ if $\mathrm{det(A) \leq 0}$ and let it satisfy the coercivity condition
$
v(A) \geq c( -1+|A|^2)
$.
Further, let $\varepsilon > 0$ and set
$$
J(y):= \int_\Omega v(\nabla y) \dd x + \varepsilon \left\|\frac{|\nabla y|^2}{\mathrm{det}(\nabla y)} \right\|_{L^\infty(\Omega)}.
$$
Finally, let $\eta: \partial \Omega \to \mathbb{R}^2$ be quasi-symmetric with $\eta(\partial \Omega)$ a simple closed curve and let $\Gamma \subset \partial \Omega$ be of positive one-dimensional measure. 

Then $J$ possesses a minimizer on the set of maps in $W^{1,2}(\Omega;\R^2)$ that satisfy the Ciarlet-Ne\v{c}as condition  \eqref{Ciarlet-Necas} and that coincide with $\eta$ on $\Gamma$ in the sense of trace. Moreover, the minimizer is found in $\mathcal{QC}(\Omega; \R^2)$.
\end{theorem}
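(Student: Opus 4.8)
The plan is to use the direct method, with the crucial compactness input coming from the a priori bound on the distortion that the penalization term $\varepsilon\,\||\nabla y|^2/\det(\nabla y)\|_{L^\infty(\Omega)}$ enforces along any minimizing sequence. First I would check that the admissible set is non-empty: the map $\eta$ is quasi-symmetric with $\eta(\partial\Omega)$ a simple closed curve, and $\partial\Omega$ is a quasicircle, so by the Beurling--Ahlfors type extension theory (Lemma \ref{lemma-extension}) there is a quasiconformal extension $y_0\in\mathcal{QC}(\Omega;\R^2)$ with $y_0=\eta$ on $\partial\Omega$; being a homeomorphism onto its image it satisfies the Ciarlet--Ne\v{c}as condition \eqref{Ciarlet-Necas}, and since $v\in\bigcap_K\mathcal E(K)$ we have $J(y_0)<+\infty$. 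Hence $\inf J=:m<+\infty$. The coercivity bound $v(A)\ge c(-1+|A|^2)$ gives, for any minimizing sequence $\{y_k\}$, a uniform bound on $\|\nabla y_k\|_{L^2(\Omega;\R^{2\times2})}$; fixing the trace on the set $\Gamma\subset\partial\Omega$ of positive measure and using a Poincaré--type inequality yields a uniform $W^{1,2}(\Omega;\R^2)$ bound, so up to a subsequence $y_k\rightharpoonup y$ in $W^{1,2}(\Omega;\R^2)$.

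The key new ingredient is that the penalty term is uniformly bounded: $\varepsilon\,\||\nabla y_k|^2/\det(\nabla y_k)\|_{L^\infty}\le J(y_k)\le m+1$ for large $k$, so there is a fixed $K\ge1$ with $|\nabla y_k|^2\le K\det(\nabla y_k)$ a.e., i.e. every $y_k$ is $K$-quasiconformal. Thus $\{y_k\}$ is a sequence of $K$-quasiconformal maps converging weakly in $W^{1,2}$; since the weak limit $y$ is non-constant (it has trace $\eta$ on $\Gamma$, which is non-constant), this is exactly weak convergence in $\mathcal{QC}(\Omega;\R^2)$ in the sense of Definition \ref{WeakConvQC}, and by Remark \ref{rem-convQC} the limit $y$ is itself $K$-quasiconformal, in particular $y\in\mathcal{QC}(\Omega;\R^2)$ and $y$ satisfies $\det(\nabla y)>0$ a.e. and the Ciarlet--Ne\v{c}as condition. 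Moreover Lemma \ref{lem:HighInt} and Lemma \ref{lem:non-conc} (the non-concentration of the inverse Jacobians) give that, along $K$-quasiconformal sequences, the $\{|\nabla y_k|^p\}$ and $\{|\det(\nabla y_k)|^{-q}\}$ are equiintegrable for the exponents in $\mathcal E(K)$, so the generated Young measure lies in $\GradQC$. I would then invoke Theorem \ref{THM3}: since $v$ is $\kappa(K)$-quasiconformally quasiconvex — here one uses that $v$ is quasiconformally quasiconvex, hence $\kappa(K)$-quasiconformally quasiconvex for every $K$ — the functional $y\mapsto\int_\Omega v(\nabla y)\dd x$ is sequentially weakly lower semicontinuous along this sequence, so $\int_\Omega v(\nabla y)\dd x\le\liminf_k\int_\Omega v(\nabla y_k)\dd x$.

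It remains to handle the penalty term. The map $A\mapsto |A|^2/\det A$ is lower semicontinuous and quasiconvex on $\R^{2\times2}_+$ (it equals $\sup$ over affine test maps in the same way the distortion inequality passes to limits), so $\||\nabla y|^2/\det(\nabla y)\|_{L^\infty(\Omega)}\le\liminf_k\||\nabla y_k|^2/\det(\nabla y_k)\|_{L^\infty(\Omega)}$; in fact this is immediate from $y$ being $K$-quasiconformal for the same $K$ obtained from the uniform bound, together with the fact that $K$ may be taken as the $\limsup$ of the distortions of the $y_k$. Adding the two lower-semicontinuity estimates gives $J(y)\le\liminf_k J(y_k)=m$, so $y$ is a minimizer, and by construction $y\in\mathcal{QC}(\Omega;\R^2)$, which is the last assertion. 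The main obstacle I anticipate is the lower semicontinuity of the $L^\infty$ penalty term under weak $W^{1,2}$ convergence — one must argue carefully that the essential supremum of the distortion does not drop in the limit, which is where the locally uniform convergence of $K$-quasiconformal maps (Lemma \ref{lem:HighInt}, Remark \ref{rem-convQC}) and the precise relation between the limiting distortion and the sequence's distortions are used; a secondary technical point is verifying that the trace constraint on $\Gamma$ is preserved under weak convergence, which follows from compactness of the trace operator $W^{1,2}(\Omega;\R^2)\to L^2(\partial\Omega;\R^2)$.
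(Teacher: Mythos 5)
Your proposal is correct and follows essentially the same route as the paper: non-emptiness via the Beurling--Ahlfors extension of $\eta$, a uniform distortion bound from the penalty term turning the minimizing sequence into $K$-quasiconformal maps (via Lemma \ref{lemma:C-N}), closedness of $\mathcal{QC}(\Omega;\R^2)$ under weak convergence, Theorem \ref{THM3} for the bulk term, and lower semicontinuity of the $L^\infty$ distortion via the fact that a weak limit of $(Q+\varepsilon)$-quasiconformal maps is again $(Q+\varepsilon)$-quasiconformal. The only cosmetic remark is that your aside about quasiconvexity of $A\mapsto|A|^2/\det A$ is neither needed nor obviously justified -- the argument you give right after it (taking $Q=\limsup$ of the distortions and using Remark \ref{rem-convQC}) is exactly the paper's and suffices on its own.
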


We refer here to Lemma \ref{lemma-locQuasi} for a definition of quasisymmetry and to Lemma \ref{lemma-astala-quasicircle} for a characterization of quasicircles.
}

\bigskip

\section{Some auxiliary results on quasiconformal maps}
\label{Start}

Quasiconformal maps have been studied intensively for several decades now; cf.\@ e.g.\@ the monographs \cite{Ahlfors,AstalaIwaniec} for further details. For the convenience of the reader, let us recall some of their properties that shall be of importance in this work. Note that the classical theory on quasiconformal maps, as presented in e.g.\@ \cite{AstalaIwaniec}, does not treat the class $\mathcal{QC}(\Omega; \R^2)$ but rather the following set
\begin{align}
\mathcal{QC}_\mathrm{loc}(\Omega; \R^2) = \Big\{& y \in W^{1,2}_\mathrm{loc}(\O;\R^2): \text{ $y$ is a homeomorphism and } \exists { K\geq 1} \text{ such that } \nonumber \\ &|\nabla y|^2 \leq K \det(\nabla y) \text{ a.e. in $\Omega$} \Big\}.
\end{align}
which, in this work, shall be referred to as locally quasiconformal maps. Let us also mention that some of the results given below can be extended even to maps the distortion of which is only integrable \cite{Hencl}.

\begin{lemma}[Inverse and composition, adapted from Theorem 3.1.2 in \cite{AstalaIwaniec}]
Let $u: \Omega \to \mathbb{R}^2$ be a K-quasiconformal map. Then its inverse is K-quasiconformal, too. Moreover, the composition of a $K_1$-quasiconformal map and a $K_2$-quasiconformal map is $K_1K_2$-quasiconformal.
\end{lemma}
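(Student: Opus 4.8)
My plan is to reduce both statements to a single linear-algebra fact about the pointwise distortion of orientation-preserving $2\times2$ matrices, and then to transport that fact through the standard differentiation and change-of-variables theory for planar quasiconformal maps. For $A\in\R^{2\times 2}$ with $\det A>0$ set $\mathbb{K}(A):=|A|^2/\det A$, so that (apart from $A=0$) one has $A\in\RK$ exactly when $\mathbb{K}(A)\le K$. First I would record the two relations
\[
\mathbb{K}(A^{-1})=\mathbb{K}(A),\qquad \mathbb{K}(AB)\le\mathbb{K}(A)\,\mathbb{K}(B),
\]
valid for all invertible $A,B$ with positive determinant. The identity follows from the singular value decomposition $A=R_1\,\mathrm{diag}(\sigma_1,\sigma_2)\,R_2$ with $\sigma_1\ge\sigma_2>0$ and $R_1,R_2\in\mathrm{SO}(2)$: then $|A|=\sigma_1$, $\det A=\sigma_1\sigma_2$, $|A^{-1}|=\sigma_2^{-1}$, $\det A^{-1}=(\sigma_1\sigma_2)^{-1}$, so both sides equal $\sigma_1/\sigma_2$. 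The inequality is submultiplicativity of the spectral norm, $|AB|\le|A|\,|B|$, combined with $\det(AB)=\det A\,\det B$.

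Next I would recall the standard structural facts about a $K$-quasiconformal homeomorphism $u$ (see \cite[Chapter~3]{AstalaIwaniec}): $u\in W^{1,2}_{\mathrm{loc}}$, $u$ is differentiable a.e.\@ with $\det\nabla u>0$ a.e., and $u$ satisfies Lusin's conditions $(N)$ and $(N^{-1})$ (images and preimages of null sets are null). For the inverse $g:=u^{-1}$: it is a homeomorphism; at a.e.\@ point $y=u(x)$ at which $u$ is differentiable with $\det\nabla u(x)>0$, differentiating $g\circ u=\mathrm{id}$ shows $g$ is differentiable at $y$ with $\nabla g(y)=(\nabla u(x))^{-1}$. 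Using condition $(N)$ in the area formula then gives the energy bound
\[
\int_{u(\Omega)}|\nabla g(y)|^2\dd y=\int_{\Omega}|\nabla g(u(x))|^2\det\nabla u(x)\dd x=\int_\Omega\mathbb{K}(\nabla u(x))\dd x\le K\,|\Omega|<\infty,
\]
so $g\in W^{1,2}$, and at a.e.\@ $y$ we read off $\mathbb{K}(\nabla g(y))=\mathbb{K}\bigl((\nabla u(x))^{-1}\bigr)=\mathbb{K}(\nabla u(x))\le K$; hence $g$ is $K$-quasiconformal. For the composition $w:=v\circ u$ of a $K_1$- and a $K_2$-quasiconformal map, condition $(N^{-1})$ for $u$ ensures that $v$ is differentiable at $u(x)$ for a.e.\@ $x$, so the chain rule gives $\nabla w(x)=\nabla v(u(x))\,\nabla u(x)$ a.e.; then $\mathbb{K}(\nabla w(x))\le\mathbb{K}(\nabla v(u(x)))\,\mathbb{K}(\nabla u(x))\le K_1K_2$ a.e., and $W^{1,2}_{\mathrm{loc}}$-membership of $w$ is checked by localizing the area-formula estimate to any ball $B$ with $\overline B\subset\Omega$, on which $\int_B|\nabla w|^2\dd x\le K_1K_2\int_{u(B)}\det\nabla v\,\dd y\le K_1K_2\,\bigl|v(u(B))\bigr|<\infty$.

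The main obstacle is not this computation but the measure-theoretic underpinning it relies on: that the exceptional sets stay null under pushforward and pullback by $u$, that the a.e.\@ classical derivatives of $g$ and of $w$ coincide with their distributional derivatives (so the pointwise distortion bounds genuinely upgrade to $W^{1,2}_{\mathrm{loc}}$-regularity and hence to quasiconformality), and, for $g$, that it is absolutely continuous on lines in the first place. These are precisely the contents of the ACL-characterization of quasiconformality, Fuglede's lemma on exceptional line families, and the a.e.\@ differentiability of $W^{1,2}_{\mathrm{loc}}$-homeomorphisms, all developed in \cite[Chapter~3]{AstalaIwaniec}, which I would invoke rather than reprove. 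An alternative is the analytic route: $u$ solves a Beltrami equation $\bar\partial u=\mu\,\partial u$ with $\|\mu\|_{\infty}\le(K-1)/(K+1)$, the Beltrami coefficients of $u^{-1}$ and of $v\circ u$ are given by the classical transformation formulas, and the bounds $K$ and $K_1K_2$ are read off directly; this trades the geometric measure theory above for the $L^p$-theory of the Beurling transform.
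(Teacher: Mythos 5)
The paper offers no proof of this lemma at all: it is imported verbatim as Theorem 3.1.2 of \cite{AstalaIwaniec}, so there is nothing internal to compare against. Judged on its own terms, your proposal is a correct outline of the standard analytic proof. The two matrix identities are right, and it is worth noting that the equality $\mathbb{K}(A^{-1})=\mathbb{K}(A)$ is genuinely a two-dimensional phenomenon (in higher dimensions inversion swaps inner and outer distortion); the submultiplicativity $\mathbb{K}(AB)\le\mathbb{K}(A)\mathbb{K}(B)$ is immediate from $|AB|\le|A|\,|B|$ and multiplicativity of the determinant. Your treatment of the inverse is sound: differentiability of $u^{-1}$ at $u(x)$ with derivative $(\nabla u(x))^{-1}$ does follow at points where $u$ is differentiable with invertible derivative, because the homeomorphism property together with $|u(x)-u(x_0)|\ge c|x-x_0|-o(|x-x_0|)$ forces the needed Lipschitz-type lower bound, and condition $(N)$ keeps the exceptional set null in the image. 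You are also right to single out the real content, namely that the a.e.\@ pointwise derivatives of $u^{-1}$ and of $v\circ u$ coincide with distributional derivatives (ACL characterization, Fuglede's lemma); deferring these to \cite[Chapter~3]{AstalaIwaniec} is entirely appropriate for a result the paper itself treats as a black box. One small point: the paper's class $\mathcal{QC}(\Omega;\R^2)$ in \eqref{qc-def} demands \emph{global} $W^{1,2}$ membership, whereas you only claim $W^{1,2}_{\mathrm{loc}}$ for the composition; your own area-formula estimate already closes this gap, since $\int_\Omega|\nabla(v\circ u)|^2\dd x\le K_1K_2\int_{u(\Omega)}\det\nabla v\dd y=K_1K_2\,|v(u(\Omega))|$, which is finite because $v\in W^{1,2}(u(\Omega);\R^2)$ controls the measure of its image.
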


\begin{lemma}[Higher integrability, adapted from Theorem 13.2.3 in \cite{AstalaIwaniec}\footnotemark] \footnotetext{In fact the second part of the Theorem is obtained by combining Lemma 3.6.1 and Lemma 2.10.9 with the above mentioned theorem.}
\label{lem:HighInt}
Let $u: \Omega \to \mathbb{R}^2$ be a K-quasiconformal map. Then $u \in W^{1,p}_\mathrm{loc}(\Omega; \R^2)$ for all $p < \frac{2K}{K-1}$. More specifically, take $x_0 \in \Omega$ and $r > 0$ such that the ball $B_{2r}(x_0) \subset \Omega$. Then there exists a constant $c$ independent of $r, x_0$ and $K$ such that
$$
\int_{B_r(x_0)} |\nabla u|^{p} \dd x \leq c.
$$

In particular, a sequence $\{u_k\}_{k \in \mathbb{N}}$ that converges weakly to $u$ in $\mathcal{QC}(\Omega; \R^2)$ converges also locally uniformly to $u$.
\end{lemma}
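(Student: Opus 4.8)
\smallskip
\noindent The statement splits into two essentially independent parts, and the plan is to handle them in turn. The quantitative local bound --- the higher integrability proper --- is the content of \cite[Thm.~13.2.3]{AstalaIwaniec} (with the localization supplied by the Lemmas cited in the footnote), so I would quote it rather than reprove it. For orientation, the mechanism is Stoilow factorization: writing $u=h\circ g$ with $g$ the principal solution on $\R^2$ of the same Beltrami equation $\bar\partial g=\mu\,\partial g$ (normalized by $g(z)=z+\mathcal{O}(1/z)$ at infinity) and $h$ holomorphic on $g(\O)$, Astala's area-distortion theorem (proved via holomorphic motions) gives $g\in W^{1,p}_{\rm loc}(\R^2;\R^2)$ for every $p<\tfrac{2K}{K-1}$ with universal bounds, and the chain rule transfers this to $u$ on any ball whose double lies in $\O$ since $h$ is holomorphic, hence locally smooth. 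Two caveats I would flag: first, the displayed a~priori estimate $\int_{B_r(x_0)}|\nabla u|^p\,\dd x\le c$ is only meaningful after a scale-fixing normalization of $u$ (say, by the diameter of $u(B_{2r}(x_0))$), since $u$ and $\lambda u$ are both $K$-quasiconformal --- but this is immaterial for the convergence claim below; second, if one only wants \emph{some} exponent $p>2$ --- which is all this paper actually needs --- then Astala's sharp theorem can be replaced by the classical reverse-H\"older argument (distortion inequality $|\nabla u|^2\le K\det\nabla u$ together with a Caccioppoli/Sobolev--Poincar\'e estimate, followed by Gehring's lemma), which gives $\nabla u\in L^{2+\varepsilon}_{\rm loc}$ with $\varepsilon=\varepsilon(K)>0$ and a local estimate $\bigl(\tfrac{1}{|B_r|}\int_{B_r}|\nabla u|^{2+\varepsilon}\bigr)^{\frac1{2+\varepsilon}}\le c(K)\bigl(\tfrac{1}{|B_{2r}|}\int_{B_{2r}}|\nabla u|^{2}\bigr)^{\frac12}$.

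\smallskip
\noindent For the ``in particular'' clause, suppose $u_k\wto u$ in $\mathcal{QC}(\O;\R^2)$: then the $u_k$ are $K$-quasiconformal for one common $K$ and $u_k\wto u$ in $W^{1,2}(\O;\R^2)$, so $M:=\sup_k\norm{u_k}{W^{1,2}(\O)}<\infty$. Fix a compact set $E$ with $E\subset\O$ and cover it by finitely many balls $B_{r_i}(x_i)$ whose doubles lie in $\O$. Fixing $p\in\bigl(2,\tfrac{2K}{K-1}\bigr)$ and applying the first part (or the reverse-H\"older estimate above) on each double ball, and absorbing the right-hand sides into $M$, gives $\sup_k\int_{B_{r_i}(x_i)}|\nabla u_k|^p\,\dd x<\infty$; combined with the $L^2$-bound on the $u_k$ and Sobolev--Poincar\'e, $\{u_k\}$ is bounded in $W^{1,p}(E;\R^2)$. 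Since $p>2$, Morrey's embedding $W^{1,p}(E;\R^2)\hookrightarrow C^{0,1-2/p}(E;\R^2)$ makes $\{u_k|_E\}$ bounded and equi-H\"older-continuous, so by Arzel\`a--Ascoli some subsequence converges uniformly on $E$; the limit must be $u$, since uniform convergence on $E$ forces $L^2(E)$-convergence while $u_k\wto u$ in $L^2$. Hence every subsequence of $\{u_k\}$ has a further subsequence converging uniformly on $E$ to $u$, so the whole sequence does; exhausting $\O$ by such $E$ yields local uniform convergence.

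\smallskip
\noindent The only genuinely deep input is the \emph{sharp} exponent $\tfrac{2K}{K-1}$, which is Astala's theorem and which I would invoke as a black box; everything else --- the localization, the bookkeeping of constants, and the entire convergence corollary --- is soft, and as noted the corollary goes through with any $p>2$ and hence does not even require the sharp version.
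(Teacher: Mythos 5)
Your proposal is correct. The paper itself offers no proof of this lemma---it is an adapted citation of \cite[Thm.~13.2.3]{AstalaIwaniec} together with the localization lemmas named in the footnote---so invoking Astala's sharp higher-integrability theorem as a black box is exactly what the authors do, and your reverse-H\"older/Gehring remark rightly observes that the sharp exponent is never actually needed downstream. Your proof of the ``in particular'' clause (uniform local $L^p$ gradient bounds with $p>2$, Morrey embedding, Arzel\`a--Ascoli, identification of the limit via weak $L^2$ convergence, and the subsequence principle) is the standard argument the paper leaves implicit, and it is carried out correctly; your caveat that the displayed estimate with $c$ independent of $u$ only makes sense after a scale-fixing normalization is also a legitimate observation about the statement as written, since $\lambda u$ is $K$-quasiconformal for every $\lambda>0$.
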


It is shown in \cite{Gehring} that gradients of quasiconformal maps have locally a better integrability than just $L^2$, the precise bound is derived in \cite{Astala}. 

\begin{lemma}[Local quasisymmetry, adapted from Theorem 3.6.2 in \cite{AstalaIwaniec}] \label{lemma-locQuasi}
Let $u: \Omega \to \mathbb{R}^2$ be a K-quasiconformal map; further, take $x_0 \in \Omega$ and $r > 0$ such that the ball $B_{2r}(x_0) \subset \Omega$. Then $u_{|B_r(x_0)}$ is \emph{quasisymmetric}, i.e. it is a homeomorphism and there exists an increasing function $\eta: \mathbb{R}_+ \to \mathbb{R}_+$ such that for any triple $x,y,z \in B_r(x_0)$ the following is satisfied
$$
\frac{|u(x)-u(y)|}{|u(x)-u(z)|} \leq \eta \left(\frac{|x-y|}{|x-z|} \right)
$$
and the function $\eta$ depends only on $K$ but not on $x_0, r$.
\end{lemma}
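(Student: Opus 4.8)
The plan is to derive the statement from the classical fact, recorded as Theorem 3.6.2 in \cite{AstalaIwaniec}, that a quasiconformal map is quasisymmetric on compact subsets of a disk with a modulus controlled by $K$; the remaining work is just a rescaling together with a check that the modulus $\eta$ can be taken to depend on $K$ alone.

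First I would set up the rescaling. Given $x_0\in\Omega$ and $r>0$ with $B_{2r}(x_0)\subset\Omega$, let $\phi(w):=x_0+2rw$; this similarity maps the unit disk $\mathbb{D}$ onto $B_{2r}(x_0)$ and the concentric disk $\tfrac12\mathbb{D}$ onto $B_r(x_0)$. Set $\tilde u:=u\circ\phi$ (subtracting the constant $\tilde u(0)$ as well, should the cited theorem call for a normalization --- this affects neither the distortion nor any ratio of distances below). Since the distortion inequality $|\nabla u|^2\le K\det(\nabla u)$ from \eqref{qc-def} is invariant under pre-composition with a similarity --- which merely multiplies $\nabla u$ by the scalar $2r$ --- the map $\tilde u$ is $K$-quasiconformal on $\mathbb{D}$, and, being the composition of the homeomorphism $u|_{B_{2r}(x_0)}$ with $\phi$, it is a homeomorphism of $\mathbb{D}$ onto its image.

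Next I would invoke Theorem 3.6.2 of \cite{AstalaIwaniec} for $\tilde u$ on the subdomain $\overline{\tfrac12\mathbb{D}}\subset\mathbb{D}$: it furnishes an increasing $\eta_K:\mathbb{R}_+\to\mathbb{R}_+$ depending only on $K$ (and on the fixed ratio $1/2$, hence on nothing else) such that
$$
\frac{|\tilde u(w_1)-\tilde u(w_2)|}{|\tilde u(w_1)-\tilde u(w_3)|}\ \le\ \eta_K\!\left(\frac{|w_1-w_2|}{|w_1-w_3|}\right)\qquad\text{for all } w_1,w_2,w_3\in\tfrac12\mathbb{D}.
$$
Then, for $x,y,z\in B_r(x_0)$, writing $x=\phi(w_1)$, $y=\phi(w_2)$, $z=\phi(w_3)$ with $w_i\in\tfrac12\mathbb{D}$ and using that a similarity preserves every ratio of distances, I get $|x-y|/|x-z|=|w_1-w_2|/|w_1-w_3|$ and $|u(x)-u(y)|/|u(x)-u(z)|=|\tilde u(w_1)-\tilde u(w_2)|/|\tilde u(w_1)-\tilde u(w_3)|$, so the asserted inequality holds with the same $\eta_K$. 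Finally, $u|_{B_r(x_0)}$ is a homeomorphism onto its image because $u$ is.

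No step is genuinely hard; the one point to be careful about is uniformity. The cited theorem has to be applied on a subdomain whose position inside $\mathbb{D}$ is fixed once and for all --- here $\tfrac12\mathbb{D}$ --- so that its quantitative constants cannot covertly depend on $x_0$ or $r$; the factor $2$ in the hypothesis $B_{2r}(x_0)\subset\Omega$ is exactly what supplies this fixed geometric ratio. One should also record that the geometric notion of $K$-quasiconformality behind Theorem 3.6.2 agrees with the analytic definition used here, which is classical (cf. \cite[Ch.~3]{AstalaIwaniec}).
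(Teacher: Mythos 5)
Your proposal is correct and matches what the paper intends: the lemma is stated as an adaptation of Theorem 3.6.2 in \cite{AstalaIwaniec} with no further proof given, and the adaptation consists precisely of the similarity rescaling you carry out, using that pre-composition with a similarity preserves both the distortion bound and all ratios of distances, so that the modulus $\eta$ obtained on the fixed configuration $\tfrac12\mathbb{D}\subset\mathbb{D}$ depends only on $K$ and not on $x_0$ or $r$. Your closing remark about applying the cited theorem to a subdomain in fixed relative position is exactly the right point to flag for uniformity.
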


The following is a direct consequence of the famous extension of Beurling and Ahlfors \cite{BeurlingAhlfors}: 
\begin{lemma}[Extension property] \label{lemma-extension}
Suppose that $D$ is a square in $\R^2$ and $u: \partial D \to u(\partial D)$ is $\eta$-quasisymmetric and that $u(\partial D)$ is a simple closed curve. Then there exists a $K$-quasiconformal map $\tilde{u}: D \to \mathrm{Int} (u(\partial D))$ which coincides with $u$ on $\partial \Omega$. Moreover, $K$ depends only on $\eta$.
\end{lemma}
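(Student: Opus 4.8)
The statement to prove is Lemma~\ref{lemma-extension}, the extension property.

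\textbf{Approach.} The plan is to reduce the claim to the classical Beurling--Ahlfors extension theorem, which provides a quasiconformal extension of a quasisymmetric self-map of the real line to the upper half-plane, combined with the Riemann mapping theorem and the fact that conformal maps between Jordan domains extend to homeomorphisms of the closures (Carath\'eodory). The key point is that quasiconformality is conformally invariant, so we may transplant the extension problem from the square $D$ and the Jordan domain $\mathrm{Int}(u(\partial D))$ to a fixed reference configuration—say the unit disk, or the upper half-plane—where the Beurling--Ahlfors result applies directly.

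\textbf{Key steps.} First I would fix a conformal (biholomorphic) map $\phi$ from $D$ onto the unit disk $\mathbb{D}$; since $\partial D$ is a smooth-except-at-corners Jordan curve (a quasicircle), $\phi$ extends to a homeomorphism $\partial D \to \partial \mathbb{D}$, and in fact this boundary map is quasisymmetric as a map between these curves. Second, by the Riemann mapping theorem there is a conformal map $\psi$ from the interior Jordan domain $\mathrm{Int}(u(\partial D))$ onto $\mathbb{D}$; by Carath\'eodory's theorem $\psi$ extends to a homeomorphism of the closures. Third, I consider the composed boundary homeomorphism $h := \psi \circ u \circ \phi^{-1} \colon \partial \mathbb{D} \to \partial \mathbb{D}$. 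The content of the reduction is that $h$ is quasisymmetric (as a circle homeomorphism) with parameters controlled by $\eta$ and by the universal distortion introduced by $\phi$; here one uses that composition of quasisymmetric maps is quasisymmetric and that $\phi$, being conformal, contributes a distortion depending only on the geometry of the fixed square $D$. Fourth, I invoke the Beurling--Ahlfors theorem (in its disk version) to obtain a $K_0$-quasiconformal extension $H \colon \mathbb{D} \to \mathbb{D}$ of $h$, with $K_0$ depending only on the quasisymmetry constant of $h$, hence only on $\eta$. Finally, I set $\tilde u := \psi^{-1} \circ H \circ \phi$. Since $\phi$ and $\psi^{-1}$ are conformal ($1$-quasiconformal) and $H$ is $K_0$-quasiconformal, the composition $\tilde u$ is $K_0$-quasiconformal by the composition rule (Lemma~1.\ref{lemma-extension} precedes; one uses $K_1K_2$-quasiconformality of compositions). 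It maps $D$ onto $\mathrm{Int}(u(\partial D))$, and on $\partial D$ it satisfies $\tilde u = \psi^{-1}\circ h\circ\phi = \psi^{-1}\circ\psi\circ u = u$, as required; $K = K_0$ depends only on $\eta$.

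\textbf{Main obstacle.} The delicate point is the precise statement that the boundary correspondence $h$ on the circle is quasisymmetric with a constant controlled solely by $\eta$—i.e., that passing to the reference disk does not lose quantitative control. This requires knowing that the conformal map $\phi$ of the fixed square onto the disk has a quasisymmetric boundary extension (true because a square boundary is a quasicircle, indeed an Ahlfors-regular chord-arc curve), and that quasisymmetry of circle/line homeomorphisms is stable under composition with uniform control of constants. All of this is standard in the quasiconformal literature (e.g.\ \cite{Ahlfors,AstalaIwaniec,BeurlingAhlfors}), so the proof is essentially a bookkeeping of how the constants propagate; no genuinely new estimate is needed. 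One should also note the mild subtlety that the classical Beurling--Ahlfors statement is usually phrased for the upper half-plane and $\mathbb{R}$, and transferring it to $\mathbb{D}$ and $\partial\mathbb{D}$ again only costs a universal constant coming from a M\"obius change of coordinates.
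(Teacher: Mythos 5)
Your proposal is correct and follows essentially the same route as the paper's proof: conformal transplantation of both $D$ and $\mathrm{Int}(u(\partial D))$ to a reference domain (you use the disk, the paper the upper half-plane --- a cosmetic difference), quasisymmetry of the composed boundary correspondence, and the Beurling--Ahlfors extension. The only point where the paper is more explicit than you are is in justifying the quantitative quasisymmetry of the boundary values of $\psi$: Carath\'eodory alone gives only a homeomorphism, and one needs that $u(\partial D)$ is a quasicircle with constants depending only on $\eta$ (being the $\eta$-quasisymmetric image of the square's boundary) together with the Ahlfors reflection principle, which is exactly the ``delicate point'' you flag but do not fully discharge.
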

{
Notice that due to the Jordan mapping theorem $u(\partial D))$ has a well defined interior (which we denote my $\mathrm{Int}(u(\partial D))$ as well as an exterior $\mathrm{Ext}(u(\partial D))$.

\begin{proof}(of Lemma \ref{lemma-extension})
First, owing to the Riemann mapping theorem, we find the conformal maps $\phi_1: D \to \mathbb{H}$ and $\phi_2: \mathrm{Int} (u(\partial D)) \to \mathbb{H}$ with $\mathbb{H}$ being the upper half-plane. Since $D$ as well as $u(\partial D)$ are quasicircles, it follows by the reflection principle of Ahlfors \cite{Ahlfors-refl} (cf. also \cite[Lemma 2]{reed}) that $\phi_1$, $\phi_2$ can be extended quasiconformally to mappings of the whole plane and therefore, as quasiconformal mappings of the whole plane are quasi-symmetric \cite[Thm. 3.5.3]{AstalaIwaniec}, extend also to quasisymmetric mappings $\bar{\phi}_1: \partial D \to \partial \mathbb{H}$ and $\bar{\phi}_2: u(\partial D) \to \partial \mathbb{H}$, the quasi-symmetry modulus of which depends only on $\eta$ (in fact for $\bar{\phi}_1$ it is even independent of $\eta$ and completely determined by the fact that $D$ is a square). Since inversions and compositions of quasi-symmetric maps are also quasi-symmetric, $\bar{\phi}_2 \circ  u \circ\bar{\phi}_1^{-1}$ is a $\tilde{\eta}$-quasi-symmetric mapping from the $x$-axis to the $x$-axis with $\tilde{\eta}$ depending only on $\eta$. 

Thus, relying on the Beurling-Ahlfors extension \cite{BeurlingAhlfors}, we may extend this map to a locally $K$-quasiconformal mapping $\mathfrak{b}: \mathbb{H} \to \mathbb{H}$ with $K$ depending only on $\tilde{\eta}$ and hence $\eta$. By setting $\tilde{u} =   \phi_2^{-1}\circ \mathfrak{b} \circ \phi_1$ we obtain $\tilde{u} \in \mathcal{QC}_\mathrm{loc}(D;\mathrm{Int} (u(\partial D)))$, which however is actually in $W^{1,2}(D; \mathbb{R}^2)$ due to \eqref{Ciarlet-Necas}, since $\mathrm{Int} \,u(\partial D)$ is bounded.
\end{proof}

In fact, the extension property still holds if we replace the square $D$ in Lemma \ref{lemma-extension} by a \emph{quasicircle}, that is the \emph{image of a circle under a quasisymmetric homeomorphism}. We will use the fact that quasicircles can be characterized by a kind of reverse triangle inequality: 

\begin{lemma}[Characterization of quasicircles, adapted from Theorem 13.3.1 in \cite{AstalaIwaniec}] \label{lemma-astala-quasicircle}
Let $\mathcal{C}$ be a closed curve in $\mathbb{R}^2$. Then $\mathcal{C}$ is the image of a circle under an $\eta$ - quasisymmetric homeomorphism if and only if there exists a constant $c$ which depends only on $\eta$ such that for any two points $z_1$ and $z_2$ chosen on the given closed curve and $z_3$ lying on the shorter of the resulting arcs, we have 
\begin{equation}
|z_1-z_3| + |z_2-z_3| \le c |z_1-z_2|.
\label{Arc-condition}
\end{equation}
\end{lemma}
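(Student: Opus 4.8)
This is the classical \emph{Ahlfors three-point condition} characterising quasicircles, and the plan is simply to quote \cite[Theorem 13.3.1]{AstalaIwaniec} (see also \cite{Ahlfors}); for completeness I indicate the two directions and where the quantitative dependence of $c$ and $\eta$ on one another enters.

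For the \emph{necessity} of \eqref{Arc-condition} I would start from an $\eta$-quasisymmetric parametrisation $f\colon S^1\to\mathcal{C}$, which exists by the very definition of a quasicircle, and argue directly. Writing $w_1,w_2\in S^1$ for the preimages of $z_1,z_2$ and letting $I$ be the shorter of the two arcs of $S^1$ they bound, one checks elementarily that every point $w\in I$ has $|w-w_1|\le|w_1-w_2|$; the quasisymmetry inequality then gives $|f(w)-z_1|\le\eta(1)\,|z_1-z_2|$, so the whole arc $f(I)$ lies in the ball of radius $\eta(1)|z_1-z_2|$ about $z_1$. Consequently, whichever of the two subarcs of $\mathcal{C}$ has the smaller diameter has diameter at most $2\eta(1)|z_1-z_2|$, and any $z_3$ on it lies within that diameter of both $z_1$ and $z_2$, which yields \eqref{Arc-condition} with $c=4\eta(1)$.

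For the \emph{sufficiency} --- the genuinely hard direction --- I would use conformal welding. Let $\Omega_+,\Omega_-$ be the bounded and unbounded components of $\mathbb{R}^2\setminus\mathcal{C}$, pick Riemann maps $\phi_+\colon\mathbb{D}\to\Omega_+$ and $\phi_-\colon\mathbb{R}^2\setminus\overline{\mathbb{D}}\to\Omega_-$, and use Carath\'{e}odory's theorem to extend them to homeomorphisms of the closures, so that $h:=\phi_+^{-1}\circ\phi_-|_{S^1}\colon S^1\to S^1$ is a homeomorphism. The key step is to deduce from \eqref{Arc-condition} that $h$ is quasisymmetric with modulus depending only on $c$; this is done by estimating, with the help of \eqref{Arc-condition}, the conformal moduli (extremal lengths) of curve families separating prescribed pairs of complementary subarcs of $\mathcal{C}$, and transporting these estimates back to $\mathbb{D}$ and to its complement via $\phi_\pm$. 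Granting that $h$ is quasisymmetric, the Beurling--Ahlfors extension \cite{BeurlingAhlfors} (cf.\@ Lemma \ref{lemma-extension}) provides a $K(c)$-quasiconformal self-map $H$ of $\mathbb{D}$ with $H|_{S^1}=h$; gluing $\phi_+\circ H$ on $\overline{\mathbb{D}}$ to $\phi_-$ on the exterior --- they match on $S^1$ by the definition of $h$ --- gives a quasiconformal homeomorphism $F$ of $\mathbb{R}^2$ with $F(S^1)=\mathcal{C}$, so that $F|_{S^1}$ is the desired quasisymmetric parametrisation, with modulus controlled by $c$.

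I expect the main obstacle to be precisely that middle step of the sufficiency part, namely proving that the welding homeomorphism $h$ is quasisymmetric with a bound depending \emph{only} on $c$: this is where the extremal-length/modulus estimates characteristic of quasicircle theory are indispensable, and it is exactly what \cite[Theorem 13.3.1]{AstalaIwaniec} supplies, so I would invoke that result rather than reprove it.
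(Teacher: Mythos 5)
Your proposal matches the paper exactly in substance: the lemma is stated there without proof, as a direct citation of \cite[Theorem 13.3.1]{AstalaIwaniec}, which is precisely what you propose to do. Your additional sketch of the elementary necessity direction and of the conformal-welding argument for sufficiency (correctly isolating the quasisymmetry of the welding map as the step one must take from the reference) is accurate but goes beyond what the paper itself records.
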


A natural generalization of quasiconformal maps are \emph{quasiregular} maps; i.e., those that are of bounded distortion but not necessarily homeomorphisms. Let us point out through the following lemma that one of the possibilities to assure that such maps are injective is by imposing the Ciarlet-Ne\v{c}as condition \cite{ciarlet-necas} that is well known in elasticity.

\begin{lemma}[Quasiregularity and Ciarlet-Ne\v{c}as condition]
\label{lemma:C-N}
A map $u \in W^{1,2}(\Omega; \mathbb{R}^2)$ is $K$-quasiconformal if and only if it is non-constant, K-quasiregular and satisfies the Ciarlet-Ne\v{c}as condition
\begin{equation}
\int_\Omega \mathrm{det}(\nabla u) \dd x \leq |u(\Omega)|.
\label{Ciarlet-Necas}
\end{equation}
\end{lemma}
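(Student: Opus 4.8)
The direction ``$\Rightarrow$'' is essentially free. A $K$-quasiconformal map is, being a homeomorphism of a domain, non-constant, and it is $K$-quasiregular by definition (quasiregularity is quasiconformality with the homeomorphism requirement removed). Moreover quasiconformal maps satisfy Lusin's condition (N), so the area formula gives $\int_\O\det\nabla u\,\md x=\int_{\R^2}N(u,\O,y)\,\md y=|u(\O)|$, using that the multiplicity $N(u,\O,\cdot)$ equals $1$ on $u(\O)$ by injectivity and $0$ elsewhere; in particular \eqref{Ciarlet-Necas} holds, with equality. Hence the real content is the converse, where the plan is to recover injectivity of $u$ purely from the inequality \eqref{Ciarlet-Necas} by a multiplicity count.

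For ``$\Leftarrow$'', I would first invoke the basic structure theory of planar quasiregular maps: a non-constant $K$-quasiregular map is open and discrete and satisfies condition (N) (Reshetnyak; in the plane this also follows from the Stoilow factorization $u=h\circ\psi$ with $\psi$ $K$-quasiconformal and $h$ non-constant holomorphic, since both factors are open, discrete and satisfy (N)). Condition (N) makes the area formula available, so
\[
\int_\O\det\nabla u\,\md x=\int_{\R^2}N(u,\O,y)\,\md y=\int_{u(\O)}N(u,\O,y)\,\md y\ \ge\ |u(\O)|,
\]
where the last inequality uses that $u(\O)$ is open and $N(u,\O,y)\ge1$ for every $y\in u(\O)$. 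Combining this with the assumed \eqref{Ciarlet-Necas} forces equality throughout, hence $N(u,\O,y)=1$ for a.e.\@ $y\in u(\O)$.

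It then remains to upgrade ``$N=1$ a.e.'' to genuine injectivity, and here openness of $u$ does the job: if $u(x_1)=u(x_2)=:y_0$ with $x_1\ne x_2$, pick disjoint balls $B_1\ni x_1$, $B_2\ni x_2$ in $\O$; then $u(B_1)$ and $u(B_2)$ are open neighbourhoods of $y_0$, so $u(B_1)\cap u(B_2)$ is a nonempty open set, hence of positive measure, on which $N(u,\O,\cdot)\ge2$ --- contradicting $N=1$ a.e. Thus $u$ is injective, and a continuous open injection on a domain is a homeomorphism onto the open set $u(\O)$; together with $u\in W^{1,2}(\O;\R^2)$ and $|\nabla u|^2\le K\det\nabla u$ a.e.\@ this is precisely $K$-quasiconformality.

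The one delicate point --- and the step I would cite rather than reprove --- is the validity of the area formula for $K$-quasiregular Sobolev maps, i.e.\@ that such maps satisfy condition (N) and $\int_\O\det\nabla u=\int_{\R^2}N(u,\O,\cdot)$; this is available from Reshetnyak's theory, or can be reduced to the quasiconformal and holomorphic cases via Stoilow. Everything else is elementary point-set topology and a one-line inequality.
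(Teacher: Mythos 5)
Your proof is correct and follows essentially the same route as the paper's: openness, discreteness and Lusin's condition (N) for quasiregular maps, the area formula to translate \eqref{Ciarlet-Necas} into $N(u,\Omega,\cdot)=1$ a.e.\@ on $u(\Omega)$, and the disjoint-balls/open-images argument to upgrade this to genuine injectivity. The only (harmless) additions are your explicit mention of the Stoilow factorization as a source for the structure theory and the final remark that a continuous open injection is a homeomorphism onto its image.
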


\begin{proof}
Clearly, in order to be a homeomorphism $u$ cannot be constant. Moreover, since quasiregular maps are continuous (or more specifically have a continuous representative), open (that is map open sets to open sets) and discrete (the set of pre-images for any point does not accumulate) (cf. e.g.\@ \cite[Corollary 5.5.2]{AstalaIwaniec},\cite{Hencl}), we only have to prove that the additional condition \eqref{Ciarlet-Necas} guarantees (and is implied by) injectivity. 

The proof of this follows from the \emph{area formula}. Namely, as both quasiregular and quasiconformal maps satisfy the Lusin $N$-condition (i.e. map sets of zero measure to maps of zero measure) (cf.\@ e.g.\@ \cite{Hencl}), we have that 
$$
\int_\Omega \mathrm{det}(\nabla u) \dd x = \int_{\mathbb{R}^2} N(u,\Omega, y) \dd y = \int_{u(\Omega)} N(u,\Omega, y) \dd y
$$
where $N(u,\Omega, y)$ is defined as the number of pre-images of $y$ in $\Omega$. So the Ciarlet-Ne\v{c}as condition is satisfied if and only if $N(u,\Omega,y) = 1$ almost everywhere on $u(\Omega)$. Also we can immediately see that the reverse inequality to (\ref{Ciarlet-Necas}) always holds.

If $u$ is injective, then $N(u,\Omega,y)=1$ and \eqref{Ciarlet-Necas} is satisfied. For the converse, suppose by contradiction, that there is a non-injective quasiregular, non-constant $u$ satisfying \eqref{Ciarlet-Necas}. Then there has to exist a $y \in u(\Omega)$ that has at least to two pre-images $x_1$ and $x_2$. Now there exists an $\eps > 0$ such that $B_\eps(x_1) \cap B_\eps(x_2) = \emptyset$ and $B_\eps(x_j) \subset \Omega$ for $j=1,2$. On the other hand, for the images we have that $u(B_\eps(x_1)) \cap u(B_\eps(x_2)) \neq \emptyset$. In fact, $u(B_\eps(x_1)) \cap u(B_\eps(x_2))$ is of positive measure since both $u(B_\eps(x_1))$ and $u(B_\eps(x_2))$ are open. Therefore, there exists a set of positive measure where $N(u,\Omega, y)$ is at least two; a contradiction to \eqref{Ciarlet-Necas}.
\end{proof}

\begin{lemma}[Gluing of quasiconformal maps] 
\label{lem:Gluing}
Let $\{\Omega_i\}_{i \in \mathbb{N}}$ be mutually disjoint simply connected Lipschitz domains that almost cover $\Omega$, i.e.\@ $\Omega = \bigcup_{i \in \mathbb{N}} \Omega_i \cup \mathcal{N}$ with $|\mathcal{N}|=0$. Further, let $u_i: \Omega_i \to \mathbb{R}^2$ be K-quasiconformal maps satisfying $u_i(x) = u(x)$ on $\partial \Omega_i$ with $u: \Omega \to \mathbb{R}^2$ also K-quasiconformal. Then the ``glued map''
$$
\tilde{u}(x) = \begin{cases}
u_i(x) & \text{if $x \in \Omega_i$,}\\
u(x) & \text{else,}
\end{cases}
$$
is K-quasiconformal as well.
\end{lemma}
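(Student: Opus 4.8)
The plan is to verify that $\tilde u$ lies in $\mathcal{QC}(\Omega;\R^2)$ by checking the three defining properties in \eqref{qc-def}: that $\tilde u \in W^{1,2}(\Omega;\R^2)$, that it is a homeomorphism, and that $|\nabla\tilde u|^2 \le K\det(\nabla\tilde u)$ a.e. First I would establish the Sobolev regularity and the pointwise distortion bound. Since each $u_i$ agrees with $u$ on $\partial\Omega_i$ in the sense of trace, and $u\in W^{1,2}(\Omega;\R^2)$, the function $\tilde u$ has no jump across the interfaces $\partial\Omega_i$; a standard gluing argument for Sobolev functions (matching traces across Lipschitz interfaces, using that $\{\Omega_i\}$ covers $\Omega$ up to a null set) shows $\tilde u\in W^{1,2}_{\rm loc}$ with $\nabla\tilde u = \nabla u_i$ on $\Omega_i$ and $\nabla\tilde u=\nabla u$ on $\mathcal N$ (the latter being irrelevant, having measure zero). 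To get genuine $W^{1,2}(\Omega;\R^2)$ one uses $\int_\Omega|\nabla\tilde u|^2 = \sum_i\int_{\Omega_i}|\nabla u_i|^2 \le K\sum_i\int_{\Omega_i}\det(\nabla u_i) = K\sum_i|u_i(\Omega_i)|$; since $u_i(\Omega_i)=\mathrm{Int}(u(\partial\Omega_i))$ and, $u$ being a homeomorphism, these interiors are the mutually disjoint sets $u(\Omega_i)$ whose union is contained in the bounded set $u(\Omega)$, the sum is finite. The distortion inequality holds a.e. on each $\Omega_i$ because each $u_i$ is $K$-quasiconformal, hence a.e. on $\Omega$.

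Next I would argue that $\tilde u$ is a homeomorphism. The key point is that $\tilde u(\Omega_i)=u_i(\Omega_i)=\mathrm{Int}(u(\partial\Omega_i))=u(\Omega_i)$, so on each piece $\tilde u$ is a homeomorphism onto $u(\Omega_i)$, and these image sets are mutually disjoint (as $u$ is injective) and also disjoint from $u(\partial\Omega_i)=\tilde u(\partial\Omega_i)$. Thus $\tilde u$ is injective: two points in the same $\Omega_i$ are separated by injectivity of $u_i$; two points in different $\Omega_i,\Omega_j$ have images in the disjoint sets $u(\Omega_i), u(\Omega_j)$; boundary points are handled by continuity and the matching of traces with $u$. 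Continuity of $\tilde u$ across the interfaces follows from the trace agreement together with $\tilde u\in W^{1,2}$ and local quasiregularity (one can invoke that a $W^{1,2}$ map which is quasiregular on each $\Omega_i$ and continuous across the shared boundaries is quasiregular on $\Omega$, hence continuous; alternatively cite Lemma \ref{lemma:C-N}). Actually, the cleanest route is to apply Lemma \ref{lemma:C-N}: $\tilde u$ is non-constant and satisfies the Ciarlet-Ne\v cas condition since $\int_\Omega\det(\nabla\tilde u) = \sum_i|u_i(\Omega_i)| = \sum_i|u(\Omega_i)| = |\tilde u(\Omega)|$ by disjointness, so it suffices to check that $\tilde u$ is $K$-quasiregular, i.e. $\tilde u\in W^{1,2}$ with $|\nabla\tilde u|^2\le K\det\nabla\tilde u$ a.e. — which we have already verified — provided we also know $\tilde u$ is continuous and that the gluing does not create concentrations of the distributional Jacobian on the interfaces.

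The main obstacle I expect is precisely this last subtlety: showing that the \emph{distributional} derivative of $\tilde u$ has no singular part supported on the interfaces $\bigcup_i\partial\Omega_i$, so that $\nabla\tilde u$ is genuinely the $L^2$ function patched together from the $\nabla u_i$ and $\tilde u$ is weakly differentiable on all of $\Omega$ (not just away from the interfaces). This is where the trace-matching condition $u_i|_{\partial\Omega_i}=u|_{\partial\Omega_i}$ does the work: across each Lipschitz interface the one-sided traces of $\tilde u$ coincide, so the jump term in the integration-by-parts formula vanishes. One should phrase this carefully using the Lipschitz regularity of the $\partial\Omega_i$ and the fact that $W^{1,2}$ functions have well-defined traces there; since only countably many interfaces are involved and each is $\mathcal H^1$-$\sigma$-finite, the exceptional set is $\mathcal L^2$-null and contributes nothing. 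Once weak differentiability on $\Omega$ is secured, the remaining items (finiteness of the energy, distortion bound, Ciarlet-Ne\v cas) are as above, and Lemma \ref{lemma:C-N} closes the argument, giving $\tilde u\in\mathcal{QC}(\Omega;\R^2)$ with the same constant $K$.
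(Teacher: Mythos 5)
Your proposal is correct and follows essentially the same route as the paper: establish that $\tilde u$ is $K$-quasiregular, identify $\tilde u(\Omega_i)=u_i(\Omega_i)=u(\Omega_i)$ via the Jordan curve theorem, and then deduce injectivity from the Ciarlet--Ne\v{c}as characterization of Lemma \ref{lemma:C-N} using disjointness of the $u(\Omega_i)$ and Lusin's N-condition. The extra care you take with the Sobolev gluing across the Lipschitz interfaces (no singular part of the distributional derivative) fills in a step the paper dismisses with ``clearly'', but it is not a different argument.
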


In order to prove the ``gluing lemma'' we will exploit the characterization by the Ciarlet-Ne\v{c}as condition from Lemma \ref{lemma:C-N}. Alternatively, it is known that an open and discrete mapping equal to a homeomorphism near the boundary is already injective \cite{Hencl}, which would allow us to show the lemma, too.

\begin{proof}
Clearly $\tilde{u}$ is non-constant and $K$-quasiregular. To see that it is also $K$-quasiconformal, we verify \eqref{Ciarlet-Necas}. But since $\mathrm{det}(\cdot)$ is a null-Lagrangian or, alternatively, by applying the area formula, we have that 
$$
\int_{\Omega_i} \det(\nabla \tilde{u}) \dd x = \int_{\Omega_i} \det(\nabla u) \dd x,
$$
because from construction $\tilde{u}(x) = u(x)$ on $\partial \Omega_i$. This implies that $\tilde{u}(\Omega_i) = u_i(\Omega_i) = u(\Omega_i)$, since each simply connected $u_i(\Omega_i)$ (recall that $\Omega_i$ is simply connected) is according to the Jordan curve-theorem completely determined by its boundary curve.

Moreover, since $u$ is injective, the $u(\Omega_i)$ are mutually disjoint and since $u$ satisfies Lusin's N-condition $\bigcup_{i \in \mathbb{N}} u(\Omega_i)$ has full measure in $u(\Omega)$. Now, as $u$ fulfills \eqref{Ciarlet-Necas}, the claim follows.
\end{proof}

Finally let us mention that homogeneous gradient Young measures with support in quasiconformal matrices can be generated by quasiconformal maps:

\begin{theorem}[adapted from Theorem 1.5 in \cite{AstalaFaraco}]
\label{homYM}
Let $\nu$ be a homogeneous $W^{1,2}$-gradient Young measure with support contained in $\RK$. Then $\nu$ can be generated by a sequence of gradients of (uniformly) $K$-quasiconformal homeomorphisms $\{y_k\}_{k \in \mathbb{N}} \subset \mathcal{QC}(B_R(x_0); \mathbb{R}^{2 \times 2})$ for any $x_0 \in \R^2$ and $R > 0$.
\end{theorem}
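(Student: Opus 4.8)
The plan is to reduce to a single disc and then construct the generating sequence explicitly. Since $K$-quasiconformality is preserved under pre- and post-composition with similarities, and the gradient Young measure generated by a sequence is unchanged under the affine change of variables $x\mapsto x_0+Rx$, it suffices to prove the statement on the unit disc $B:=B_1(0)$; the general ball follows by transport. Next I would locate the barycenter $\bar A:=\int_{\R^{2\times 2}}A\,\d\nu(A)$. Applying the Jensen inequality of Theorem~\ref{k-p-thm} to $\psi(A):=|A|^2-K\det A$ --- which is polyconvex, being the sum of the convex function $A\mapsto|A|^2$ and the null Lagrangian $A\mapsto-K\det A$, hence quasiconvex and of quadratic growth --- and using $\psi\le0$ on $\mathrm{supp}\,\nu\subset\RK$ gives $|\bar A|^2\le K\det\bar A$; since every $A\in\RK$ has $\det A\ge|A|^2/K\ge0$ and $\det$ is a null Lagrangian, one also gets $\det\bar A=\int\det A\,\d\nu\ge0$, with $\bar A=0$ only in the degenerate case $\nu=\delta_0$ (which cannot arise from homeomorphisms, so we assume $\det\bar A>0$). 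Hence $\bar A\in\RK$, and the natural weak limit of the generating sequence is the non-constant linear map $x\mapsto\bar Ax$.

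The core step is to produce, for each $\varepsilon>0$, a $K$-quasiconformal homeomorphism $y_\varepsilon\in\mathcal{QC}(B;\R^2)$ with $y_\varepsilon(x)=\bar Ax$ on $\partial B$ whose gradient distribution is $\varepsilon$-close to $\nu$ tested against a fixed countable dense family of continuous functions; a diagonal argument over $\varepsilon\to0$ then yields a uniformly $K$-quasiconformal sequence $\{y_k\}\subset\mathcal{QC}(B;\R^2)$ generating $\nu$ (the uniform $W^{1,2}$ bound is automatic since $|\nabla y_\varepsilon|^2\le K\det\nabla y_\varepsilon$ and the determinant integrates to $|B|\det\bar A$ by the null-Lagrangian property). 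To build $y_\varepsilon$ I would follow the analysis of \cite{AstalaFaraco}: first approximate $\nu$ weakly-$*$ by a laminate $\nu_\varepsilon=\sum_i\lambda_i\delta_{A_i}$ with all $A_i\in\RK$, produced by a finite iterated lamination tree whose nodes have positive determinant. Then realize $\nu_\varepsilon$ along the tree by an \emph{injective} map: at each splitting $C=\mu C_1+(1-\mu)C_2$ with $C_1-C_2$ rank one and $\det C_1,\det C_2>0$, subdivide the relevant subregion by a fine grid of small squares, and on a concentric sub-square of each (of relative measure $1-\delta$) insert a fine simple laminate between $C_1$ and $C_2$. Such a simple laminate is automatically \emph{injective}: the rank-one connection forces layering in the direction $n$ with $C_1-C_2=a\otimes n$, and then a short computation shows that along any line in direction $n$ the image curve has velocity in a fixed open half-plane --- this is exactly where positivity of $\det C_1$ and $\det C_2$ enters --- and that the map cannot identify two distinct such lines either; moreover its distortion equals pointwise that of $C_1$ or $C_2$. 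On the remaining collar of each square I would use the extension Lemma~\ref{lemma-extension} to restore the affine datum $x\mapsto Cx$ on the boundary of the square, and then paste all pieces together by the gluing Lemma~\ref{lem:Gluing}, which preserves quasiconformality because the pieces match on the square boundaries. Letting the mesh become fine and $\delta\to0$, the gradient distribution of the resulting map converges to $\nu_\varepsilon$; iterating over the finitely many levels of the tree and then sending $\varepsilon\to0$ gives the claim.

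The hard part --- and the reason the argument succeeds \emph{only} in the homogeneous case, in contrast to Theorem~\ref{THM1} --- is the distortion bookkeeping. The intermediate nodes $C_1,C_2$ of the lamination tree need not lie in $\RK$, so the simple laminates and, more importantly, the Beurling--Ahlfors-type extensions on the small squares a priori only yield distortion $\le\kappa(K)$ for some $\kappa(K)>K$. Arranging the entire construction so that the final map is genuinely $K$-quasiconformal --- rather than merely $\kappa(K)$-quasiconformal --- is precisely the content of \cite{AstalaFaraco} and relies on the finer planar theory (Astala's optimal area-distortion estimate and the Stoilow factorization); I would invoke that result rather than reprove it. Finally, transporting the construction from $B_1(0)$ to an arbitrary $B_R(x_0)$ via the affine reduction of the first step completes the proof.
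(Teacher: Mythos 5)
Your proof is, at its core, the same as the paper's: the paper proves Theorem \ref{homYM} simply by citing Theorem 1.5 of \cite{AstalaFaraco} (stated there for the unit ball) and transporting to an arbitrary $B_R(x_0)$ by a similarity, which is exactly your first and last steps, while your intermediate sketch of the lamination/extension machinery explicitly defers the decisive distortion control back to that same reference. So the proposal is correct and follows essentially the paper's route.
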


Let us remark that this theorem is formulated in \cite{AstalaFaraco} in the following way: There exists a sequence of quasiconformal mappings $F_k : \mathbb{R}^2 \to \mathbb{R}^2$ such that the restriction of their gradients to the unit ball generates $\nu$. By a linear transformation of variables, we see that the gradients can be restricted to a ball of any radius and by translation the midpoint of the ball is arbitrary as well.

\section{Proofs of the main theorems}\label{Proofs}

\mbox{}

\begin{proof}(of Theorem \ref{THM1} -- characterization of quasiconformal gradient Young measures)

For the necessity, take a sequence $\{y_k\}_{k \in \mathbb{N}}$ of (uniformly) $K$-quasiconformal mappings converging weakly to $y(x)$ in $\mathcal{QC}(\Omega; \R^2)$. Clearly, $\{y_k\}_{k \in \mathbb{N}}$ generates a family of gradient Young measures $\nu_x \in \mathcal{GY}^2(\Omega; \mathbb{R}^{2 \times 2}$). Moreover,  $\nu_x$ is supported on the set $\bigcap_{l=1}^\infty\overline{\{\nabla y_k(x);\ k\ge l\}}$  (cf.\@ \cite{balder,valadier}) for almost all $x\in\O$; i.e., $\nu_x$ is supported on $\R^{2 \times 2}_K$.  Finally, the equality \eqref{firstmoment0} follows from the fundamental theorem of Young measures (cf.\@ e.g.\@ \cite[Theorem 6.2]{pedregal}). 

As for the sufficiency, we rely on a technique of partitioning the domain $\Omega$, that is routinely used in the analysis of gradient Young measures (cf.\@ \cite[Proof of Theorem~6.1]{k-p1}), on the result from \cite{AstalaFaraco} formulated in Theorem \ref{homYM} and importantly, on our novel cut-off technique that is presented in Section \ref{sect-cutOff}. 

Take $\nu \in \mathcal{GY}^2(\Omega; \R^{2 \times 2})$ and $y \in \mathcal{QC}(\Omega; \R^2)$ according to \eqref{supp} and \eqref{firstmoment0}. We aim to construct a sequence $\{y_k\}_{k\in\N}\subset \mathcal{QC}(\Omega; \R^2)$ converging weakly in $\mathcal{QC}(\Omega; \R^2)$ to $y(x)$, satisfying
\begin{equation}
\lim_{k\to\infty} \int_\O v(\nabla y_k(x))g(x)\,\md
x=\int_{\O} \int_{\R^{2\times
2}}v(s)\nu_x(\md s)g(x)\,\md x\  
\label{seek}
\end{equation}
for
all $g\in \Gamma$ and any $v\in S$, where
$\Gamma$ and $S$ are countable dense subsets of $C(\overline{\O})$ and
$C(\mathbb{R}^{2 \times 2}_+)$, respectively. In fact, we may fix $g$ and $v$ for the moment and once the generating sequence is found, rely on a diagonalization argument. 

We shall proceed, roughly, as follows: We cover $\Omega$ by small balls $B_{\varepsilon_{ik}}(a_{ik})$, the exact type of covering is given by an approximation of the integral on the right hand side of \eqref{seek} by suitable ``Riemann-sums'' in \eqref{79}. On each of these small balls the Young measure is roughly homogeneous, i.e. $\nu= \nu_{a_{ik}}$, with $\nabla y(a_{ik})$ being its first moment. For such a measure we may find a quasiconformal generating sequence due to Lemma \ref{homYM}. The idea is now to patch all these generating sequences defined on the small balls to obtain the final generating sequence. However, in order for the patched function to be really quasiconformal, we need to assure that all the generating sequences have the same boundary data; in fact, we will set them to be $y(x)$ on the boundary. To achieve this, we rely on Proposition \ref{prop-cutOff} but as a prerequisite we need the generating sequences (together with their inverses) to be locally uniformly close to $y(x)$. To assure this we rely on two ingredients: First, we know that the generating sequences converge weakly in $\mathcal{QC}(\Omega; \R^2)$, and thus are locally uniformly close, to $\nabla y(a_{ik})x$ (the same is true for the inverse). Second, since $y$ is differentiable, $y(a_{ik})+ \nabla y(a_{ik})x$ is uniformly close to $y(x)$ on the $B_{\varepsilon_{ik}}(a_{ik})$ (and similarly also for the inverse). 

Let us give the details of the proof: Since $y \in \mathcal{QC}(\Omega; \R^2)$, we know from the Gehring-Lehto theorem (cf.\@ \cite[Section 3.3]{AstalaIwaniec}) that it is differentiable in $\O$ outside a set of measure zero called $N$; in addition, we may assume that $\nabla y$ is finite outside of $N$. Also, $y^{-1}: y(\Omega) \to \Omega$ is differentiable almost everywhere and we may without loss of generality assume that the images of all points where the inverse of $y$ is not differentiable lie in $N$ because $y^{-1}$ maps null sets to null sets.  
Therefore, we find for every $a\in\O\setminus N$ and every $k > 0$ numbers $r_k(a)>0$ such that for any $0<\varepsilon < r_k(a)$ we have
\be\label{derivative1}
\left| \frac{y(x)-y(a)}{\varepsilon}-\nabla y(a)\left(\frac{x-a}{\varepsilon}\right)\right| \le \frac{1}{k}\  \qquad \forall x \in B_\varepsilon(a)
\ee
and also
\be\label{derivative2}
\left|\frac{y^{-1}(z)-y^{-1}(y(a))}{\varepsilon}-(\nabla y(a))^{-1}\left(\frac{z-y(a)}{\varepsilon}\right)\right| \le \frac{1}{k}\  \qquad \forall z \in \nabla y(a) B_{\eps}(0)+y(a)
\ee
Notice that in the second inequality we used that $\nabla y^{-1}(y(a)) = (\nabla y(a))^{-1}$.

Now, we perform the above announced ``suitable partitioning'' of the domain $\Omega$ by relying on \cite[Lemma~7.9]{pedregal}. Following this lemma, we can find $a_{ik}\in\O\setminus N$, $\varepsilon_{ik}\le  r_k(a_{ik})$ such that for all $v\in S$ and all $g\in \Gamma$
\be \label{79} \lim_{k\to\infty}\sum_i { \int_{B_{\varepsilon_{ik}}(a_{ik})}\overline{
V}(a_{ik})g(a_{ik}) \d x}= \int_\O \overline{
V}(x)g(x)\,\md x\ ,\ee
where
$$\overline{
V}(x):=\int_{\R^{2 \times 2}} v(s)\nu_x(\md s)\ .$$

It is well known (see e.g.\@ \cite[Proposition~8.18]{pedregal}), that $\nu_{a_{ik}}$ is a homogeneous $W^{1,2}$-gradient Young measure with $\nabla y(a_{ik})$ being its first moment; due to \eqref{supp}, we may assume that $\nu_{a_{ik}}$ is supported on $\RK$ and because the 
Jacobian of a quasiconformal mapping is strictly positive a.e.\@ (cf.\@ \cite[Section 3.7]{AstalaIwaniec}), we may also assume that $\det \nabla y(a_{ik}) > 0 $. Thus, in view of Theorem \ref{homYM}, this measure can be generated by gradients of a sequence of $K$-quasiconformal maps denoted $\{y^{ik}_j\}_{j\in\N}$. In other words we have that

\be\label{imp14} 
\lim_{j\to\infty} \int_{B_1(0)} v(\nabla y_j^{ik}(x))g(x)\,\md x=\overline{V}(a_{ik})\int_{B_1(0)} g(x)\,\md x\ 
\ee
and, in addition, $\{y^{ik}_j\}_{j \in \mathbb{N}}$ converges weakly in $\mathcal{QC}(B_1(0); \R^2)$ to the map $x \mapsto \nabla y(a_{ik})x$ for $j\to\infty$. In view of Lemma \ref{lem:HighInt} we know that $\{y^{ik}_j\}_{j \in \mathbb{N}}$ converges also locally uniformly to $x \mapsto \nabla y(a_{ik})x$. Moreover, $\{[y^{ik}_j]^{-1}\}_{j \in \mathbb{N}}$ is also a sequence of $K$-quasiconformal maps { and thus, due to Lemma \ref{lem:HighInt},} converges locally uniformly to some map $w(z)$. It is easy to identify $w(z) = (\nabla y(a_{ik}))^{-1}z$.  Indeed, take some arbitrary $x \in B_1(0)$ and $j$ large enough so that for some $\delta > 0$ we have $y^{ik}_j(x) \in B_\delta (\nabla y(a_{ik}) x)$ and  $B_{2\delta} (\nabla y(a_{ik}) x)\subset y^{ik}_j(B_1(0))$ for all such $j$. Then, $[y^{ik}_j]^{-1}(z)$ converges uniformly to $w(z)$ on $B_\delta (\nabla y(a_{ik}) x)$ and so { $z= y^{ik}_j ([y^{ik}_j]^{-1}(z)) \to \nabla y(a_{ik})(w(z))$}, in other words $w(z) =  (\nabla y(a_{ik}))^{-1}(z)$.  Thus, we may, owing to Proposition \ref{prop-cutOff}, assume that $y^{ik}_j(x) = \nabla y(a_{ik})(x)$ on $\partial B_1(0)$.

Further, consider for $k\in\mathbb{N}$, the rescaled functions $y_k$ defined through 
$$
y_k(x): = 
 y(a_{ik})+\varepsilon_{ik}y^{ik}_j\left(\frac{x-a_{ik}}{\varepsilon_{ik}}\right)  \qquad \forall x \in B_{\varepsilon_{ik}}(a_{ik})
$$
where $j=j(i,k)$ will be chosen later. Note that the above formula defines  $y_k$  almost everywhere in $\O$. Note also that the sequence $\{y_k\}_{k \in \mathbb{N}}$ is {$\kappa(K)$-quasiconformal (the quasiconformality constant might have changed due to Proposition \ref{prop-cutOff})} on each ball $B_{\varepsilon_{ik}}(a_{ik})$ and each function maps this ball to $\nabla y(a_{ik}) B_{\eps_{ik}}(0)+y(a_{ik})$ (due to the fixed boundary data).

We now show that on any compact subset of $B_{\varepsilon_{ik}}(a_{ik})$ the function $y_k$ is uniformly close to $y$ and the same holds for the inverses for $k$ large enough. Take some $x_0 \in B_1(0)$ and a radius $R$, such that $B_{2R}(x_0) \subset B_1(0)$; then we have that
\begin{align*}
\|y(x)&-y_k(x)\|_{L^\infty(B_{\varepsilon_{ik} R} (a_{ik}+\varepsilon_{ik}x_0); \R^2)} = \left\|y(x)-y(a_{ik})-\varepsilon_{ik}y^{ik}_j\left(\frac{x-a_{ik}}{\varepsilon_{ik}}\right) \right \|_{L^\infty(B_{\varepsilon_{ik} R} (a_{ik}+\varepsilon_{ik}x_0); \R^2)}  \\ &\leq \left\|y(x)-y(a_{ik})-\varepsilon_{ik}\nabla y(a_{ik})\left(\frac{x-a_{ik}}{\varepsilon_{ik}}\right)\right\|_{L^\infty(B_{\varepsilon_{ik} R} (a_{ik}+\varepsilon_{ik}x_0); \R^2)} \nonumber\\
&+\varepsilon_{ik}\left\|\nabla y(a_{ik})\left(\frac{x-a_{ik}}{\varepsilon_{ik}}\right)-y^{ik}_j\left(\frac{x-a_{ik}}{\varepsilon_{ik}}\right)\right\|_{L^\infty(B_{\varepsilon_{ik} R} (a_{ik}+\varepsilon_{ik}x_0); \R^2)} \leq \frac{2 \varepsilon_{ik}}{k} 
\end{align*}
if $j$ is large enough compared to $k$ and $i$ (at this point we choose $j$\footnote{{ Notice that $j$ may also depend on the chosen radius $R$, since we have only a locally uniform convergence of  $\{y^{ik}_j\}_{j \in \mathbb{N}}$ to $\nabla y(a_{ik})x$, but this is all what is needed for Proposition \ref{prop-cutOff}}}.), so that we can rely on the locally uniform convergence of  $\{y^{ik}_j\}_{j \in \mathbb{N}}$ to $\nabla y(a_{ik})x$ for $j\to\infty$ due to Lemma \ref{lem:HighInt}. 
Notice that by precomposing with a similarity mapping (which does not change the $K$-quasiconformality), this means that
$$
\|y(a_{ik}+\varepsilon_{ik}x)-y_k(a_{ik}+\varepsilon_{ik}x)\|_{L^\infty(B_R(x_0); \R^2)} \leq \frac{2}{k};
$$
i.e. the two maps are locally uniformly close to each other on the unit ball. For convenience, let us denote $\tilde{y}(x) = y(a_{ik}+\varepsilon_{ik}x)$ and $\tilde{y}_k(x) = y_k(a_{ik}+\varepsilon_{ik}x)$; computing the inverse maps gives
\begin{align*}
\tilde{y}^{-1}(z) = \frac{y^{-1}(z)-a_{ik}}{\varepsilon_{ik}}, \qquad \quad
\tilde{y}_k^{-1}(z)  = [y^{ik}_j]^{-1}\left(\frac{z-y(a_{ik})}{\varepsilon_{ik}}\right).
\end{align*}
Then for any point $z_0$ and any $\tilde{R} > 0$ such that $B_{2\tilde{R}}(z_0) \subset (\tilde{y}(B_1(0)) \cap  \tilde{y}_k(B_1(0))) \subset \nabla y(a_{ik}) B_{\eps_{ik}}(0)+y(a_{ik})$, we have that 
\begin{align*}
\|\tilde{y}^{-1}(z)&-\tilde{y}_k^{-1}(z)\|_{L^\infty(B_{\tilde{R}}(z_0); \R^2)} = \left\| \frac{y^{-1}(z)-a_{ik}}{\varepsilon_{ik}} - [y^{ik}_j]^{-1}\left(\frac{z-y(a_{ik})}{\varepsilon_{ik}}\right)\right\|_{L^\infty(B_{\tilde{R}}(z_0); \R^2)} \\& \leq \left\| [y^{ik}_j]^{-1}\left(\frac{z-y(a_{ik})}{\varepsilon_{ik}}\right) -   (\nabla y(a_{ik}))^{-1}\left(\frac{z-y(a_{ik})}{\varepsilon_{ik}}\right) \right\|_{L^\infty(B_{\tilde{R}}(z_0); \R^2)} \\&+ \left\| \frac{y^{-1}(z)-a_{ik}}{ \varepsilon_{ik}} - (\nabla y(a_{ik}))^{-1}\left(\frac{z-y(a_{ik})}{\varepsilon_{ik}}\right)\right\|_{L^\infty(B_{\tilde{R}}(z_0); \R^2)} \leq \frac{2}{k}
\end{align*}
due to \eqref{derivative2} and by enlarging $j$ if necessary. 

This puts us again into the situation of Proposition \ref{prop-cutOff}, so that we can modify $\tilde{y}_k$ to have the same trace as  $\tilde{y}$ on the boundary of  $B_1(0)$. By pre-composing again with the similarity $\frac{x-a_{ik}}{\varepsilon_{ik}}$ we thus obtain a modification of $y_k$ that has the same boundary values as $y(x)$ on $B_{\varepsilon_{ik}}(a_{ik})$. Let us call this modification $\bar{y}_k$. Finally let us set
$$
u_k(x) = \begin{cases} \bar{y}_k(x) &\text{if $x \in B_{\varepsilon_{ik}}(a_{ik})$}, \\
y(x) &\text{else}
\end{cases}
$$
and note that by the gluing Lemma \ref{lem:Gluing} $u_k$ is a sequence of {$\bar{K}$-\emph{quasiconformal} maps (again the quasiconformality constant might have changed due to Proposition \ref{prop-cutOff})} (i.e.\@ in particular \emph{homeomorphisms}). 

To see that $\{u_k\}$ generates $\nu_x$, we proceed in the same way as in \cite[Proof of Th.~6.1]{k-p1}. Indeed, 
by a diagonalization argument (relying on the fact that $\Gamma$ and $S$ are countable), we enlarge $j=j(i,k)$ if necessary so that \be\label{line} \left|\varepsilon_{ik}^n\int_{B_1(0)}
g(a_{ik}+\varepsilon_{ik}y)v(\nabla y_j^{ik}(y))\,\md y -\bar
V(a_{ik})\int_{B_{\varepsilon_{ik}}(a_{ik})}g(x)\,\md
x\right|\le\frac{1}{2^ik}\ .\ee
for all $(g,v) \in \Gamma \times S$. By summing and in view of \eqref{79} and \eqref{line}  we get that
$$
\lim_{k\to \infty} \int_\O g(x)v(\nabla y_k(x))\,\md x=\int_\O\int_{\R^{2\times 2}} v(s)\nu_x(\md s)g(x)\, \md x\ .$$
Hence, we can pick a sub-sequence of $\{\nabla y_k\}_k$ generating $\nu$. The measure $\nu$ is also generated by $\{\nabla u_k\}_k$ because the difference of both sequences vanishes in measure.
\end{proof}

In order to show Theorems \ref{THM2} and \ref{THM3}, we first need to establish that sequences that converge weakly in $\mathcal{QC}(\Omega; \R^2)$ to affine maps and have affine boundary data are actually non-concentrating. This is content of the following Lemma:

\begin{lemma}[Non-concentration of a sequence of quasiconformal maps]
\label{lem:non-conc}
Let $A \in \R^{2 \times 2}_+$, $b \in \R^2$ and $\{y_k\}_{k \in \mathbb{N}} \subset \mathcal{QC}(\Omega; \R^2)$ be a sequence { $\kappa(K)$-quasiconformal maps} weakly converging in $\mathcal{QC}(\Omega; \mathbb{R}^2)$ to the quasiconformal map $x \mapsto A x+b$ with $y_k(x) = Ax+b$ on $\partial \Omega$ for all $k\in\N$. Then, at least for a non-relabelled subsequence, $\{|\nabla y_k|^{ p}\}_{k \in \mathbb{N}}$ as well as $\{(\mathrm{det} \nabla y_k)^{- q}\}_{k \in \mathbb{N}}$ { for any $p < \frac{2\kappa(K)}{\kappa(K)-1}$ and $q< \frac{1}{\kappa(K)-1}$} are weakly converging in $L^1(\Omega)$.
\end{lemma}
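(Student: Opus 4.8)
\textbf{Proof plan for Lemma \ref{lem:non-conc}.}

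The plan is to obtain uniform bounds on $\{|\nabla y_k|^p\}_k$ and $\{(\det \nabla y_k)^{-q}\}_k$ in some $L^r(\Omega)$ with $r>1$; by reflexivity of $L^r$ and the de la Vall\'ee--Poussin criterion this yields equi-integrability and hence, up to a subsequence, weak convergence in $L^1(\Omega)$. Since the $y_k$ are $\kappa(K)$-quasiconformal on the fixed domain $\Omega$ with affine boundary data $Ax+b$, the natural tool is the \emph{global} higher integrability of quasiconformal maps with prescribed boundary values. Concretely, I would first extend each $y_k$ to a $\kappa(K)$-quasiconformal map on a slightly larger domain $\Omega'\Supset\Omega$ by gluing (Lemma \ref{lem:Gluing}) the affine map $x\mapsto Ax+b$ on $\Omega'\setminus\Omega$ to $y_k$ on $\Omega$; the boundary data match by hypothesis, so the glued map is again $\kappa(K)$-quasiconformal on $\Omega'$. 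Then Lemma \ref{lem:HighInt} applied on $\Omega'$ gives, for every $p<\frac{2\kappa(K)}{\kappa(K)-1}$, a bound $\int_{\Omega}|\nabla y_k|^p\,\md x\le\int_{K}|\nabla \hat y_k|^p\,\md x\le c$ with $c$ \emph{independent of $k$}, for any compact $K$ with $\Omega\subset K\subset\Omega'$ and $\Omega'$ chosen so that the relevant balls lie inside. Crucially the constant in Lemma \ref{lem:HighInt} does not depend on $k$ since the distortion constant $\kappa(K)$ is fixed along the sequence.

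For the inverse Jacobian, the key identity is that for a (locally) quasiconformal homeomorphism the Jacobian of the inverse satisfies $J_{y_k^{-1}}(y_k(x))=\bigl(\det\nabla y_k(x)\bigr)^{-1}$, and by the $\kappa(K)$-quasiconformality $|\nabla y_k^{-1}|^2\le\kappa(K)\,J_{y_k^{-1}}$, so $|\nabla y_k^{-1}(z)|^2\ge J_{y_k^{-1}}(z)$ pointwise. A change of variables $z=y_k(x)$ (valid by Lusin's $N$-condition and the area formula, as in the proof of Lemma \ref{lemma:C-N}) then gives, for $s\ge 1$,
\begin{equation*}
\int_{\Omega}\bigl(\det\nabla y_k(x)\bigr)^{-s}\,\md x
=\int_{y_k(\Omega)}J_{y_k^{-1}}(z)^{\,s+1}\,\md z
\le\int_{y_k(\Omega)}|\nabla y_k^{-1}(z)|^{\,2(s+1)}\,\md z.
\end{equation*}
Since $y_k^{-1}$ coincides with the fixed affine map $A^{-1}(\cdot-b)$ on $\partial(y_k(\Omega))$ and $y_k(\Omega)=A\Omega+b$ is the \emph{same} bounded domain for all $k$ (again because the boundary data are fixed), I extend $y_k^{-1}$ by this affine map across $\partial(A\Omega+b)$, apply Lemma \ref{lem:HighInt} on a fixed enlargement, and obtain $\int_{\Omega}(\det\nabla y_k)^{-s}\,\md x\le c$ uniformly in $k$ as long as $2(s+1)<\frac{2\kappa(K)}{\kappa(K)-1}$, i.e.\ $s<\frac{1}{\kappa(K)-1}$. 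Taking $s=q$ slightly above the required threshold would fail the strict inequality, so instead one fixes $q<\frac{1}{\kappa(K)-1}$ and picks an auxiliary exponent $s$ with $q<s<\frac{1}{\kappa(K)-1}$: the uniform $L^s$-bound on $(\det\nabla y_k)^{-q}$ that results (via $\|(\det\nabla y_k)^{-q}\|_{L^{s/q}}^{s/q}=\|(\det\nabla y_k)^{-s}\|_{L^1}\le c$) gives equi-integrability of $\{(\det\nabla y_k)^{-q}\}_k$, and the analogous slack in the exponent handles $\{|\nabla y_k|^p\}_k$. Equi-integrable and $L^1$-bounded sequences are weakly sequentially precompact in $L^1(\Omega)$ (Dunford--Pettis), which yields the claimed weakly convergent subsequences (extract one common subsequence for both).

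The main obstacle I expect is the bookkeeping needed to make Lemma \ref{lem:HighInt} applicable with a constant genuinely independent of $k$: that lemma is stated for balls $B_{2r}(x_0)\subset\Omega$, so one must cover a fixed compact neighbourhood of $\overline\Omega$ (resp.\ of $\overline{A\Omega+b}$) by finitely many such balls inside the enlarged domain $\Omega'$, and check that the gluing of an affine map onto $y_k$ across $\partial\Omega$ — where $\partial\Omega$ is only Lipschitz — still produces a genuinely ($\kappa(K)$-)quasiconformal homeomorphism on $\Omega'$; Lemma \ref{lem:Gluing} is stated for a countable disjoint family almost covering the domain, which applies here with the two pieces $\Omega$ and $\Omega'\setminus\overline\Omega$, but one should confirm its hypotheses (simple connectedness, matching traces) are met, possibly after first reducing to a ball-shaped $\Omega'$. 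A secondary technical point is justifying the change-of-variables/area-formula step for $y_k^{-1}$ and the pointwise Jacobian identity $J_{y_k^{-1}}\circ y_k=(\det\nabla y_k)^{-1}$ a.e.; this is standard for quasiconformal homeomorphisms (both $y_k$ and $y_k^{-1}$ satisfy Lusin's $N$-condition), and was already invoked implicitly in the proofs of Lemmas \ref{lemma:C-N} and \ref{lem:Gluing}.
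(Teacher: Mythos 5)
Your proposal is correct and follows essentially the same route as the paper: reduce to fixed affine boundary data, extend across $\partial\Omega$ so that the local higher integrability of Lemma \ref{lem:HighInt} yields uniform $L^{1+\varepsilon}$ bounds on $|\nabla y_k|^p$, and handle $(\det\nabla y_k)^{-q}$ by the change of variables $\int_\Omega(\det\nabla y_k)^{-q}\,\md x=\int_{y_k(\Omega)}(\det\nabla y_k^{-1})^{q+1}\,\md z\le c\int|\nabla y_k^{-1}|^{2q+2}\,\md z$ together with the higher integrability of the (equally quasiconformal) inverses, giving exactly the threshold $q<\tfrac{1}{\kappa(K)-1}$. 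The extra care you take with the exponent slack and the gluing across the Lipschitz boundary is sound bookkeeping but does not change the argument.
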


\begin{proof}

By passing, if necessary, to the maps $\tilde{y}_k(x) = A^{-1}(y_k(x)-b)$, there is no loss in generality by assuming that $y_k: \Omega \to \Omega$ and $y_k(x) = x$ on $\partial \Omega$. Clearly, such maps can be extended by the identity to $\mathcal{QC}_\mathrm{loc}(\R^2; \R^2)$, so for simplicity we shall denote these extensions by $\{y_k\}_{k \in \mathbb{N}}$ as well. 

To see that $\{|\nabla y_k|^{ p}\}_{k \in \mathbb{N}}$ converges weakly in $L^1(\Omega)$, we notice that the higher integrability obtained in Lemma \ref{lem:HighInt} holds locally in $\R^2$ and so in particular $\{|\nabla y_k|^{ p}\}_{k \in \mathbb{N}}$ is bounded in $L^{1+{\varepsilon}}(\Omega)$ { for all $0<\varepsilon<\frac{2\kappa(K)}{(\kappa(K)-1)p}-1$} and hence has a subsequence weakly converging in $L^1(\Omega)$.

{ Note that, since the target domain is fixed and $\Omega$ as well, the same holds for the inverses; i.e. $\{|\nabla y_k^{-1}|^{ p}\}_{k \in \mathbb{N}}$ has a weakly weakly converging subsequence in $L^1(\Omega)$ as long as $p < \frac{2\kappa(K)}{\kappa(K)-1}$.}

{
To show the same for $\{(\mathrm{det} \nabla y_k)^{-{ q}}\}_{k \in \mathbb{N}}$ observe that, by the change of variables formula,
\begin{align*}
\int_\Omega \frac{1}{\det(\nabla y_k(x))^q} \d x &= \int_\Omega \frac{1}{(\det (\nabla y_k(x))^{q+1}} \det \nabla y_k(x) \d x 
= \int_\Omega (\det \nabla y_k^{-1}(y_k(x)))^{q+1} \det \nabla y_k(x) \d x   
\\&= \int_{y_k(\Omega)} (\det \nabla y_k^{-1}(z))^{q+1} \d z \leq c
\int_{\Omega} |\nabla y_k^{-1}(z)|^{2q+2} \d z.
\end{align*}
But $2q+2  < \frac{2\kappa(K)}{\kappa(K)-1}$ if and only if $q< \frac{1}{\kappa(K)-1}$ and so, since we found an equi-integrable majorant,  $\{(\mathrm{det} \nabla y_k)^{- q}\}$ is weakly converging in $L^1(\Omega)$ (at least for a sub-sequence). 
}
 \end{proof}
{
\begin{remark}
Notice that the above proof can be easily modified in order to obtain that the sequence $\{\phi(\mathrm{det} \nabla y_k)\}_{k \in \mathbb{N}}$ is equi-integrable for some function $\phi: \R_+ \to \R_+$ as long as it satisfies $\phi(t) \leq ct^{-q}$ with $q< \frac{1}{\kappa(K)-1}$. 

Moreover, in elasticity more general $\phi$, that may blow up at infinity, are used in growth conditions as in $\eqref{Orho}$. Also such a function can be incorporated in our proof, as long as the blow up at infinity is $s$-growth with $s < \frac{\kappa(K)}{\kappa(K)-1}$; i.e. $\phi(t) \leq C(1+t^s)$ for $t\geq r$ with some $r$ fixed, and for $t< r$ we still have that $\phi(t) \leq ct^{-q}$. Indeed, then we may separate
\begin{align*}
\sup_k \int_{B(x_0,\delta)} \phi(\det(\nabla y_k(x))) \d x &\leq \sup_k \int_{B(x_0,\delta)\cap (\{x\mid\det(\nabla y_k(x)) < r\} } \!\!\!\!\!\!\!\!\!\!\!\!\!\!\!\!\!\!\!\!\!\!\!\!\!\!\!\!\!\!\!\!\!\!\!\!\!\!\!\!\!\!\!\!\!\!\!\! \phi(\det(\nabla y_k(x))) \d x +  \sup_k \int_{B(x_0,\delta)\cap (\{x\mid\det(\nabla y_k(x)) \geq r\} } \!\!\!\!\!\!\!\!\!\!\!\!\!\!\!\!\!\!\!\!\!\!\!\!\!\!\!\!\!\!\!\!\!\!\!\!\!\!\!\!\!\!\!\!\!\!\!\! \phi(\det(\nabla y_k(x))) \d x  \\ &\leq \sup_k \int_{B(x_0,\delta)\cap (\{x\mid\det(\nabla y_k(x)) < r\} } \!\!\!\!\!\!\!\!\!\!\!\!\!\!\!\!\!\!\!\!\!\!\!\!\!\!\!\!\!\!\!\!\!\!\!\!\!\!\!\!\!\!\!\!\!\!\!\! \phi(\det(\nabla y_k(x))) \d x +  \sup_k \int_{B(x_0,\delta)\cap (\{x\mid\det(\nabla y_k(x)) \geq r\} } \!\!\!\!\!\!\!\!\!\!\!\!\!\!\!\!\!\!\!\!\!\!\!\!\!\!\!\!\!\!\!\!\!\!\!\!\!\!\!\!\!\!\!\!\!\!\!\! c(1+ |\nabla y_k(x)|^{2s})\d x,
\end{align*}
and proceed in both terms as in the the above proof.
\end{remark}
}

We are now ready to prove that measures in $\mathcal{GY}^{\mathcal{QC}}(\Omega; \R^{2 \times 2})$ satisfy the stricter version of the Jensen inequality given in Theorem \ref{THM2}; Theorem \ref{THM3} will follow from this.

\begin{proof}(of Theorem \ref{THM2})

First, we realize that for any  $\nu\in \mathcal{GY}^\mathcal{QC}(\Omega; \mathbb{R}^{2 \times 2})$, the homogeneous measure $\mu:=\{\nu_a\}_{x\in\O}$ is in $\mathcal{GY}^\mathcal{QC}(\Omega; \mathbb{R}^{2 \times 2})$, too, for a.e. $a\in \O$. To see this, we follow  \cite[Theorem ~7.2]{pedregal}: Indeed, if gradients of the sequence $\{y_k\}\subset \mathcal{QC}(\Omega; \mathbb{R}^{2 \times 2})$ generate $\nu$ then, for almost all $a\in\O$, we  construct a localized sequence $\{j y_k(a+x/j)\}_{j,k\in\N}$ (note that this is just a pre/post-composition with a similarity so it cannot affect $K$-quasiconformality) whose gradients generate $\mu$ as $j,k\to\infty$. { Moreover, we see that if $\nu$ could be generated by a sequence of $K$-quasiconformal maps than the same is true for $\mu$ (because the localization does not affect the distortion).}

Fix some $a \in \Omega$ { and find a generating sequence of $K$-quasiconformal maps $\{\nabla y_k\}_{k\in\N} \subset \mathcal{QC}(\Omega; \mathbb{R}^2)$ for} $\mu=\{\nu_a\}_{x\in\O}\in\mathcal{GY}^{\mathcal{QC}}(\Omega; \mathbb{R}^{2 \times 2})$. Then, this sequence converges weakly in $\mathcal{QC}(\Omega; \mathbb{R}^2)$ to the affine map $x \mapsto {y(a) +} (\nabla y(a))x$; notice that, since $y$ is quasiconformal, we may assume that $\det(\nabla y(a)) > 0$.   

Using Proposition~\ref{prop-cutOff}, we can without loss of generality suppose that $y_k(x)={y(a) +}\nabla y(a)x$ if $x\in\partial\O$ { by which we obtain a $\kappa(K)$-quasiconformal generating sequence}. Moreover, due to Lemma \ref{lem:non-conc} $\{|\nabla y_k|^{p}\}$ and $\{(\det(\nabla y_k))^{-q}\}$ { (with $p < \frac{2\kappa(K)}{\kappa(K)-1}$ and $q< \frac{1}{\kappa(K)-1}$)} are weakly convergent in $L^1(\Omega)$. Therefore, we have
$$
|\O|\int_{\R^{2\times 2}} v(s)\nu_a(\md s) = \lim_{k\to\infty} \int_\O v(\nabla y_k(x))\,\md x \ge |\O| v(\nabla y(a)) \ .$$
for any $v \in \mathcal{E}{( K)}$ that is {$\kappa(K)$}-quasiconformally quasiconvex.
\end{proof}

\begin{proof}(of Theorem \ref{THM3})
For showing the weak lower semicontinuity, take a sequence of { $K$-quasiconformal} maps $\{y_k\}_{k \in \N}$ that converges weakly in $\mathcal{QC}(\Omega; \mathbb{R}^2)$ to $y$. We know that this sequence generates a measure $\nu \in \mathcal{GY}^\mathcal{QC}(\Omega; \mathbb{R}^{2 \times 2})$  and so we have from Theorem \ref{THM2}
$$
 \int_\Omega v(\nabla y(x)) \md x  \le\int_\Omega \int_{\R^{2 \times 2}} v(s)\nu_x(\md s) \d x \leq \liminf_{k \to \infty} \int_\Omega v(\nabla y_k) \md x
$$
for any $v \in \mathcal{E}$ that is quasiconformally quasiconvex.

For the opposite, { let us first realize that if $I(y):=\int_\O v(\nabla y(x))\,\d x$ is weakly lower semicontinuous along $K$-quasiconformal sequences then so is $I_{\Omega'}(y):=\int_{\Omega'} v(\nabla y(x))\,\d x$ for any Lipschitz domain $\Omega' \subset \Omega $. Indeed, consider first the case when $\Omega' = a+\delta\Omega$, i.e. $\Omega'$ is a scaled copy of $\Omega$. Take any $K$-quasiconformal sequence $\{y_k\}_{k \in \mathbb{N}}$ converging weakly to $y$  in $\mathcal{QC}(\Omega'; \mathbb{R}^2)$. Then $\{\frac{1}{\delta}y_k(a+\delta x)\}_{k \in \mathbb{N}}$ converges weakly to $\frac{1}{\delta}y(a+\delta x)$  in $\mathcal{QC}(\Omega; \mathbb{R}^2)$ (note that also the quasiconformality constant remained unchanged) and so
$$
\int_{\Omega'} v(\nabla y(x'))\,\d x' = \delta^2 \int_\O  v(\nabla y(a+\delta x))\,\d x \leq \liminf_{k \to \infty} \delta^2\int_\O  v(\nabla y_k(a+\delta x))\,\d x = \liminf_{k \to \infty} \int_{\Omega'} v(\nabla y_(x'))\,\d x'
$$
Now if $\Omega'$ is a general Lipschitz subdomain of $\Omega$, we may find for every $\varepsilon > 0$ a \emph{finite} collection of disjunct sets $\{(a_i+\delta_i\Omega)\}_i$ such that $\bigcup_i (a_i+\delta_i\Omega) \subset \Omega'$ and $|\Omega'\setminus \bigcup_i (a_i+\delta_i\Omega)| \leq \varepsilon$. Since any sequence $\{y_k\}_{k \in \mathbb{N}}$ of $K$-quasiconformal maps converging weakly to $y$  in $\mathcal{QC}(\Omega'; \mathbb{R}^2)$ is also weakly convergent to the same limit when restricted to $a_i+\delta_i\Omega$, we obtain that
$$
\sum_i \int_{a_i+\delta_i\Omega} v(\nabla y(x))  \leq \sum_i \liminf_{k\to\infty} \int_{a_i+\delta_i\Omega} v(\nabla y_k(x)) \d x \leq  \liminf_{k\to\infty} \sum_i \int_{a_i+\delta_i\Omega} v(\nabla y_k(x)) \d x \leq \int_{\Omega'} v(\nabla y_k(x)),
$$
and taking $\varepsilon \to 0$ yields the claim.
}

Take any { $K$-quasiconformal} $y \in \mathcal{QC}(\Omega; \mathbb{R}^2)$ such that $y(x) = Ax$ in $\partial \Omega$. Then this $y$ defines a homogeneous Young measure $\nu\in \mathcal{GY}^\mathcal{QC}(\Omega; \mathbb{R}^{2 \times 2})$ with $A$ being its first moment via setting  
$$\int_{\R^{2\times 2}}f(s)\nu(\d s):=|\O|^{-1}\int_\O f(\nabla y(x))\,\d x$$ 
for every $f$ in $\mathcal{E}{(K)}$. Let us find a generating sequence for $\nu$ consisting of gradients of { $K$-}quasiconformal maps $\{y_k\}_{k \in \mathbb{N}}$. { Notice that we can adjust the sequence to satisfy $y_k(x) = Ax$ on $\partial \Omega$ which may change the distortion to $\kappa(K)$; however, it also follows from Proposition \ref{prop-cutOff} that for any $\varepsilon > 0$ the functions $y_k$ are not modified on a suitable Lipschitz domain $\Omega_\varepsilon \subset \Omega$ with $|\Omega \setminus \Omega_\varepsilon| \leq \varepsilon$, for all $k$ large enough. Therefore the adjusted $y_k$ still are  $K$-quasiconformal on $\Omega_\varepsilon$.}  Recall that for such a sequence $\{|y_k|^p\}_{k \in \mathbb{N}}$ as well as $\{(\mathrm{det} \nabla y_k)^{-q}\}_{k \in \mathbb{N}}$ are weakly converging in $L^1(\Omega)$ { (with $p < \frac{2\kappa(K)}{\kappa(K)-1}$ and $q< \frac{1}{\kappa(K)-1}$)}. Also notice that $y_k \rightharpoonup Ax$ in $\mathcal{QC}(\Omega; \mathbb{R}^2)$ since $A$ is the first moment of $\nu$. 

{
Now, since  $I_\varepsilon(y):=\int_{\Omega_\varepsilon} v(\nabla y(x))\,\d x$ is weakly lower semicontinuous on  $\mathcal{QC}(\Omega_\varepsilon; \mathbb{R}^2)$ we get 
$$
(|\O|- |\Omega \setminus \Omega_\varepsilon|)v(A) = I_\varepsilon(Ax) \leq \liminf_{k\to\infty}I_\varepsilon(y_k) = \int_{\Omega_\varepsilon} \int_{\R^{2\times 2}}v(s)\nu(\d s)\d x = \int_{\Omega_\varepsilon} v(\nabla y(x))\,\d x \leq  \int_\O v(\nabla y(x))\,\d x \ ,$$
which, when passing with $\varepsilon \to 0$, shows that $v$ is $K$-quasiconformally quasiconvex.}
\end{proof}
 
{
\begin{proof}(of Theorem \ref{minimizers})
First, let us realize that there exist maps in $W^{1,2}(\Omega;\R^2)$, that satisfy the Ciarlet-Ne\v{c}as condition \eqref{Ciarlet-Necas} and coincide with $\eta$ on $\Gamma$, on which the functional $J$ is finite. This follows from the fact that $\eta$ is quasi-symmetric; hence, it can be extended to a quasiconformal mapping of $\Omega$ that coincides with $\eta$ on $\partial \Omega$ and so, in particular, also in $\Gamma$ (see Lemma \ref{lemma-extension} and the remarks below its proof). From Lemma \ref{lemma:C-N} we know that this map satisfies \eqref{Ciarlet-Necas} and because the function is quasiconformal its distortion $\frac{|\nabla y|^2}{\mathrm{det}(\nabla y)}$ is uniformly bounded.

Let us take a minimizing sequence of $J(y)$ denoted $\{y_k\}_{k \in \mathbb{N}} \subset W^{1,2}(\Omega;\R^2)$ that satisfies the Ciarlet-Ne\v{c}as condition  and coincides with $\eta$ on $\Gamma$.  Since $J$ has to be uniformly bounded along this sequence, i.e.
$
J(y_k) \leq C
$,
we have that $\det(\nabla y_k(x)) > 0$  a.e. on $\Omega$ for all $k \in \mathbb{N}$ and there exists a constant $K$ such that 
$$
\left\|\frac{|\nabla y|^2}{\mathrm{det}(\nabla y)} \right\|_{L^\infty(\Omega)} \leq K \qquad \forall k \in \mathbb{N};
$$
in other words the minimizing sequence is uniformly $K$-quasiregular and non-constant. Moreover, since each individual member of the sequence satisfies \eqref{Ciarlet-Necas} it is quasiconformal (see Lemma \ref{lemma:C-N}). 

Thus, we may select a weakly convergent subsequence of  $\{y_k\}_{k \in \mathbb{N}}$ (not relabeled) in $W^{1,2}(\Omega;\R^2)$ with the weak limit being $y \in W^{1,2}(\Omega;\R^2)$. Clearly, since $y$ coincides with $\eta$ on $\Gamma$, it is non-constant and hence the convergence is weak even in $\mathcal{QC}(\Omega; \R^{2 \times 2})$. Owing to Remark \ref{rem-convQC}, $y$ is $K$-quasiconformal whence $\det(\nabla y) > 0$ a.e. on $\Omega$. 

Due to the weak-lower semicontinuity theorem \ref{THM3}, we have that
$$
\int_\Omega v(\nabla y) \dd x \leq \liminf_{k \to \infty} \int_\Omega v(\nabla y_k) \dd x.
$$

It remains to show the distortion is weakly lower semi-continuous, i.e.
\begin{equation}
\label{dist-weak}
\left\|\frac{|\nabla y|^2}{\mathrm{det}(\nabla y)} \right\|_{L^\infty(\Omega)}\leq \liminf_{k \to \infty} \left\|\frac{|\nabla y_k|^2}{\mathrm{det}(\nabla y_k)} \right\|_{L^\infty(\Omega)},
\end{equation}
because then
$$
J(y) \leq \liminf_{k \to \infty} \int_\Omega v(\nabla y_k) \dd x + \varepsilon \liminf_{k \to \infty} \left\|\frac{|\nabla y_k|^2}{\mathrm{det}(\nabla y_k)} \right\|_{L^\infty(\Omega)} \leq \liminf_{k \to \infty} J(y_k),
$$
i.e. $y$ is the sought minimizer.

For showing \eqref{dist-weak}, we use Remark \ref{rem-convQC} that states that the weak limit of $Q$-quasiconformal mappings in $\mathcal{QC}(\Omega; \R^{2 \times 2})$ is also $Q$-quasiconformal. In fact, up to selecting another subsequence, we may assume that 
$$
\lim_{k \to \infty} \left\|\frac{|\nabla y_k|^2}{\mathrm{det}(\nabla y_k)} \right\|_{L^\infty(\Omega)} = Q;
$$
therefore, for any $\varepsilon > 0$, we know that for $k$ large enough $\left\|\frac{|\nabla y_k|^2}{\mathrm{det}(\nabla y_k)} \right\|_{L^\infty(\Omega)} \leq  Q + \varepsilon$ that is $\{y_k\}$ are $(Q+\varepsilon)$-quasiconformal. But then so is the limit $y$ and thus
$$ 
\left\|\frac{|\nabla y|^2}{\mathrm{det}(\nabla y)} \right\|_{L^\infty(\Omega)} \leq Q+\varepsilon.
$$
Finally, since $\varepsilon > 0$ was arbitrary, \eqref{dist-weak} is obtained.
\end{proof}
}

\section{Cut-off technique preserving for quasiconformal maps}
\label{sect-cutOff}

Within this section, we present our cut-off technique that preserves quasiconformality, which is the crucial ingredient to the proofs of Theorems \ref{THM1}-\ref{THM3}. 

\begin{proposition}\label{prop-cutOff}
  Let $\mathrm{diam}(\Omega) >> \varepsilon > 0$. Further let $y_k, y \in \mathcal{QC}(\Omega; \mathbb{R}^2)$  be $K$-quasiconformal. Then there exists a $\delta << \varepsilon$ that depends only on $y, K$ and $\varepsilon$ such that if $y_k, y$ satisfy  
\begin{equation}
  \|y-y_k\|_{L^\infty(B_R(x_0); \mathbb{R}^2)} \leq \delta \qquad \text{and} \qquad  \|y^{-1}-y_k^{-1}\|_{L^\infty(B_R(z_0); \mathbb{R}^2)} \leq \delta
  \label{needs-CutOff}
\end{equation}
   for all $x_0$ and ${R\geq\varepsilon}$ such that $B_{2R}(x_0) \subset \Omega$ and all $z_0$ and ${R\geq\max_{\abs{x-x_0}=\varepsilon}\abs{y(x)-y(x_0)}}$ such that $B_{2R}(z_0) \subset y(\Omega) \cap y_k(\Omega)$,\footnote{Note that the conditions from \eqref{needs-CutOff} hold for $k$ large enough if the sequence $\{y_k\}$ along with its inverses converge locally uniformly to $y$ and its inverse, respectively.} a $\kappa(K)$-quasiconformal function $\omega \in \mathcal{QC}(\Omega;\mathbb{R}^2)$ with the following properties can be constructed:
  \begin{enumerate}
    \item $
  \|y-\omega\|_{L^\infty(\Omega; \mathbb{R}^2)} \leq C(\varepsilon) \qquad \text{and} \qquad \|y^{-1}-\omega^{-1}\|_{L^\infty(y(\Omega); \mathbb{R}^2)} \leq C(\varepsilon),
$
  \item $\kappa(K)$ depends only on $K$,
    \item $\omega\mid_{\partial \Omega} = y\mid_{\partial\Omega}$,
    \item $\abs{\left\{x\in \Omega: y_k(x) \neq \omega\right\}} < C(\varepsilon)$,
  \end{enumerate}  
  where $C(\varepsilon) \rightarrow 0$ for $\varepsilon \rightarrow 0$.
\end{proposition}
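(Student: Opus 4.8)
The plan is to leave $y_k$ unchanged deep inside $\Omega$ and to rebuild it, quasiconformally, inside a thin collar of $\partial\Omega$ so that it acquires the boundary values of $y$. I describe the model case $\Omega=B_1(0)$, which is the only one needed for Theorems~\ref{THM1}--\ref{THM3}; for a general Lipschitz domain the same construction is carried out in finitely many boundary charts. Fix the collar $A_\varepsilon:=B_1(0)\setminus\overline{B_{1-c\varepsilon}(0)}$ for a suitable absolute constant $c$, set $\omega:=y_k$ on $\overline{B_{1-c\varepsilon}(0)}$ (there \eqref{needs-CutOff} already gives $\|y-y_k\|_\infty\le\delta$), and cut $A_\varepsilon$ into $N\sim 1/\varepsilon$ curved-square cells $Q_1,\dots,Q_N$ with vertices $e^{i\theta_j}$, $(1-c\varepsilon)e^{i\theta_j}$, $\theta_{j+1}-\theta_j\sim\varepsilon$. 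After dilation by $1/\varepsilon$ each $Q_j$ is uniformly bi-Lipschitz to the unit square (with constant tending to $1$ as $\varepsilon\to0$), so $\partial Q_j$ is a quasicircle with an absolute constant.

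On $\partial Q_j$ I prescribe $\omega$ as $y$ on the outer arc $\partial Q_j\cap\partial B_1(0)$, as $y_k$ on the inner arc $\partial Q_j\cap\partial B_{1-c\varepsilon}(0)$, and on each radial connector $\{te^{i\theta_j}\}$ (shared by two neighbouring cells, hence defined once) as a curve joining the value $y_k$ at the inner endpoint to the value $y$ at the outer endpoint. As stressed in the introduction, the naive affine interpolation in the target is not admissible here, since it gives no control on the shape of $\omega(\partial Q_j)$; the connector has to be chosen adapted to the local geometry of the two boundary curves. The two facts that make this work are: (a) $y$, $y_k$, $y^{-1}$, $y_k^{-1}$ are $K$-quasiconformal, hence uniformly Hölder, so at scale $\varepsilon$ the curves $y(\partial B_1(0))$ and $y_k(\partial B_{1-c\varepsilon}(0))$ have comparable local geometry, being pieces of $K$-quasicircles; and (b) $\delta=\delta(y,K,\varepsilon)$ is chosen so small that, by \eqref{needs-CutOff}, $y_k$ and $y_k^{-1}$ are within $\delta$ of $y$ and $y^{-1}$ on the relevant sets, whence $y_k(\overline{B_{1-c\varepsilon}(0)})\subset\subset y(B_1(0))$ and $y_k(\partial B_{1-c\varepsilon}(0))$ is a Jordan curve lying strictly inside $y(\partial B_1(0))$ at distance $\le C(\varepsilon)$ from it. Using (a)--(b) one takes the connectors to be short quasi-arcs joining the inner and outer curves transversally and pairwise disjoint, so that $\omega|_{\partial Q_j}$ is an injective, $\eta(K)$-quasisymmetric parametrization of a quasicircle $\omega(\partial Q_j)$ whose quasicircle constant depends only on $K$. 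This is the heart of the argument.

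Now apply the extension Lemma~\ref{lemma-extension} (in its quasidisk form) cell by cell: for each $j$ there is a $\kappa(K)$-quasiconformal $\omega_j:Q_j\to\mathrm{Int}\,\omega(\partial Q_j)$ agreeing with the prescribed boundary data, with $\kappa(K)$ depending only on $\eta(K)$, hence only on $K$; this is property~(2). Setting $\omega:=\omega_j$ on $Q_j$ and $\omega:=y_k$ on $B_{1-c\varepsilon}(0)$ produces a continuous map lying in $W^{1,2}(\Omega;\R^2)$ and $\kappa(K)$-quasiregular (the pieces match along the one-dimensional net, which has measure zero; cf. the proof of Lemma~\ref{lem:Gluing}), with $\omega|_{\partial\Omega}=y|_{\partial\Omega}$ by construction, which is property~(3). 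It remains to check injectivity: the net consisting of $y(\partial B_1(0))$, $y_k(\partial B_{1-c\varepsilon}(0))$ and the images of the connectors decomposes the target collar $\mathrm{Int}\,y(\partial B_1(0))\setminus y_k(\overline{B_{1-c\varepsilon}(0)})$ into the mutually disjoint Jordan domains $\mathrm{Int}\,\omega(\partial Q_j)$; hence the sets $\omega_j(Q_j)$ are pairwise disjoint and disjoint from $y_k(B_{1-c\varepsilon}(0))$, so $\omega$ is globally injective, and being also non-constant and $\kappa(K)$-quasiregular it is $\kappa(K)$-quasiconformal by Lemma~\ref{lemma:C-N} (equivalently, by the gluing Lemma~\ref{lem:Gluing} and the remark following it).

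Finally, since $\omega$ differs from $y_k$ only on $A_\varepsilon$ and $|A_\varepsilon|\to0$ as $\varepsilon\to0$, property~(4) holds with $C(\varepsilon)=|A_\varepsilon|$. For property~(1), on $B_{1-c\varepsilon}(0)$ we have $\omega=y_k$ and hence $\|y-\omega\|_\infty\le\delta\le C(\varepsilon)$; on each $Q_j$ the boundary values of $\omega$ lie within $C(\varepsilon)$ of the values of $y$ on $\overline{Q_j}$ by the uniform Hölder continuity of $y$ at scale $\varepsilon$ together with $\|y-y_k\|_\infty\le\delta$, and since $\omega_j(Q_j)=\mathrm{Int}\,\omega(\partial Q_j)$ while $y(Q_j)$ is the Jordan domain bounded by $y(\partial Q_j)$, these two Jordan domains are within Hausdorff distance $C(\varepsilon)$, giving $\|y-\omega\|_{L^\infty(Q_j;\R^2)}\le C(\varepsilon)$; taking the supremum over $j$ yields the first estimate in~(1), and the estimate for the inverses follows identically from the Hölder continuity of $y^{-1}$ and the second half of \eqref{needs-CutOff}. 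The main obstacle is precisely the construction on the connecting segments described in the second paragraph: in contrast to the bi-Lipschitz case of \cite{bbmk2013}, affine interpolation fails, and the interpolation must be designed so that the cell-boundary parametrizations are quasisymmetric and their images are quasicircles with moduli controlled solely by $K$, uniformly in $\varepsilon$ and $k$; everything else is routine bookkeeping with Hölder estimates and the extension and gluing lemmas.
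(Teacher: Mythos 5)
Your overall architecture coincides with the paper's: keep $y_k$ in the bulk, keep $y$ in a collar of $\partial\Omega$, tile the transition region into cells of side $\sim\varepsilon$, prescribe $\omega$ on the one-dimensional skeleton, and extend cell by cell via Beurling--Ahlfors (Lemma \ref{lemma-extension}), finishing with the gluing/Ciarlet--Ne\v{c}as argument. Properties (2)--(4) and the $L^\infty$ estimates in (1) are then indeed routine. The problem is that the step you yourself flag as ``the heart of the argument'' --- the construction of the connectors joining the inner and outer curves --- is asserted, not carried out. Saying that ``one takes the connectors to be short quasi-arcs joining the inner and outer curves transversally and pairwise disjoint, so that $\omega|_{\partial Q_j}$ is an injective, $\eta(K)$-quasisymmetric parametrization of a quasicircle'' is a restatement of what must be proved, and your facts (a)--(b) (uniform H\"older/quasisymmetry of $y,y_k$ and $\delta$-closeness) do not by themselves produce such connectors. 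This is exactly where the paper spends most of Section \ref{sect-cutOff}: it defines the bridge via the first radius $r$ at which $y(\partial B_s(\alpha))$ and $y_k(\partial B_s(\alpha+\varepsilon\unit_1))$ meet, pulls a common point $z_0$ back through $y^{-1}$ and $y_k^{-1}$, interpolates with explicit bi-Lipschitz affine maps $\phi_1,\phi_2$ (Construction \ref{const-inner}), and then needs Lemma \ref{lemma-constructionProperties} and Lemma \ref{lemma-quasicircle} to show the resulting curve is injective, avoids all other images, and satisfies the arc condition \eqref{Arc-condition} with a constant depending only on $K$.

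Moreover, even once the image of the bridge is known to be a quasi-arc, there is a second obstruction that your proposal never addresses: the concatenated parametrization of $\partial Q_j$ must be quasisymmetric \emph{across} the junction points where the connector meets the outer arc (parametrized by $y$) and the inner arc (parametrized by $y_k$), and across the midpoint where the two halves of the bridge meet. Each piece being quasisymmetric with a common modulus does not imply this: the two sides can approach a junction at incomparable ``speeds'', so the ratio $\abs{\omega(t+h)-\omega(t)}/\abs{\omega(t)-\omega(t-h)}$ can blow up even though the image is a nice quasicircle. The paper resolves this with the one-dimensional splicing results (Lemma \ref{lemma-1d}, Proposition \ref{prop-reparam}) and the final reparametrization in Construction \ref{const-final}, which match the parametrization to $y$ and $y_k$ near the endpoints while making it quasisymmetric near the midpoint. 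Without the explicit connector construction and this reparametrization step, the claim that $\omega|_{\partial Q_j}$ is $\eta(K)$-quasisymmetric --- which is the hypothesis needed to invoke Lemma \ref{lemma-extension} with $\kappa(K)$ depending only on $K$ --- is unsupported, so the proof as written has a genuine gap at its central point.
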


We prove Proposition \ref{prop-cutOff} in the remainder of this section by explicitly constructing the sought function $\omega$. To do so, we will divide the domain $\Omega$ into three parts: An outer shell $\Omega_{outer}$, which includes all points of $\Omega$ close to $\partial \Omega$, the set $\Omega_{inner}$, consisting of the bulk of $\Omega$, and a small strip between the two sets, denoted $\Omega_{mid}$; cf. Construction \ref{const-outer} for a formal definition and Figure \ref{fig-PartitionOmega} for a better overview. Let us also note that even though the names for the partitions of $\Omega$ are lent from the situation when $\Omega$ is simply connected, simple connectivity is not needed in our proof.

We will simply set $\omega = y$ on $\Omega_{outer}$ to obtain the right boundary condition (Proposition \ref{prop-cutOff}, item 3) and $\omega = y_k$ on $\Omega_{inner}$ in order to satisfy condition Proposition \ref{prop-cutOff}, item 4. Finally, on the strip $\Omega_{mid}$ we will join the two parts using the Beurling-Ahlfors extension so that the resulting function still ends up in $\mathcal{QC}(\Omega; \mathbb{R}^2)$ with a quasiconformality constant depending only on $K$. However, as explained in the proof of Lemma \ref{lemma-extension}, in order to apply the Beurling-Ahlfors extension on a given domain we need to be able to transform it conformally to the half-plane. This, in particular, is not possible for $\Omega_{mid}$, since it is in general not simply connected and so we will further partition $\Omega_{mid}$ into squares on each of which the extension property can be used. Yet, then we have to define $\omega$ on edges of the squares which lie strictly in $\Omega_{mid}$ (so-called ``bridges'', denoted $G$ in Construction \ref{const-outer} and Figure \ref{fig-PartitionOmega}) in a quasisymmetric way with the quasisymmetry modulus $\eta$ determined only by $K$. This will form the heart of our construction and, in fact, the major part of the proof.
 
Let us start by giving a detailed description of the partition of the domain:

\begin{construction}[Partition of $\Omega$] \label{const-outer}
  Fix $\mathrm{diam}(\Omega) >> \varepsilon > 0$ and consider the grid of points $\alpha \in \varepsilon \cdot \Z^2$. Using this grid, we tile $\Omega$ into 
  $$S_{\alpha} := \left\{x \in \R^2\mid \alpha_1 \leq x_1 \leq \alpha_1+\varepsilon \wedge \alpha_2 \leq x_2 \leq \alpha_2+\varepsilon \right\} \cap \Omega$$ 
  and set
  \begin{align*}
  \Omega_{outer} &:= \bigcup \left\{ S_\alpha: \alpha\in \varepsilon\Z^2, \mathrm{dist}(S_\alpha, \partial \Omega) < 2\gamma \varepsilon \right\}, \\ 
  \Omega_{inner} &:= \bigcup \left\{ S_\alpha: \alpha\in \varepsilon\Z^2, S_\alpha \cap \Omega_{outer} = \emptyset \right\}, \\
  \Omega_{mid} &:= \Omega \setminus (\Omega_{inner} \cup \Omega_{outer}),
 \end{align*}
  where $\gamma$ is the smallest integer satisfying $\gamma \geq 1$ and $\eta(1/\gamma) \leq 1/4$ with $\eta$ being the local quasisymmetry modulus of $y$ and $y_k$ (notice that this function depends only on $K$ due to Lemma \ref{lemma-locQuasi}).  
  Furthermore, we denote by $G$ all grid-lines $(\alpha, \alpha + \unit_i \varepsilon) \subset \Omega_{mid}$ for $i \in \{1,2\}$. 
  
  Finally, 
   we set
\begin{equation}
\omega(x) := \begin{cases}y(x) &\mbox{ for } x \in \Omega_{outer} \\ y_k(x) &\mbox{ for } x\in \Omega_{inner} \end{cases},
\label{Omega-def1}
\end{equation}
\end{construction}

So, $\Omega_{outer}$ consists of all those squares that are close to $\partial \Omega$ ($\varepsilon$ is presumed to be small) and $\Omega_{mid}$ is essentially a one square deep row separating $\Omega_{inner}$ and $\Omega_{outer}$; we refer to Figure \ref{fig-PartitionOmega} for an illustration of the situation. 
\begin{figure}
\center{\includegraphics{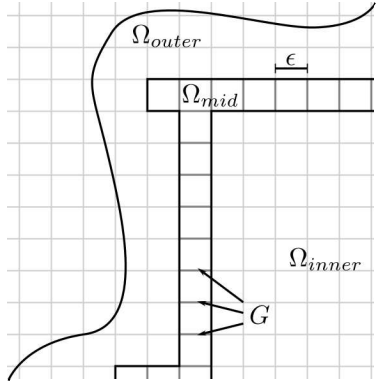}}
\caption{Partition of $\Omega$}
\label{fig-PartitionOmega}
\end{figure}

\begin{remark}
\label{remark-quasisym}
Note that $\gamma$ determining the distance of $\Omega_{mid}$ to the boundary is chosen in such a way that \eqref{needs-CutOff} is satisfied and $y$ as well as $y_k$ are $\eta$-quasisymmetric on each of the squares in $\Omega_{mid}$. For this, we need to verify that every such square lies in a ball $B_R(x_0)$ such that $B_{2R}(x_0) \subset \Omega$ and the image lies in another ball $B_R(z_0)$ such that $B_{2R}(z_0) \subset y(\Omega) \cap y_k(\Omega)$. 

Now the first part is easy to see. As for the image of the square, we consider first its image under $y$. Let $x_0$ be the midpoint of the square, then we realize that the image of the square under $y$ has to lie in $y(B_\varepsilon(x_0))$ which itself lies in a ball $B_{\max_{\{|x-x_0|=\varepsilon\}} |y(x_0) - y(x)|}(y(x_0))$. On the other hand, we know that $B_{\min_{\{|x-x_0|=\gamma \varepsilon\}} |y(x_0) - y(x)|}(y(x_0))$ is contained in $y(\Omega)$. Due to quasisymmetry, 
$$
\frac{\max_{\{|x-x_0|=\varepsilon\}} |y(x_0) - y(x)|}{\min_{\{|x-x_0|=\gamma \varepsilon\}} |y(x_0) - y(x)|} \leq \eta(1/\gamma).
$$
This shows that 
$$B_{2\max_{\{|x-x_0|=\varepsilon\}} |y(x_0) - y(x)|}(y(x_0)) \subset B_{4\max_{\{|x-x_0|=\varepsilon\}} |y(x_0) - y(x)|}(y(x_0)) \subset y(\Omega).$$
Analogously, we obtain that
$$B_{2\max_{\{|x-x_0|=\varepsilon\}} |y_k(x_0) - y_k(x)|}(y_k(x_0)) \subset B_{4\max_{\{|x-x_0|=\varepsilon\}} |y_k(x_0) - y_k(x)|}(y_k(x_0)) \subset y_k(\Omega).$$

Finally, we need to verify that the ball of radius $B_{2\max_{\{|x-x_0|=\varepsilon\}} |y(x_0) - y(x)|}(y(x_0)) \subset y_k(\Omega)$ and vice versa. For this we choose 
\begin{equation}
\delta \leq \min_{x_0} \max_{\{|x-x_0|=\varepsilon\}} |y(x_0) - y(x)|;
\label{delta_choice}
\end{equation}
note that there is a finite number of $x_0$'s (depending on $\varepsilon$) so that the minimum can be found and is positive. But then, since
$$
|y(x_0) - y(x)|-2\delta\leq |y_k(x_0) - y_k(x)| \leq |y(x_0) - y(x)|+2\delta,
$$
we have that $B_{2\max_{\{|x-x_0|=\varepsilon\}} |y_k(x_0) - y_k(x)|}(y_k(x_0)) \subset B_{4\max_{\{|x-x_0|=\varepsilon\}} |y(x_0) - y(x)|}(y(x_0))$ as well as \\ $B_{2\max_{\{|x-x_0|=\varepsilon\}} |y(x_0) - y(x)|}(y(x_0)) \subset B_{4\max_{\{|x-x_0|=\varepsilon\}} |y_k(x_0) - y_k(x)|}(y_k(x_0))$ which shows the claim.
\end{remark}

Note that \ref{Omega-def1} defines $\omega$ everywhere except for $\Omega_{mid}$. It is trivial to see that $\omega$ fulfills conditions {3 and 4} in Proposition \ref{prop-cutOff} and that item 1 and 2 in this proposition hold so far as $\omega$ is defined. Furthermore, if $\delta < \varepsilon/5$ and \eqref{delta_choice} hold $\partial\Omega_{inner}$ and $\partial\Omega_{outer}\setminus\partial\Omega$ will not intersect (see Lemma \ref{lemma-constructionProperties} below), which makes $\omega$ injective so far as defined.
%< \frac{c(\varesilon)\varepsilon^{1/K}{2}$\todo{BB: Look up and specify c(\varepsilon!!)} $\omega$ function is globally injective. In fact, take $x_1 \in \partial\Omega_{outer}\setminus\partial\omega$ and $x_2 \in \partial\Omega_{inner}$, then we have that both $x_1$ and $x_2$ lie in ball $B_{2\varepsilon}(x_0)$ and $B_{4\varepsilo}(x_0) \subset \Omega$  and so by the local bi-H\"older property of quasi-conformal maps: \todo{Or perhaps }
%$$
%|\omega(x_1)-\omega(x_2)| \geq |y(x_1)-y(x_2)|
%$$

It remains to define $\omega$ on $\Omega_{mid}$, which is the non-trivial part of the construction, however. As outlined above, we first define $\omega$ on the grid segments of $G$. This can be done for each grid line in $G$ independently, and so, since all the cases are essentially equivalent, we may turn our attention to the  single line segment $(\alpha,\alpha+ \varepsilon \unit_1)$ with $\alpha \in \Omega_{outer}$ and $\alpha + \varepsilon \unit_1 \in\Omega_{inner}$. 

On this consider the following construction:

\begin{construction}[Building a bridge on $G$] \label{const-inner}
  Define
  \[r := \min\left\{ s > 0: y(\partial B_s(\alpha)) \cap y_k(\partial B_s(\alpha+ \varepsilon \unit_1)) \neq \emptyset \right\}\]
  and take some $z_0\in y(\partial B_r(\alpha)) \cap y_k(\partial B_r(\alpha+\varepsilon \unit_1))$. Then we define the affine functions
\begin{align*}
\phi_1(x) &= \frac{2}{\varepsilon}(y^{-1}(z_0)-\alpha)(x_1-\alpha_1) + \alpha + \unit_2 (x_2-\alpha_2), \\
  \phi_2(x ) &= \frac{2}{\varepsilon}(y_k^{-1}(z_0) - (\alpha + \varepsilon \unit_1) ) (\alpha_1 + \varepsilon - x_1) +\alpha + \varepsilon \unit_1+ \unit_2 (x_2-\alpha_2),
\end{align*}
that are constructed in such a way that 
\begin{align*}
\phi_1(\alpha+s \unit_2) &= \alpha+s \unit_2 \quad \text{ and } \quad \phi_2(\alpha+\varepsilon \unit_1+s \unit_2) = \alpha+\varepsilon \unit_1+s \unit_2  \qquad \forall s\in\R, \\
\phi_1\left(\alpha+\frac{\varepsilon}{2} \unit_1\right) &= y^{-1}(z_0) \quad \text{ and } \quad \phi_2\left(\alpha+\frac{\varepsilon}{2} \unit_1\right) = y_k^{-1}(z_0),
\end{align*}
see Figure \ref{fig-bridge} for an illustration of the situation.

We now define
  \[\omega_\alpha: (0,\varepsilon) \rightarrow \R^2, s \mapsto \begin{cases} y\circ \phi_1(\alpha + s \unit_1) & \mbox{ for } s < \varepsilon/2 \\ y_k\circ \phi_2(\alpha + s \unit_1) & \mbox{ for } s \geq \varepsilon/2 \end{cases}\]
\end{construction}

\begin{remark}
Note that the minimum of the set $\left\{ s > 0: y(\partial B_s(\alpha)) \cap y_k(\partial B_s(\alpha+ \varepsilon \unit_1)) \neq \emptyset \right\}$ needed in Construction \ref{const-inner}, can be found since the set {is closed and bounded from below}. In our arguments we will also sometimes make use of the equivalent characterizations
\begin{align*}
r &= \min\big\{ s > 0: \overline{y(B_s(\alpha))} \cap \overline{y_k(B_s(\alpha+ \varepsilon \unit_1))} \neq \emptyset \big\},\\
r &= \max\big\{ s > 0: y(B_s(\alpha)) \cap y_k(B_s(\alpha+\varepsilon \unit_1)) = \emptyset \big\}.
\end{align*}
\end{remark}

The function $\omega_\alpha$ defined in Construction \ref{const-inner} seems to be a promising candidate for the sought ``bridge function'', i.e.\@ a definition of $\omega$ on $G$. However, as defined, $\omega_\alpha$, will not necessarily be quasisymmetric which is essential for Lemma \ref{lemma-extension}. Nevertheless, it will turn out in Lemma \ref{lemma-quasicircle} below that, in fact, the image of $\omega_\alpha$ is at least a segment of a quasicircle and so the sought bridge on the segment  $[\alpha,\alpha + \varepsilon \unit_1] \subset \Omega_{{mid}}$ will be a reparametrization of $\omega_\alpha$.

  %\[r := \max\left\{ s > 0: y(B_s(\alpha)) \cap y_k(B_s(\alpha+\varepsilon \unit_1)) = \emptyset \right\}.\]
  %Then since both $y$ and $y_k$ are homeomorphisms, $\overline{y(B_r(\alpha))} = y(\overline{B_r(\alpha)})$ and $\overline{y_k(B_r(\alpha+\varepsilon \unit_1))} = y_k(\overline{B_r(\alpha+\varepsilon \unit_1)})$ overlap. So there is a point $x_0\in y(\partial B_r(\alpha)) \cap y(\partial B_r(\alpha+\varepsilon \unit_1))$.
  
  %Now let $\phi_1$ and $\phi_2$ be affine maps, such that $\phi_1(\alpha+\varepsilon/2 \unit_1) = y^{-1}(x)$ and $\phi_2(\alpha+\varepsilon/2 \unit_1) = y_k^{-1}(x_0)$, and such that $\phi_1(\alpha+s \unit_2) = \alpha+s \unit_2$ and $\phi_2(\alpha+\varepsilon \unit_1+s \unit_2) = \alpha+\varepsilon \unit_1+s \unit_2$ $\forall s\in\R$ (so that especialy have $\phi_1(\alpha) = \alpha$ and $\phi_2(\alpha+\varepsilon \unit_1) = \alpha + \varepsilon \unit_1$). 

\begin{figure}
\center{\includegraphics[width=0.5\paperwidth]{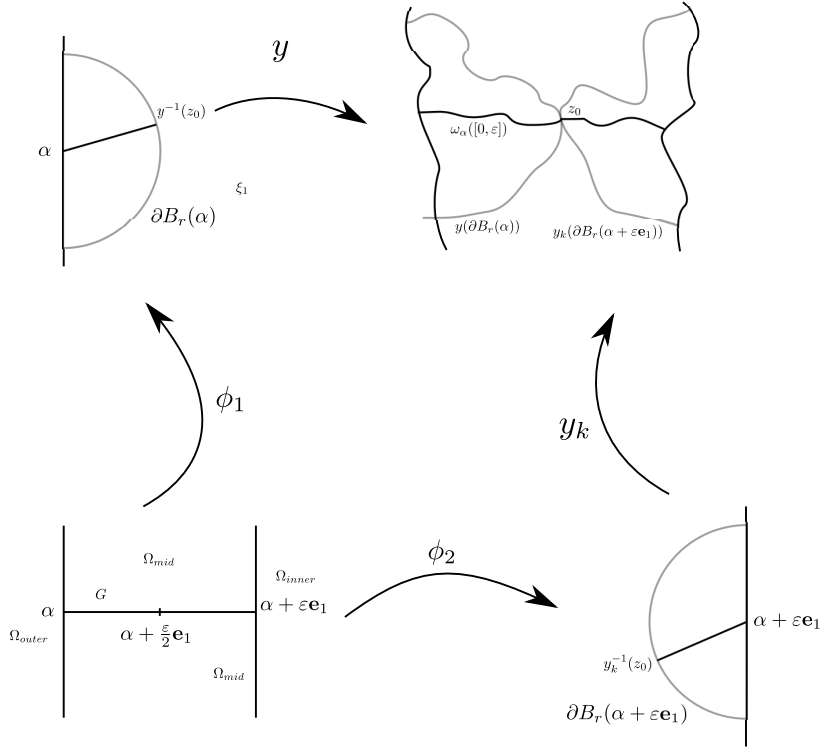}}
\caption{Connecting $\Omega_{outer}$ and $\Omega_{inner}$ on $G$ (deformations are exaggerated in scale)}
\label{fig-bridge}
\end{figure}

Before proceeding, we will show the well-definedness of Construction \ref{const-inner} as well as some bounds on the various quantities involved.
\begin{lemma} \label{lemma-constructionProperties}
 Let $\delta$ satisfy \eqref{delta_choice} as well as $\delta < \frac{\varepsilon}{5}$. Let $y, y_k$ be as in Proposition \ref{prop-cutOff} and fulfill \eqref{needs-CutOff}. Then the quantities found in Construction \ref{const-inner} satisfy the following:
  \begin{enumerate}
    \item $\frac{\varepsilon}{2}-\delta/2 < r < \frac{\varepsilon}{2}+ \delta/2$ 
    \item $\abs{y^{-1}(z_0)- (\alpha +\frac{\varepsilon}{2}\unit_1 )} < \sqrt{7\varepsilon \delta}$ and $\abs{y_k^{-1}(z_0)- (\alpha+\frac{\varepsilon}{2}\unit_1 )} < \sqrt{7\varepsilon \delta}$, 
    \item The functions $\phi_i$ are bi-Lipschitz with constants $L_{\phi_i}< 1/(1-\frac{\delta}{\varepsilon})$, $i=1,2$.
  \item The function $\omega_\alpha$ is well-defined, injective and $\omega_\alpha((0,\varepsilon))$ does neither intersect $y(\Omega_{outer})$, $y_k(\Omega_{inner})$ nor the images of similarly constructed $\omega_\beta$ on any of the other edges in $G$.
  \item The function $\omega_\alpha$ is $\tilde{\eta}$-quasisymmetric in $[0, \varepsilon/2]$ as well $[\varepsilon/2, \varepsilon]$ with $\tilde{\eta}$ only dependent on $\eta$.
  \end{enumerate}
\end{lemma}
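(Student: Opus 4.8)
The plan is to prove the five items of Lemma~\ref{lemma-constructionProperties} essentially in the order stated, since each relies on the previous ones.

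\textbf{Items 1 and 2: location of $r$ and $z_0$.}
First I would estimate $r$ from above and below using the hypothesis \eqref{needs-CutOff}. For the upper bound, observe that $\abs{\alpha-(\alpha+\varepsilon\unit_1)}=\varepsilon$, so the balls $B_s(\alpha)$ and $B_s(\alpha+\varepsilon\unit_1)$ already touch at $s=\varepsilon/2$; applying $y$ and $y_k$ respectively and using $\|y-y_k\|_\infty\le\delta$ on the relevant balls, one sees that $\overline{y(B_{s}(\alpha))}$ and $\overline{y_k(B_{s}(\alpha+\varepsilon\unit_1))}$ must already meet once $s$ exceeds $\varepsilon/2$ by a little more than $\delta/2$ — because the midpoint $\alpha+\tfrac\varepsilon2\unit_1$ lies on both circles $\partial B_{\varepsilon/2}(\alpha)$ and $\partial B_{\varepsilon/2}(\alpha+\varepsilon\unit_1)$, its images under $y$ and $y_k$ are within $\delta$ of each other, and growing $s$ by $\delta/2$ grows each image set by at least $\delta/2$ in a suitable sense (here I would invoke continuity/openness of $y,y_k$ rather than a Lipschitz bound, or argue via the equivalent characterizations of $r$ in the preceding Remark). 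For the lower bound one argues symmetrically: if $s<\varepsilon/2-\delta/2$ the two closed image sets are still disjoint. This gives item~1. For item~2, note $z_0=y(p)=y_k(q)$ with $p\in\partial B_r(\alpha)$, $q\in\partial B_r(\alpha+\varepsilon\unit_1)$; then $\abs{p-(\alpha+\tfrac\varepsilon2\unit_1)}\le\abs{p-\alpha}-\tfrac\varepsilon2+(\text{transverse part})$, and a direct geometric computation (two circles of radius $r\in(\tfrac\varepsilon2-\tfrac\delta2,\tfrac\varepsilon2+\tfrac\delta2)$ centered a distance $\varepsilon$ apart, intersected) bounds the transverse displacement by $O(\sqrt{\varepsilon\delta})$; combining with $\abs{p-q}\le\abs{y^{-1}(z_0)-y_k^{-1}(z_0)}\le\delta$ via \eqref{needs-CutOff} yields the stated $\sqrt{7\varepsilon\delta}$ bound (the constant $7$ is just bookkeeping once one uses $\delta<\varepsilon/5$).

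\textbf{Item 3: bi-Lipschitz bounds on $\phi_i$.}
The $\phi_i$ are affine; $\phi_1$ sends the segment $[\alpha,\alpha+\tfrac\varepsilon2\unit_1]$ to $[\alpha,y^{-1}(z_0)]$ linearly and is the identity in the $\unit_2$-direction, so its linear part has one singular value $1$ and the other equal to $\tfrac{2}{\varepsilon}\abs{y^{-1}(z_0)-\alpha}$. By item~2, $\abs{y^{-1}(z_0)-\alpha}\le \tfrac\varepsilon2+\sqrt{7\varepsilon\delta}$, and $\ge\tfrac\varepsilon2-\sqrt{7\varepsilon\delta}$, so both singular values lie in $[1-\tfrac{2}{\varepsilon}\sqrt{7\varepsilon\delta},\,1+\tfrac{2}{\varepsilon}\sqrt{7\varepsilon\delta}]$; since $\delta<\varepsilon/5$ this is contained in a controlled interval and in particular the Lipschitz constant of $\phi_i$ and of $\phi_i^{-1}$ is $<1/(1-\tfrac\delta\varepsilon)$ after absorbing the $\sqrt{}$ into the linear bound (a slightly crude but valid estimate; one chooses $\delta$ small enough relative to $\varepsilon$ at the outset). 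The same computation applies to $\phi_2$ by symmetry.

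\textbf{Items 4 and 5: injectivity/non-interference and quasisymmetry of $\omega_\alpha$.}
For item~5: on $[0,\varepsilon/2]$ we have $\omega_\alpha(\alpha+s\unit_1)=y(\phi_1(\alpha+s\unit_1))$, a composition of the bi-Lipschitz affine map $\phi_1$ (item~3) with the restriction of the $K$-quasiconformal map $y$ to a ball on which it is $\eta$-quasisymmetric (this is exactly why $\gamma$ was chosen as in Construction~\ref{const-outer}, cf. Remark~\ref{remark-quasisym}). A bi-Lipschitz map with constant $L$ is $\eta_L$-quasisymmetric with $\eta_L(t)=L^2 t$, and compositions of quasisymmetric maps are quasisymmetric with modulus depending only on the two moduli; hence $\omega_\alpha|_{[0,\varepsilon/2]}$ is $\tilde\eta$-quasisymmetric with $\tilde\eta$ depending only on $\eta$ (and on the universal bound for $L_{\phi_1}$, which is itself controlled once $\delta<\varepsilon/5$). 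Same on $[\varepsilon/2,\varepsilon]$. For item~4: $\omega_\alpha$ is continuous at $s=\varepsilon/2$ because $\phi_1(\alpha+\tfrac\varepsilon2\unit_1)=y^{-1}(z_0)$ and $\phi_2(\alpha+\tfrac\varepsilon2\unit_1)=y_k^{-1}(z_0)$ force both branches to equal $z_0$; injectivity on each half follows from injectivity of $y$ (resp. $y_k$) and of $\phi_i$, and the two image halves meet only at $z_0$ because the first half lies in $y(\overline{B_r(\alpha)})$ and the second in $y_k(\overline{B_r(\alpha+\varepsilon\unit_1)})$, whose intersection is forced by the minimality in the definition of $r$ (second equivalent characterization $r=\max\{s:y(B_s(\alpha))\cap y_k(B_s(\alpha+\varepsilon\unit_1))=\emptyset\}$) to be as small as possible near $z_0$. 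Non-intersection with $y(\Omega_{outer})$: the image $\omega_\alpha((0,\varepsilon/2))\subset y(B_r(\alpha))$ and one checks $B_r(\alpha)$ does not meet $\Omega_{outer}$ using item~1 ($r<\varepsilon/2+\delta/2<\varepsilon$, $\delta<\varepsilon/5$) and the fact that $\alpha\in\partial\Omega_{outer}$ with $\Omega_{outer}$ a union of closed $\varepsilon$-squares; symmetrically for $y_k(\Omega_{inner})$; and disjointness from other bridges $\omega_\beta$ follows because the balls $B_r(\alpha)$ for distinct grid vertices $\alpha$ are disjoint (again since $r<\varepsilon/2+\delta/2$ and grid vertices are $\varepsilon$-separated), and the images under the injective maps $y,y_k$ of disjoint sets are disjoint.

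\textbf{Main obstacle.}
I expect the genuinely delicate point to be item~2 — getting the $O(\sqrt{\varepsilon\delta})$ (rather than $O(\delta)$) bound with an honest constant. The square-root loss is intrinsic: a point on $\partial B_{\varepsilon/2}(\alpha)$ can be $\delta$-close to $\partial B_{\varepsilon/2}(\alpha+\varepsilon\unit_1)$ while being $\Theta(\sqrt{\varepsilon\delta})$ away from the midpoint, because the two circles are tangent there. Extracting the constant $7$ cleanly requires carefully combining this planar-geometry estimate with the estimate $r\in(\tfrac\varepsilon2-\tfrac\delta2,\tfrac\varepsilon2+\tfrac\delta2)$ from item~1 and the $\delta$-closeness of $y^{-1}(z_0)$ and $y_k^{-1}(z_0)$ from \eqref{needs-CutOff}, and then propagating the resulting crude-but-uniform control through items~3--5. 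Everything downstream of item~2 is then a matter of bookkeeping with quasisymmetry moduli and the covering geometry.
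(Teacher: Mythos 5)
Your overall route matches the paper's for items 1, 2, 3 and 5 (triangle inequality and a lens-shaped intersection of two disks for the location of $r$ and $z_0$, an explicit computation of the affine maps' singular values, and composition of quasisymmetric with bi-Lipschitz maps), but item 4 contains a genuine gap. Both of the justifications you offer for the non-interference claims are false as stated: first, $\alpha$ lies on the boundary of $\Omega_{outer}$ (it is the endpoint of the edge that sits in $\overline{\Omega_{outer}}$), so $B_r(\alpha)$ with $r\approx\varepsilon/2$ certainly meets $\Omega_{outer}$, and hence so does $y(B_r(\alpha))$ meet $y(\Omega_{outer})$; second, adjacent edges of $G$ share a vertex, so the balls $B_r(\alpha)$ used by neighbouring bridges are not disjoint (and even for distinct adjacent vertices the radii sum to $\approx\varepsilon+\delta>\varepsilon$). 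The ingredient you are missing is precisely what item 2 is for: it shows that the preimage arcs of the bridge, namely the segments $[\alpha,y^{-1}(z_0)]$ and $[y_k^{-1}(z_0),\alpha+\varepsilon\unit_1]$, lie in \emph{narrow cones} with tips at $\alpha$ and $\alpha+\varepsilon\unit_1$, opened towards the midpoint of the edge, with opening angle of order $\arccos\bigl(\tfrac{\varepsilon}{\varepsilon+3\delta}\bigr)$. It is this cone localization (not ball disjointness) that keeps $\omega_\alpha((0,\varepsilon))$ away from $y(\Omega_{outer})$, $y_k(\Omega_{inner})$ and the other bridges; without it the argument does not close.

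Two secondary points. In item 1 your upper bound rests on the assertion that ``growing $s$ by $\delta/2$ grows each image set by at least $\delta/2$''; there is no such lower bound on the growth of the image of a ball under a quasiconformal map. The correct mechanism is the second half of \eqref{needs-CutOff}: evaluate at the witness point $\tilde z=y\bigl(\alpha+\tfrac{\varepsilon+\delta}{2}\unit_1\bigr)$ and use $\abs{y^{-1}(\tilde z)-y_k^{-1}(\tilde z)}<\delta$ to place $y_k^{-1}(\tilde z)$ inside $\overline{B_{\varepsilon/2+\delta/2}(\alpha+\varepsilon\unit_1)}$, which gives $r\le\varepsilon/2+\delta/2$ directly. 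In item 3, routing the eigenvalue bound through the $\sqrt{7\varepsilon\delta}$ estimate of item 2 cannot be ``absorbed'' into $1/(1-\delta/\varepsilon)$: the perturbation $2\sqrt{7\delta/\varepsilon}$ dominates $\delta/\varepsilon$ for every admissible $\delta$, and under the stated hypothesis $\delta<\varepsilon/5$ it is not even smaller than $1$. The intended estimate uses $\abs{y^{-1}(z_0)-\alpha}=r<\varepsilon/2+\delta/2$ from item 1; your version can be salvaged only by imposing a much stronger smallness condition on $\delta$, which then propagates into the constants used later (e.g.\@ the bi-Lipschitz bound on $\phi_1$ invoked in the quasicircle lemma).
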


\begin{proof} \mbox{}

  \begin{enumerate} \item Let us look at the bounds for $r$ first. We know that $\abs{y^{-1}(z_0)-y_k^{-1}(z_0)} < \delta$ but $y^{-1}(z_0)$ and $y_k^{-1}(z_0)$ lie on a circle of equal radius centered at $\alpha$ and $\alpha + \varepsilon \unit_1$, respectively. Therefore, it has to hold that $r > \varepsilon/2-\delta/2$. 
  
 For the upper bound, consider the point $\tilde{z}:=y\left(\alpha + \frac{\varepsilon+\delta}{2}\unit_1\right)$ in the image. We know that
  \[\abs{\left(\alpha + \frac{\varepsilon+\delta}{2}\unit_1\right) - y_k^{-1}(\tilde{z})} = \abs{y^{-1}(\tilde{z}) - y_k^{-1}(\tilde{z})}< \delta.\]
  But then $y_k^{-1}(\tilde{z}) \in B_\delta\left(\alpha + \frac{\varepsilon+\delta}{2}\unit_1\right) \subset \overline{B_{\varepsilon/2+\delta/2}(\alpha + \varepsilon \unit_1)}$, so $\tilde{z} \in \overline{y_k(B_{\varepsilon/2+\delta/2}(\alpha + \varepsilon \unit_1))} \cap \overline{y(B_{\varepsilon/2+\delta/2}(\alpha))}$. So this means that $\varepsilon/2+\delta/2$ is an upper bound for $r$.
  
  \item Using the estimates for $r$, we can now bound the distances $\abs{y^{-1}(z_0)- \left(\alpha +\frac{\varepsilon}{2}\unit_1 \right)}$ and $\abs{y_k^{-1}(z_0)- \left(\alpha+\frac{\varepsilon}{2}\unit_1 \right)}$; due to symmetry of the two we shall just show the latter. Again, we start with $\abs{y^{-1}(z_0)-y_k^{-1}(z_0)} < \delta$  and so, using the upper bound on $r$, we have on one hand
  \[\abs{\alpha-y_k^{-1}(z_0)} \leq \abs{\alpha - y^{-1}(z_0)} + \abs{y^{-1}(z_0)-y_k^{-1}(z_0)} < r+\delta < \frac{\varepsilon}{2}+\frac{3\delta}{2},\]
  and on the other hand 
  $$\abs{(\alpha+\varepsilon \unit_1)-y_k^{-1}({z_0})} = r < \frac{\varepsilon}{2}+\frac{\delta}{2} \leq \frac{\varepsilon}{2}+\frac{3\delta}{2}.$$
  So $y_k^{-1}({z_0})$ and $\alpha + \frac{\varepsilon}{2}\unit_1$ are both inside the ``lens-like'' intersection of the two circles $B_{\varepsilon/2+3/2\delta}(\alpha) \cap B_{\varepsilon/2+3/2\delta}(\alpha + \varepsilon \unit_1)$ (cf. Figure \ref{figure-lens}), whose diameter is bounded by $\sqrt{7 \varepsilon \delta}$. 
  
  \item It is easy to calculate the gradient of $\phi_1$:
  \[D\phi_1 = \left(\frac{2}{\varepsilon}y^{-1}(z_0)-\alpha \middle| \unit_2\right),\]
  which has eigenvalues $1$ and $\frac{2}{\varepsilon}(y^{-1}(z_0)-\alpha)_1$, so 
  $$L_{\phi_1} = \max\left\{\frac{2}{\varepsilon}(y^{-1}(z_0)-\alpha)_1, \frac{1}{\frac{2}{\varepsilon}(y^{-1}(z_0)-\alpha)_1}\right\} \leq \max\left\{1+\frac{\delta}{\varepsilon},\frac{1}{1-\frac{\delta}{\varepsilon}}\right\} = \frac{1}{1-\frac{\delta}{\varepsilon}}.$$ 
  The same works for $\phi_2$. Also note that $\delta < \frac{\varepsilon}{5}$ implies $L_{\phi_i} < \frac{5}{4}$.
  
  \item By the bounds from 1., we know that $\omega_\alpha$ is well-defined. Now injectivity of $\omega_\alpha$ restricted to each of the intervals $(0,\varepsilon/2)$ and $(\varepsilon/2,\varepsilon)$ results directly from the fact that all of the constituent functions $y,y_k,\phi_1$ and $\phi_2$ are injective. Furthermore we have that $\omega_\alpha((0,\varepsilon/2)) \subset y(B_r(\alpha))$ and $\omega_\alpha((\varepsilon/2,\varepsilon)) \subset y_k(B_r(\alpha+\varepsilon \unit_1))$. But since $y(B_r(\alpha)) \cap y_k(B_r(\alpha+\varepsilon \unit_1)) = \emptyset$ this implies injectivity of $\omega_\alpha$ on all of $(0,\varepsilon)$. 
  
  By our bounds on the positions of $y^{-1}(z_0)$ and $y_k^{-1}(z_0)$ we have also shown that $\omega_\alpha({(0,\varepsilon)}) \subset y(C_1\cap B_r(\alpha)) \cup y_k(C_2\cap B_r(\alpha + \varepsilon \unit_1))$ where $C_1$ and $C_2$ are narrow cones with tips in $\alpha$ and $\alpha + \varepsilon \unit_1$ respectively, opened towards $\alpha+\frac{\varepsilon}{2} \unit_1$ with opening angle less than $\arccos\left(\frac{\varepsilon}{\varepsilon + 3\delta}\right)$; cf.\@ also Figure \ref{figure-lens}. It is easy to see that therefore $\omega_\alpha((0,\varepsilon))$ does neither intersect $y(\Omega_{outer})$, $y_k(\Omega_{inner})$ nor $\omega_\beta$ on any of the other intervals of $G$.
  \item On each of the intervals $[0,\varepsilon/2]$ as well as $[\varepsilon/2, \varepsilon]$ the function $\omega_\alpha$ is a composition of the $\eta$-quasisymmetric functions $y$ and $y_k$ with the $b$-bi-Lipschitz functions $\phi_1$ and $\phi_2$, respectively. This yields the claim.
  \end{enumerate}
  
\end{proof}

\begin{figure}
\begin{center}
\includegraphics[width = 0.3 \paperwidth]{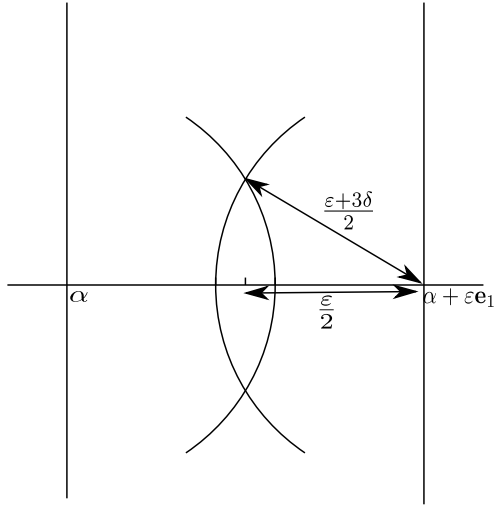}
\caption{Illustration for the proof of Lemma \ref{lemma-constructionProperties}-item 2}
\label{figure-lens}
\end{center}
\end{figure}

\begin{lemma} \label{lemma-quasicircle}
 Let $\delta$ satisfy \eqref{delta_choice} as well as $\delta < \frac{\varepsilon}{5}$. Let $y, y_k$ be as in Proposition \ref{prop-cutOff} and fulfill \eqref{needs-CutOff}. Then the image of $\omega_\alpha$ is a segment of a $\tilde{K}$-quasicircle where $\tilde{K}$ depends only on $K$.
\end{lemma}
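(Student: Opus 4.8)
The plan is to prove that the (open) arc $\Gamma:=\omega_\alpha((0,\varepsilon))$ obeys the three‑point/bounded‑turning estimate \eqref{Arc-condition} with a constant depending only on $K$; since \eqref{Arc-condition} is precisely the criterion appearing in Lemma \ref{lemma-astala-quasicircle}, this shows (after completing $\Gamma$ to a closed curve satisfying the same estimate, e.g.\ by joining its endpoints along a circular arc) that $\Gamma$ is a segment of a $\tilde K$‑quasicircle with $\tilde K=\tilde K(K)$. Write $\Gamma=\Gamma_1\cup\Gamma_2$ with $\Gamma_1:=\omega_\alpha((0,\varepsilon/2])$ and $\Gamma_2:=\omega_\alpha([\varepsilon/2,\varepsilon))$; by injectivity of $\omega_\alpha$ (Lemma \ref{lemma-constructionProperties}, item 4) they meet only at $z_0=\omega_\alpha(\varepsilon/2)$. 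By Lemma \ref{lemma-constructionProperties}, item 5, each $\Gamma_i$ is the image of an interval under a $\tilde\eta$‑quasisymmetric map, and a short consequence of the quasisymmetry inequality (compare the three parameters with the outer endpoint) is that each $\Gamma_i$ has \emph{bounded turning}: there is $c_0=c_0(\tilde\eta)=c_0(K)$ with $\mathrm{diam}\,\Gamma_i[p,q]\le c_0\abs{p-q}$ for all $p,q\in\Gamma_i$, where $\Gamma_i[p,q]$ is the subarc between $p$ and $q$.

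Everything then reduces to one estimate at the junction. For $z_1,z_2\in\Gamma$ and $z_3$ on the subarc between them: if $z_1,z_2$ lie on the same $\Gamma_i$, bounded turning of $\Gamma_i$ already gives $\abs{z_1-z_3}+\abs{z_2-z_3}\le 2\,\mathrm{diam}\,\Gamma_i[z_1,z_2]\le 2c_0\abs{z_1-z_2}$; otherwise, say $z_1\in\Gamma_1$ and $z_2\in\Gamma_2$, the subarc between them passes through $z_0$, and since $\Gamma_1[z_1,z_0]$ and $\Gamma_2[z_0,z_2]$ share the point $z_0$,
\[
\mathrm{diam}\big(\Gamma[z_1,z_2]\big)\le\mathrm{diam}\,\Gamma_1[z_1,z_0]+\mathrm{diam}\,\Gamma_2[z_0,z_2]\le c_0\big(\abs{z_1-z_0}+\abs{z_2-z_0}\big).
\]
Hence it suffices to establish the ``bounded opening at $z_0$'' inequality $\abs{z_1-z_0}+\abs{z_2-z_0}\le C\abs{z_1-z_2}$ for all $z_1\in\Gamma_1$, $z_2\in\Gamma_2$, with $C=C(K)$; then $\abs{z_1-z_3}+\abs{z_2-z_3}\le 2\,\mathrm{diam}\,\Gamma[z_1,z_2]\le 2c_0C\abs{z_1-z_2}$.

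For this crux inequality, set $D_1:=y(B_r(\alpha))$ and $D_2:=y_k(B_r(\alpha+\varepsilon\unit_1))$. The maximality characterization of $r$ stated after Construction \ref{const-inner} says precisely that $D_1\cap D_2=\emptyset$, while by construction $z_0\in\partial D_1\cap\partial D_2$ and $\Gamma_1$ consists of $y$‑images of points on the radius $[\alpha,y^{-1}(z_0)]\subset\overline{B_r(\alpha)}$ (likewise $\Gamma_2$ with $y_k$ and $[\alpha+\varepsilon\unit_1,y_k^{-1}(z_0)]$), so $\Gamma_i\subset\overline{D_i}$. The key claim is that each $z\in\Gamma_1$ lies deep inside $D_1$ at scale $\abs{z-z_0}$, i.e.
\[
\mathrm{dist}(z,\partial D_1)\ge c_1\abs{z-z_0},\qquad c_1=c_1(K)>0,
\]
and symmetrically for $\Gamma_2$, $D_2$. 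Granting this, take $z_1\in\Gamma_1$, $z_2\in\Gamma_2$ with, say, $\abs{z_1-z_0}\ge\abs{z_2-z_0}$ (the reverse case uses the symmetric claim); disjointness gives $\overline{D_2}\subset\R^2\setminus D_1$, whence $\abs{z_1-z_2}\ge\mathrm{dist}(z_1,\overline{D_2})\ge\mathrm{dist}(z_1,\partial D_1)\ge c_1\abs{z_1-z_0}\ge\tfrac{c_1}{2}(\abs{z_1-z_0}+\abs{z_2-z_0})$, so $C=2/c_1$. To prove the deep‑point claim, use (Lemma \ref{lemma-locQuasi}; the relevant balls about $\alpha$ lie well inside $\Omega$ by construction, cf.\ Remark \ref{remark-quasisym}) that $y$ is $\eta$‑quasisymmetric on a ball $B_R(\alpha)$ with $R>r$, $\eta=\eta(K)$. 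Writing $z=y(w)$ with $w$ on the radius $[\alpha,p]$, $p:=y^{-1}(z_0)\in\partial B_r(\alpha)$, one has $\abs{w-q}\ge\abs{w-p}$ for every $q\in\partial B_r(\alpha)$ (since $|\alpha-q|=|\alpha-p|=r$ and $w$ lies on $[\alpha,p]$), so by quasisymmetry and monotonicity of $\eta$
\[
\frac{\abs{z-z_0}}{\abs{z-y(q)}}=\frac{\abs{y(w)-y(p)}}{\abs{y(w)-y(q)}}\le\eta\!\left(\frac{\abs{w-p}}{\abs{w-q}}\right)\le\eta(1)\qquad\text{for all }q\in\partial B_r(\alpha);
\]
taking the infimum over $q$ and using $\partial D_1=y(\partial B_r(\alpha))$ gives $\mathrm{dist}(z,\partial D_1)\ge\eta(1)^{-1}\abs{z-z_0}$, and the same computation with $y_k$ settles $\Gamma_2$.

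I expect the genuine obstacle to be exactly the crux inequality (equivalently the deep‑point estimate): two bounded‑turning arcs sharing an endpoint and lying in disjoint regions need \emph{not} glue into a bounded‑turning arc, so one must exploit the specific geometry here — that each half of $\omega_\alpha$ enters its quasidisk $D_i$ \emph{non‑tangentially}, being the $y$‑ (resp.\ $y_k$‑)image of a straight radius, and that $D_1,D_2$ are disjoint with $z_0$ a common boundary point. The remaining ingredients (quasisymmetry and injectivity of the two halves, and the passage from \eqref{Arc-condition} to ``segment of a quasicircle'') are already supplied by Lemmas \ref{lemma-constructionProperties} and \ref{lemma-astala-quasicircle}.
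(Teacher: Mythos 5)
Your proof is correct and follows essentially the same route as the paper's: both verify the three-point condition \eqref{Arc-condition} by splitting $\omega_\alpha$ at the junction $z_0=\omega_\alpha(\varepsilon/2)$, using quasisymmetry of each half for the bounded-turning estimates, and exploiting the disjointness of $y(B_r(\alpha))$ and $y_k(B_r(\alpha+\varepsilon\unit_1))$ together with the fact that each half travels along the image of a radius to show $\abs{z_1-z_2}\gtrsim\abs{z_1-z_0}+\abs{z_2-z_0}$ across the junction. Your packaging of this last step as an explicit ``deep-point'' estimate $\mathrm{dist}(z,\partial D_i)\ge\eta(1)^{-1}\abs{z-z_0}$ is a cleaner formulation of what the paper does with the auxiliary points $\xi_1,\xi_3$ on the chord, but it is the same idea.
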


\begin{proof}
  To show this, we will verify the classical arc condition \eqref{Arc-condition}. Take $z_1, z_2, z_3 \in \omega_\alpha((0,\varepsilon))$ and denote $x_i = \omega_\alpha^{-1}(z_i)$. We can immediately assume that $x_1 < x_2 < x_3$. If $\varepsilon/2 < x_1$ or $x_3 < \varepsilon/2$, the result is trivial since we are in the image of $y$ or $y_k$ respectively.
  
   Now, we consider the case when $x_1 < x_2 < \varepsilon/2 < x_3$. We look at the connecting line between $z_1$ and $z_3$ and on this line we find the points $\xi_1$ and $\xi_3$ satisfying 
   $$\abs{y^{-1}(z_1)-y^{-1}(\xi_1)} = \abs{\phi_1({\alpha + x_1\unit_1})-\phi_1({\alpha + \varepsilon/2 \unit_1})} $$
   and$$\abs{y_k^{-1}(z_3)-y_k^{-1}(\xi_3)} = \abs{\phi_2({\alpha + x_3 \unit_1})-\phi_2({\alpha+\varepsilon/2\unit_1})},$$ respectively.
   
 Then, directly from Construction \ref{const-inner}, we conclude that 
 \[y(B_{\abs{\phi_1({\alpha + x_1\unit_1})-\phi_1({\alpha + \varepsilon/2 \unit_1})}}(\phi_1(x_1))) \cap y_k (B_{\abs{\phi_2({\alpha + x_3\unit_1})-\phi_2({\alpha + \varepsilon/2 \unit_1})}}(\phi_2(x_3))) = \emptyset\]
 and hence $\abs{z_1-z_3} \geq \abs{z_1-\xi_1} + \abs{z_3-\xi_3}$. 
 
 Furthermore, since $y$ and $y_k$ are locally quasisymmetric, we have 
$$
\abs{z_i-\xi_i} {\geq} \eta(1)^{-1} \abs{z_i-\omega_\alpha(\varepsilon/2)} \qquad i=1,3.
$$
Also, since $\omega_\alpha(\varepsilon/2)=y(\phi_1({\alpha + \varepsilon/2 \unit_1}))$, $z_2 = y(\phi_1({\alpha + x_2\unit_1}))$ and $z_1=y(\phi_1({\alpha + x_1\unit_1}))$ lie on image of the quasisymmetric function ${ x\mapsto }y \circ \phi_1{(\alpha+x \unit_1)}$ of the interval $[0,\varepsilon/2]$, we get directly from the quasisymmetry property that (here we also use that the bi-Lipschitz constant of $\phi_1$ is bounded by 5)
{\begin{align*} 
& \frac{\abs{z_2-\omega_\alpha(\varepsilon/2)} + \abs{z_2-z_1}}{|z_1 - \omega_\alpha(\varepsilon/2)|} \\
=& \eta\left(\frac{\abs{\phi_1(\alpha+x_2\unit_1)-\phi_1(\alpha+\varepsilon/2\unit_1)}}{\abs{\phi_1(\alpha+x_1\unit_1)-\phi_1(\alpha+\varepsilon/2\unit_1)}} \right) + \eta\left(\frac{\abs{\phi_1(\alpha+x_2\unit_1)-\phi_1(\alpha+x_1\unit_1)}}{\abs{\phi_1(\alpha+x_1\unit_1)-\phi_1(\alpha+\varepsilon/2\unit_1)}} \right) \\ 
\leq& \eta\left(25\frac{\abs{x_2-\varepsilon/2}}{\abs{x_1-\varepsilon/2}} \right) + \eta\left(25\frac{\abs{x_2-x_1}}{\abs{x_1-\varepsilon/2}} \right)\\
\leq& 2 \eta(25)
\end{align*} }
Summing all up,
  {\begin{align*}
    \abs{z_1-z_3} &\geq \eta^{-1}(1) \left(\abs{z_1-\omega_\alpha(\varepsilon/2)} +  \abs{z_3-\omega_\alpha(\varepsilon/2) }\right) \\
    &\geq (2\eta(1) \eta(25))^{-1} \left( \abs{z_1-z_2} + \abs{z_2-\omega_\alpha(\varepsilon/2)} + \abs{z_3-\omega_\alpha(\varepsilon/2)} \right) \\
    &\geq  (2 \eta(1) \eta(25))^{-1} \left(\abs{z_1-z_2} + \abs{z_2-z_3} \right),
  \end{align*}
where we used the fact that $(2\eta(25))^{-1} \leq 1$ (which is true since $\eta$ is increasing and $\eta(1) \geq \frac{\abs{z_1-z_2}}{\abs{z_1-z_2}} = 1$) to get the constant in front of all terms in the second line and the triangle inequality in the last line.}
  
  The final case $x_1 < \varepsilon/2 < x-_2 < x_3$ follows from symmetry.
\end{proof}

\begin{remark}
By arguments similar to those used in the previous proof, we can show that if the functions $y$, $y_k$ were bi-H\"{o}lder continuous, so will be $\omega_\alpha$, even with the same exponent. In particular, this applies to the case studied here (recall that quasiconformal maps are locally H\"older continuous (cf. \cite[Corollary 3.10.3]{AstalaIwaniec}).
\end{remark}

We know from Lemma \ref{lemma-quasicircle}  that there exists a quasisymmetric parameterization of the image of $\omega_\alpha$. However, what we actually need is not just some parametrization, but a parametrization that is still quasisymmetric when connected to the image of $\Omega_{outer}$ under $y$ as well as $\Omega_{inner}$ under $y_k$. 

First, we realize why the function $\omega_\alpha$ itself (which does connect in a quasisymmetric way to the images of $\Omega_\mathrm{inner}$ and $\Omega_\mathrm{outer}$) does not need to be quasisymmetric across the  ``meeting point'' $\varepsilon/2$. As noted, $\omega_\alpha$ is at least bi-Hölder-continuous, which shows that it cannot form a too sharp angle at $\varepsilon/2$. Yet, this does not exclude the possibility that both parts of $\omega_\alpha$ approach the meeting point  with ``different speeds''. To be more precise, while the bi-Hölder property shows that $\omega_\alpha(\varepsilon/2+t) - \omega_\alpha(\varepsilon/2)$ and $\omega_\alpha(\varepsilon/2) - \omega_\alpha(\varepsilon/2-t)$ are roughly co-linear for small $t$, we have no bounds on the quotient
\[\frac{\abs{\omega_\alpha(\varepsilon/2+t) - \omega_\alpha(\varepsilon/2)}}{\abs{\omega_\alpha(\varepsilon/2) - \omega_\alpha(\varepsilon/2-t)}}\]

To fix this issue, we will perform yet another (slight) modification of the construction: We will re-parametrize $\omega_\alpha$  around the meeting point $\varepsilon/2$ but keep the original parametrization close to $0$ and $\varepsilon$ in order not to run into the same kind of problems when transitioning to $y$ and $y_k$ at the endpoints of the interval. This requires a slow passage from one parametrisation to another without endangering the quasi-symmetry. Nevertheless, finding such a re-parametrization is a one-dimensional problem, which we are able to solve explicitly:

\begin{lemma}\label{lemma-1d}
Let $s:[0,a]\to [0,b]$ be an increasing, $\eta$-quasisymmetric homeomorphism. Then there exists an homeomorphism $\tilde{s}:[0,a] \mapsto [0,\ell]$, $\ell < {(3/2)}b$ such that $\tilde{s}|_{[0,a/4]} = s|_{[0,a/4]}$, $\tilde{s}'|_{[3/4a,a]} = b/a$ and $\tilde{s}$ is $\tilde{\eta}$-quasisymmetric, where $\tilde{\eta}$ is only dependent on $\eta$.
\end{lemma}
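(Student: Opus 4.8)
The plan is to argue as follows. First I would rescale the domain to reduce to $a=1$, so that $s\colon[0,1]\to[0,b]$ is an increasing $\eta$-quasisymmetric homeomorphism. A preliminary step records the standard consequences of $\eta$-quasisymmetry that will be needed: $s(1/4)$, $s(3/4)$, $s(3/4)-s(1/4)$ and $b-s(1/4)$ are all comparable to $b$ with constants depending only on $\eta$, and the restriction of $s$ to any subinterval is again quasisymmetric with a modulus controlled by $\eta$. It is also convenient to invoke the quantitative equivalence, valid for increasing homeomorphisms of an interval, between $\eta$-quasisymmetry and the two-sided ``weak'' bound $\tfrac1M\le\tfrac{g(x+t)-g(x)}{g(x)-g(x-t)}\le M$ on symmetric triples; since this equivalence is quantitative, it suffices to verify the weak bound for the function we construct, which is far easier to check on a piecewise definition.

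Next I would fix the two ``frozen'' pieces of $\tilde s$. Set $\tilde s:=s$ on $[0,1/4]$; on $[3/4,1]$ let $\tilde s$ be the affine function of slope $b$ with $\tilde s(3/4):=\tfrac12\bigl(s(3/4)+\tfrac54 b\bigr)$. Then $\ell:=\tilde s(1)=\tilde s(3/4)+\tfrac14 b<\tfrac32 b$ because $s(3/4)<b$, the value $\tilde s(3/4)$ is comparable to $b$ and strictly larger than $s(3/4)$, and the two endpoint requirements $\tilde s|_{[0,1/4]}=s$ and $\tilde s'|_{[3/4,1]}=b/a$ hold by construction. It remains to interpolate $\tilde s$ on $[1/4,3/4]$ between the ``shape of $s$'' at the left end and the constant speed $b$ at the right end so that the glued function is an increasing $\tilde\eta$-quasisymmetric homeomorphism onto $[0,\ell]$.

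This interpolation is the heart of the matter and the step I expect to cause the most trouble. The obstruction is that a quasisymmetric $s$ may, near an interior point, behave like $|x-x_0|^{\beta}$ with $\beta$ bounded away from $1$ on both sides — indeed $ds$ can be a purely singular doubling measure, so that $s'=0$ almost everywhere — and consequently one cannot pass from the ``$s$-regime'' to the ``affine regime'' at a single scale: a direct gluing is not quasisymmetric, while a naive smooth convex combination or logarithmic blend destroys either monotonicity or the quasisymmetry bound. The remedy I would use is to carry out the straightening over a geometric sequence of scales decreasing into the interior of $[1/4,3/4]$, adjusting the local behaviour of the parametrisation by only a bounded (depending on $\eta$) multiplicative amount at each of the finitely many scales available, until it has acquired slope $b$ on, say, $[5/8,3/4]$ and matches the frozen affine piece at $3/4$. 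Packaged as a convex combination $\tilde s=(1-w)s+wA$ with an increasing target $A$ (piecewise affine, of slope $b$ near $3/4$, built on that geometric family of scales) arranged so that $A\ge s$ precisely on the set where the cutoff $w$ increases, monotonicity is immediate from the increment identity $\tilde s(x_2)-\tilde s(x_1)=(1-w(x_2))(s(x_2)-s(x_1))+w(x_2)(A(x_2)-A(x_1))+(w(x_2)-w(x_1))(A(x_1)-s(x_1))$, in which all three summands are nonnegative; and $\tilde\eta$-quasisymmetry follows from a case analysis of triples $(x-t,x,x+t)$ according to whether they lie entirely in the pure-$s$ region, entirely in the affine region, or straddle the transition, the straddling case being controlled through the comparabilities of $s$, $A$ and $w$ on $[1/4,3/4]$ from the preliminary step, with the ``weak'' reformulation keeping the bookkeeping manageable. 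Tracking constants throughout yields $\tilde\eta$ depending only on $\eta$, and the bound $\ell<\tfrac32 b$ is already built into the choice of the affine piece, which completes the proof.
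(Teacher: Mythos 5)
Your overall architecture agrees with the paper's: reduce to the quantitative weak $M$-condition on symmetric triples, freeze $\tilde s=s$ on $[0,a/4]$ and an affine piece of slope $b/a$ on $[3a/4,a]$, and interpolate in between; and you have correctly diagnosed the obstruction (a quasisymmetric $s$ need not be absolutely continuous and can behave like $|x-x_0|^{\beta}$ with $\beta$ bounded away from $1$, so neither a hard gluing at one point nor a generic blend works). The gap is that the interpolation itself --- which is the entire content of the lemma --- is never constructed: the piecewise affine target $A$, the cutoff $w$ and the ``geometric sequence of scales'' are not specified, and the $M$-condition is not verified for triples straddling the transition. That verification is exactly where the difficulty sits. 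In your increment identity the term $w(x_2)(A(x_2)-A(x_1))+(w(x_2)-w(x_1))(A(x_1)-s(x_1))$ is of size roughly $w(x_2)\,t\,b$ near the left edge of the transition, whereas the competing pure-$s$ increment $s(x)-s(x-t)$ is only bounded below by $c\,t^{\kappa_1}$ with $\kappa_1=\kappa_1(\eta)\ge 1$ (bi-H\"older continuity of quasisymmetric maps); boundedness of the ratio therefore forces $w$ to vanish at a rate of order $t^{\kappa_1-1}$ tied to $\eta$, so an affine or generic smooth cutoff fails as soon as $\kappa_1>2$, and a symmetric issue arises where $w$ reaches $1$. Asserting that ``a bounded multiplicative adjustment at each of finitely many scales'' closes this is a plan, not a proof; it is precisely the estimate that has to be carried out.

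For comparison, the paper sidesteps the function-level blend entirely. It first reduces to \emph{smooth} $s$ by approximating uniformly with smooth $\eta$-quasisymmetric maps (Kelingos) and using normality of normalized quasisymmetric families to pass to the limit, and then blends at the level of derivatives, $\tilde s(t):=\int_0^t\bigl(\psi_0(x)s'(x)+\psi_1(x)\bigr)\,\mathrm{d}x$, where $\psi_0,\psi_1$ form a partition of unity whose members vanish to \emph{infinite} order at $1/4$ and $3/4$. Monotonicity and $\ell<\tfrac32 b$ are then immediate, and the $M$-condition is checked by a direct case analysis in which the infinite-order vanishing of the cutoffs absorbs the powers $h^{1-\kappa_1}$ and $h^{\kappa_2-1}$ coming from the bi-H\"older bounds. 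Note in particular that your objection to a ``naive smooth convex combination'' applies to a combination of the functions, not to a combination of the derivatives after smoothing, which is the route the paper takes; to salvage your version you would need to make $A$, $w$ and the straddling-case estimate explicit with the same degree of care.
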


We postpone the proof of this lemma to the end of the section, since it is quite technical and we rather directly state a refinement, which gives the desired passage between parametrizations:

\begin{proposition}\label{prop-reparam}
Let $r,s:[0,a]\mapsto [0,b]$ be {increasing}, $\eta$-quasisymmetric homeomorphisms. Then there exists a number $c \in (0,a/4)$ which is only dependent on $\eta$ and a homeomorphism $\tilde{s}:[0,a] \mapsto [0,b]$, such that $\tilde{s}|_{[0,c]} = r|_{[0,c]}$, $\tilde{s}|_{[a-c,a]} = s|_{[a-c,a]}$ and $\tilde{s}$ is $\tilde{\eta}$-quasisymmetric, where $\tilde{\eta}$ is only dependent on $\eta$.
\end{proposition}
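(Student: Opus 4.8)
The plan is to reduce the statement to a purely one-dimensional reparametrisation of the identity, and then to build that reparametrisation by a \emph{single} application of Lemma~\ref{lemma-1d} glued to an explicit piecewise-affine interpolation.

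\emph{Reduction.} Set $h:=s^{-1}\circ r:[0,a]\to[0,a]$. Since inverses and compositions of quasisymmetric maps are quasisymmetric with modulus controlled by the original ones, $h$ is an increasing $\eta_h$-quasisymmetric homeomorphism with $h(0)=0$, $h(a)=a$ and $\eta_h=\eta_h(\eta)$. It then suffices to produce an increasing $\eta_H$-quasisymmetric homeomorphism $H:[0,a]\to[0,a]$, with $\eta_H$ depending only on $\eta$, such that $H=h$ on $[0,c]$ and $H=\mathrm{id}$ on $[a-c,a]$ for some $c\in(0,a/4)$ depending only on $\eta$; indeed $\tilde s:=s\circ H$ is then quasisymmetric (as a composition), maps $[0,a]$ onto $[0,b]$, equals $s\circ h=r$ on $[0,c]$, and equals $s$ on $[a-c,a]$.

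\emph{Construction of $H$.} Choose a small fraction $\theta=\theta(\eta)\in(0,\tfrac1{16})$ (to be fixed below) and put $c:=\theta a$. Apply Lemma~\ref{lemma-1d} to $h|_{[0,4c]}:[0,4c]\to[0,h(4c)]$ to obtain a $\tilde\eta$-quasisymmetric homeomorphism $\tilde h:[0,4c]\to[0,\ell]$ with $\ell<\tfrac32 h(4c)$, with $\tilde h=h$ on $[0,c]$, and with $\tilde h$ affine of slope $\sigma:=h(4c)/(4c)$ on $[3c,4c]$. Because $h$ is $\eta_h$-quasisymmetric on all of $[0,a]$ with $h(0)=0$, $h(a)=a$, the ratio $h(4c)/a$ is pinched between two positive constants depending only on $\eta$ (and $\theta$); hence, if $\theta$ is chosen small enough, $\ell<a-c$, while $\sigma$ lies in a fixed interval $[C_1(\eta),C_2(\eta)]$. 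Now set $H:=\tilde h$ on $[0,4c]$ and $H:=\mathrm{id}$ on $[a-c,a]$, and on the middle interval $[4c,a-c]$ define $H$ to be the increasing piecewise-affine homeomorphism onto $[\ell,a-c]$ that has a short initial segment of slope $\sigma$, a long central segment of slope $\mu$, and a short final segment of slope $1$, where the two short segments have a fixed ($\eta$-dependent) length and $\mu$ is then determined by the requirement that $H$ be a homeomorphism. A short computation (using $\ell<\tfrac32 h(4c)$ and the bound on $\sigma$, and shrinking $\theta$ and the short lengths if necessary) gives $\mu\in[\tfrac7{10},\tfrac{10}3]$, so that all three slopes of the central part lie in an interval whose ratio is bounded in terms of $\eta$ alone; consequently $H|_{[4c,a-c]}$ is bi-Lipschitz with an $\eta$-dependent constant, in particular quasisymmetric with modulus depending only on $\eta$.

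\emph{Quasisymmetry of $H$, and conclusion.} By construction $H$ is affine of slope $\sigma$ on a full neighbourhood of $4c$ (the affine end of $\tilde h$ merges with the affine initial segment of the central part) and affine of slope $1$ on a full neighbourhood of $a-c$ (the affine final segment merges with $\mathrm{id}$), so there is no slope mismatch at either junction. Since $H$ is quasisymmetric on each of the three subintervals with modulus depending only on $\eta$, and since the junctions are affine with matching slope while the overlapping affine regions are definite fractions of the adjacent pieces, a standard chaining argument yields that $H$ is $\eta_H$-quasisymmetric on $[0,a]$ with $\eta_H=\eta_H(\eta)$. As $H=h$ on $[0,c]$ and $H=\mathrm{id}$ on $[a-c,a]$, the map $\tilde s:=s\circ H$ has all the required properties, with $c=\theta(\eta)a<a/4$. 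The main obstacle throughout is that concatenations of quasisymmetric maps fail to be quasisymmetric when there is a slope mismatch at a junction; the entire construction is arranged to avoid this, which is why Lemma~\ref{lemma-1d} is invoked on the short segment $[0,4c]$ (so that both the value $\ell$ and the terminal slope $\sigma$ can be controlled by the global quasisymmetry of $h$, and so that $c$ depends only on $\eta$) and why the piecewise-affine middle carries a transitional slope $\mu$ that matches $\sigma$ on the left, matches $1$ on the right, and stays comparable to both.
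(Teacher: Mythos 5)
Your proof is correct, but it takes a genuinely different route from the paper's. The paper normalizes to $a=b=1$, picks $d$ with $\eta(d)\le 1/4$ so that $r([0,d])\subset[0,1/4]$ and $s([1-d,1])\subset[3/4,1]$, applies Lemma \ref{lemma-1d} \emph{twice} (once to $r$ near $0$ and once, after reflection, to $s$ near $1$), and joins the two resulting pieces by a single affine segment; the quasisymmetry across the junctions at $d$ and $1-d$ is then controlled by bounding the slope jumps via \eqref{boundR} and \eqref{boundS}. You instead reduce to the composite $h=s^{-1}\circ r$ (using stability of quasisymmetry under inversion and composition, a fact the paper itself invokes in the proof of Lemma \ref{lemma-extension}), so that the problem becomes interpolating between $h$ and the identity; this needs only \emph{one} application of Lemma \ref{lemma-1d}, and your three-slope affine middle is arranged so that the slopes \emph{match} at both junctions, removing the slope-jump issue the paper has to estimate. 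The price is that you must verify the pinching of $h(4c)/a$ and the bounds on $\sigma$ and $\mu$, which you do correctly. Two remarks: (i) your choice of $\theta$ with $\tfrac32\eta_h(4\theta)<1-\theta$ implicitly assumes $\eta_h$ takes values below a threshold for small arguments; this is the same implicit assumption the paper makes when it "finds the largest $d$ with $\eta(d)\le 1/4$", so it is not an additional gap, but it is worth flagging. (ii) Your final "standard chaining argument" for gluing quasisymmetric pieces on overlapping intervals of definite relative size is exactly the principle the paper also relies on ("since the three intervals overlap"); both arguments leave it at the same level of detail, and in your construction the matched affine slopes on the overlaps actually make this step cleaner than in the paper.
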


\begin{proof}
  Without loss of generality, we may consider only the case when $a = b = 1$ as the general case can be handled by following in verbatim the beginning of the proof of Lemma \ref{lemma-1d}. 
  
  First of all, we find the largest number $d \in (0,1/4]$ such that $\eta(d) \leq 1/4$. Then $r$ maps the interval $[0,d]$ into $[0,1/4]$ and $s$ maps $[1-d,1]$ into $[3/4,1]$ since
  \begin{align}\label{boundR}0 < {\eta(d^{-1})^{-1}} \leq \frac{r(d)-r(0)}{r(1)-r(0)} = r(d) \leq \eta(d) \leq 1/4\end{align}
  and in the same way 
  \begin{align}\label{boundS}{\eta(d^{-1})^{-1}} \leq 1-s({1-d}) \leq \eta(d) \leq 1/4. \end{align}

 Now, we use Lemma \ref{lemma-1d} twice to construct the pieces $\tilde{s}|_{[0,d]}$ and $\tilde{s}|_{[1-d,1]}$ in such a way that $\tilde{s}|_{[0,d/4]} = r|_{[0,d/4]}$ and $\tilde{s}|_{[1-d/4,1]} = s|_{[1-d/4,1]}$, as well as $\tilde{s}'|_{[3d/4,d]} = \frac{r(d)}{d}$ and $\tilde{s}'|_{[1-d,1-3d/4]} = \frac{1-s(1-d)}{d}$. Then we know from Lemma \ref{lemma-1d} that $\tilde{s}(d) <  3/8$ and $\tilde{s}(1-d) >  5/8$, so we can simply connect both parts of the function affinely without losing injectivity. 
  
  The resulting function $\tilde{s}$ is quasisymmetric on $[0,d]$ and $[1-d,1]$ owing to Lemma \ref{lemma-1d} and moreover, on the interval $[3d/4,1-3d/4]$ it consists of 3 affine segments. Therefore, on the interval $[3d/4,1-3d/4]$ the quasisymmetry modulus of $\tilde{s}$ is determined by the changes of the slope at the points $d$ and $1-d$, but this can easily be seen as bounded by $\eta$ using (\ref{boundR}) and (\ref{boundS}). Therefore, since the three intervals overlap, $\tilde{s}$ is $\tilde{\eta}$-quasisymmetric with $\tilde{\eta}$ only depending on $\eta$.
\end{proof}

We are now in the position to present our final construction of $\omega$:

\begin{construction}
\label{const-final}
  Take some edge $[\alpha, \alpha+\varepsilon\unit_1]$ in $G$ and the corresponding $\omega_\alpha$ from Construction \ref{const-inner}. 
  
  Using Lemma \ref{lemma-quasicircle} find a re-parametrisation $s:[0,\varepsilon]\rightarrow[0,\varepsilon]$ of $\omega_\alpha$ such that $\omega_\alpha \circ s$ is quasisymmetric. Further find the number $\mathfrak{m} \in (0, \varepsilon)$ such that $s(\mathfrak{m}) = \varepsilon/2$. 
  
 Realize that  $s = \omega_\alpha^{-1} \circ (\omega_\alpha \circ s)$ is quasisymmetric on $[0,\mathfrak{m}]$ and $[\mathfrak{m},\varepsilon]$ because $\omega_\alpha$ was constructed in such a way that it is quasisymmetric on $[0,\varepsilon/2]$ as well as $[\varepsilon/2,\varepsilon]$ (cf.\@ Lemma \ref{lemma-constructionProperties}). Hence, apply Proposition \ref{prop-reparam} to each of those intervals\ to construct a function $\tilde{s}:[0,\varepsilon] \to [0,\varepsilon]$ {that is quasisymmetric when restricted to each of them and satisfies}\footnote{For the first interval, we obtain this by applying Proposition \ref{prop-reparam} to the piecewise affine function $r$ satisfying $r(0)=0,r(\mathfrak{m}/2)=\mathfrak{m}/2,r(\mathfrak{m})=\varepsilon/2$ as well as $s$ constructed above. On the second interval we connect $s$ to the piecewise affine $r$ that fulfills $r(\mathfrak{m})=\varepsilon/2, r((\varepsilon+\mathfrak{m})/2)=(\varepsilon+\mathfrak{m})/2, r(\varepsilon) = r(\varepsilon)$.}
  \[\tilde{s}(t) = \begin{cases} t &\mbox{ for } 0 \leq t \leq \lambda \varepsilon \\s(t) &\mbox{ for } \mathfrak{m}-\lambda \varepsilon \leq t \leq \mathfrak{m}+\lambda \varepsilon \\ t &\mbox{ for } \varepsilon-\lambda \varepsilon \leq t \leq \varepsilon \end{cases}\]
  for a suitable $\lambda > 0$ determined by the number $c$ in Proposition \ref{prop-reparam} and depending only on the quasisymmetry modulus $\eta$.
  
  Now define 
  \begin{align*}
  \tilde{\omega}_\alpha(t) &= \omega_\alpha(\tilde{s}(t)) \qquad \text{ and } \\   \omega(\alpha + \unit_1 s) &= \tilde{\omega}_\alpha(s) \qquad \text{on $[\alpha,\alpha+\unit_1 \varepsilon]$}.
  \end{align*}
\end{construction}

We immediately have the following property of $\tilde{\omega}_\alpha(s)$:
\begin{lemma}\label{lemma-gQuasisym}
  Let $\delta$ satisfy \eqref{delta_choice} as well as $\delta < \frac{\varepsilon}{5}$. Let $y, y_k$ be as in Proposition \ref{prop-cutOff} and fulfill \eqref{needs-CutOff}. Then $\tilde{\omega}_\alpha$ found in Construction \ref{const-final} is well defined and $\bar{\eta}$-quasisymmetric, where $\bar{\eta}$ depends only on $K$.
\end{lemma}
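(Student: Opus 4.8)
The plan is to write $\tilde\omega_\alpha=\omega_\alpha\circ\tilde s$ and establish that this map is quasisymmetric on three \emph{overlapping} subintervals of $[0,\varepsilon]$, then patch them together; the only constants entering will be $\eta$ and the quasicircle constant of Lemma~\ref{lemma-quasicircle}, both of which depend on $K$ alone (by Lemma~\ref{lemma-locQuasi} and Lemma~\ref{lemma-quasicircle}). First I would settle well-definedness. By Lemma~\ref{lemma-constructionProperties}, item~4, $\omega_\alpha$ is a well-defined injective map on $(0,\varepsilon)$, and $\mathfrak{m}:=s^{-1}(\varepsilon/2)$ lies in $(0,\varepsilon)$ since $s$ is an increasing homeomorphism of $[0,\varepsilon]$. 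Applying Proposition~\ref{prop-reparam} separately on $[0,\mathfrak{m}]$ and on $[\mathfrak{m},\varepsilon]$ produces increasing homeomorphisms that agree at $\mathfrak{m}$ (near $\mathfrak{m}$ each coincides with $s$, and $s(\mathfrak{m})=\varepsilon/2$), so $\tilde s$ is a well-defined increasing homeomorphism $[0,\varepsilon]\to[0,\varepsilon]$ with $\tilde s([0,\mathfrak{m}])=[0,\varepsilon/2]$ and $\tilde s([\mathfrak{m},\varepsilon])=[\varepsilon/2,\varepsilon]$; hence $\tilde\omega_\alpha=\omega_\alpha\circ\tilde s$ is well defined and injective on $[0,\varepsilon]$.

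Next I would collect the three quasisymmetry estimates. On $[0,\mathfrak{m}]$, Proposition~\ref{prop-reparam} gives that $\tilde s|_{[0,\mathfrak{m}]}:[0,\mathfrak{m}]\to[0,\varepsilon/2]$ is quasisymmetric with modulus depending only on $\eta$, while $\omega_\alpha|_{[0,\varepsilon/2]}$ is $\tilde\eta$-quasisymmetric by Lemma~\ref{lemma-constructionProperties}, item~5; since the composition of an $\eta_1$- and an $\eta_2$-quasisymmetric map is $(\eta_2\circ\eta_1)$-quasisymmetric, $\tilde\omega_\alpha|_{[0,\mathfrak{m}]}$ is quasisymmetric with modulus depending only on $K$. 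The same argument on $[\mathfrak{m},\varepsilon]$ (using $\omega_\alpha|_{[\varepsilon/2,\varepsilon]}$) handles that interval. Finally, on $[\mathfrak{m}-\lambda\varepsilon,\mathfrak{m}+\lambda\varepsilon]$ we have $\tilde s=s$ by the very definition of $\tilde s$ in Construction~\ref{const-final}, so there $\tilde\omega_\alpha=\omega_\alpha\circ s$, which is quasisymmetric on all of $[0,\varepsilon]$ with modulus depending only on $K$ — this is precisely what the parametrization $s$ was chosen for in Construction~\ref{const-final}, via Lemma~\ref{lemma-quasicircle} (the image of $\omega_\alpha$ being a segment of a quasicircle whose constant depends only on $K$).

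It then remains to glue. Since $\lambda$ is chosen small (in terms of $\eta$ only) one has $0<\mathfrak{m}-\lambda\varepsilon<\mathfrak{m}<\mathfrak{m}+\lambda\varepsilon<\varepsilon$, so $[\mathfrak{m}-\lambda\varepsilon,\mathfrak{m}+\lambda\varepsilon]$ overlaps each of $[0,\mathfrak{m}]$ and $[\mathfrak{m},\varepsilon]$ in a subinterval of positive length. A homeomorphism of an interval that is quasisymmetric on two subintervals overlapping in a nondegenerate interval is quasisymmetric on their union, with modulus controlled by the two moduli — this is the same gluing principle already used in the proof of Proposition~\ref{prop-reparam} (alternatively, one passes to the weak/doubling form of quasisymmetry, which glues trivially across a nondegenerate overlap, and uses that on a connected interval weak quasisymmetry upgrades back to quasisymmetry). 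Applying it first to $[0,\mathfrak{m}]\cup[\mathfrak{m}-\lambda\varepsilon,\mathfrak{m}+\lambda\varepsilon]$ and then to the result together with $[\mathfrak{m},\varepsilon]$ yields that $\tilde\omega_\alpha$ is $\bar\eta$-quasisymmetric on $[0,\varepsilon]$ with $\bar\eta$ depending only on $K$. I expect this last step to be the main obstacle: a naively chosen triple of points can straddle the overlap, so one genuinely needs the positive-length overlaps engineered by the identity-matching built into Proposition~\ref{prop-reparam} and Construction~\ref{const-final}; once these are in place, the rest is bookkeeping of constants, all of which trace back to $K$.
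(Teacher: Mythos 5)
Your proposal is correct and follows essentially the same route as the paper's proof: quasisymmetry of $\tilde{\omega}_\alpha$ on $[0,\mathfrak{m}]$ and $[\mathfrak{m},\varepsilon]$ as a composition of the quasisymmetric pieces from Lemma \ref{lemma-constructionProperties} and Proposition \ref{prop-reparam}, quasisymmetry on $[\mathfrak{m}-\lambda\varepsilon,\mathfrak{m}+\lambda\varepsilon]$ because $\tilde{s}=s$ there and $\omega_\alpha\circ s$ is the quasisymmetric parametrization from Lemma \ref{lemma-quasicircle}, and then gluing across the three overlapping intervals. You are in fact somewhat more explicit than the paper on well-definedness and on why the overlap gluing is legitimate, but the argument and the tracking of constants back to $K$ are the same.
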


\begin{proof}
We know from Lemma \ref{lemma-constructionProperties} that $\omega_\alpha$ is quasisymmetric on $[0,\varepsilon/2]$ as well as $[\varepsilon/2,\varepsilon]$ with a quasisymmetry modulus depending only on $K$. Therefore, also $s$ and hence $\tilde{s}$ are quasisymmetric on $[0,\mathfrak{m}]$ and $[\mathfrak{m}, \varepsilon]$ with a quasisymmetry modulus depending only on $K$ and so is $\tilde{\omega}_\alpha$ as a composition. 

Furthermore, we know that $\tilde{\omega}_\alpha = \omega_\alpha \circ s$ is quasisymmetric on $[\mathfrak{m}-\lambda\varepsilon,\mathfrak{m}+\lambda\varepsilon]$ per construction. But since those three intervals overlap, $\tilde{\omega}_\alpha$ is quasisymmetric on all of $[0,\varepsilon]$ and the modulus depends only on the moduli of the functions involved, which all derive from $K$.
\end{proof}

With all the ingredients at hand, we summarize the proof of Proposition \ref{prop-cutOff}:

\begin{proof}[Proof of Proposition \ref{prop-cutOff}]
We pick some $\mathrm{diam}(\Omega) >> \varepsilon > 0$ and find an appropriate $\delta$ satisfying simultaneously \eqref{delta_choice} as well as $\delta<\varepsilon/5$. We perform the partition from Construction \ref{const-outer} and define $\omega$ on $\Omega_{outer} \cup \Omega_{inner}$ by \eqref{Omega-def1} while on the grid $G$, we proceed according to Construction \ref{const-final}.

We know, due to Remark \ref{remark-quasisym}, that both $y$ and $y_k$ are $\eta$-quasisymmetric on a neighborhood of each of the squares in $\Omega_{mid}$ with $\eta$ depending only on $K$ due to Lemma \ref{lemma-locQuasi}. Therefore, by employing also Lemma \ref{lemma-gQuasisym}, $\omega$ is quasisymmetric on the boundary of every square $S_{ij} \subset \Omega_{mid}$. So, we may use the Beurling-Ahlfors extension from Lemma \ref{lemma-extension} to extend $\omega$ to a quasiconformal homeomorphism on each square of $\Omega_{mid}$ which makes $\omega$ a homeomorphism defined on all of $\Omega$ satisfying $\abs{\nabla \omega}^2 \leq \kappa(K) \det(\nabla \omega)$ for some $\kappa(K)$ depending only on $K$. Moreover, since $\omega$ coincides with $y$ in a neighborhood of $\partial \Omega$, it fulfills \eqref{Ciarlet-Necas} which makes it globally injective. In other words, $\omega$ is $\kappa(K)$-quasiconformal with $\kappa(K)$ depending only on $K$.

Finally, since  the image of every square $S \subset\Omega_{mid}$ under $\omega$ is contained in the union of the image of the given square and its neighbors under $y$ and $y_k$\footnote{To see this, consider the image of $\partial S$ under $\omega$. For each $x\in\partial S$, $\omega(x)$ is given by $y(\tilde{x})$ or $y_k(\tilde{x})$ with $\tilde{x}$ being either $x$ on $\partial \Omega_{mid}$ or given by $\tilde{x} = \phi_i(x)$, so that $\abs{\tilde{x}-x}< \varepsilon$ by Lemma \ref{lemma-constructionProperties} (we can safely ignore the reparametrisation in construction \ref{const-final} since it does not change the image). Then the image of $\partial S$ under $\omega$ is contained in the union of the images of $S$ and its neighbours under $y$ and $y_k$ and thus the same holds for $S$ since $\partial\omega(S) = \omega(\partial S)$.}, we get that (denote by $x_i$ the midpoint of such square)
\[\omega(S) \subset y(B_{3\varepsilon}(x_i)) \cup y_k(B_{3\varepsilon}(x_i)) \]
so that
\[\norm{\omega-y}{L^\infty(S_i; \mathbb{R}^2)} < 3\varepsilon + \delta.\]
Similarly, we get that
\[\norm{\omega^{-1}-y^{-1}}{L^\infty(\omega(S_i); \R^2)} < 3\varepsilon + \delta,\]
which, together with $\omega = y$ on $\Omega_{outer}$ and the fact that \ref{needs-CutOff} guarantees the corresponding $L^\infty$ bounds on $\Omega_{inner}$, yields item 1.
\end{proof}

% 
% \begin{remark}
%   If one tries to extend this construction to higher dimensions, there are two main problems that will occur. 
%   
%   The first problem that occurs is that while the Construction \ref{const-inner} can actually be carried out in the same way as before, it can only be used to connect $\Omega_{inner}$ and $\Omega_{outer}$ on one-dimensional lines. However in order to separate $\Omega_{mid}$ into smaller cells, we need to connect the two sets on $(n-1)$-dimensional surfaces. One could try to iterate the construction, that is first connect the two sets along the lines, then the two dimensional surfaces between the lines by some extension theorem and so on, until dimension $(n-1)$ is reached, however, this is not trivial.
%   
%   The second problem is of a more topological nature, namely that there is no higher dimensional equivalent of [TODO cube gluing thm]. This is due to the fact that [TODO...]
% \end{remark}

To end this section, we present the proof of Lemma \ref{lemma-1d}:

\begin{proof}(of Lemma \ref{lemma-1d}--1-dimensional fitting)
First, we realize that it suffices to consider quasisymmetric homeomorphisms $s:[0,1] \to [0,1]$ and to then construct $\tilde{s}$ with $\tilde{s}_{\mid_{[0,1/4]}} = s_{\mid_{[0,1/4]}}$ and $\tilde{s}'_{\mid_{[3/4,1]}} = 1$. Indeed, in the general case compose with similarities without changing $\eta$ by defining $\hat{s}: t \mapsto b^{-1} s(a t): [0,1] \to [0,1]$. Then, if we can construct $\tilde{s}$ as specified above the function $b \tilde{s}\left(\frac{t}{a}\right)$ will have all the desired properties. Notice also that such a rescaling is equivalent to composing with similarities in the domain and in the image and thus does not change the quasi-symmetry modulus.

Moreover, we may restrict our attention to $s:[0,1] \to [0,1]$ that are additionally smooth. While in fact $s$ may not even be absolutely continuous (cf.\@ e.g.\@ \cite[Thm.\@ 3]{BeurlingAhlfors} for the construction of a counterexample), in one dimension it can be uniformly approximated by a sequence of smooth $\eta$-quasisymmetric homeomorphisms (cf.\@ \cite[Thm.\@ 7]{kelingos1966}\footnote{The smoothing used in \cite{kelingos1966} may perturb the end points of the approximation slightly, so we only have $s_k(0) \rightarrow 0$ and $s_k(1)\rightarrow 1$. Note however that affinely rescaling the image back to $[0,1]$ has no impact on the convergence, so we can assume $s_k(0)=0$ and $s_k(1)=1$.}). So if $s$ is not smooth, we can approximate it by a sequence of smooth $s_k$ and construct the corresponding functions $\tilde{s}_k$ with $\tilde{s}_k\mid_{[0,1/4]} = s_k\mid_{[0,1/4]}$ and $\tilde{s}_k'\mid_{[3/4,1]} = 1$. Furthermore we know that normalized families of quasisymmetric functions are normal and therefore $\tilde{s}_k \rightarrow \tilde{s}$ for a subsequence (cf.\@ \cite[Thm.\@ 8]{kelingos1966} or \cite[Cor.\@ 3.9.3]{AstalaIwaniec}). But then we trivially have $\tilde{s}\mid_{[0,1/4]} = s\mid_{[0,1/4]}$ and $\tilde{s}'\mid_{[3/4,1]} = 1$.

Now to start with the actual proof, let us consider the following partition of unity
  \[\psi_0(t) = \begin{cases} 1& \mbox{ for }0 \leq t \leq 1/4 \\ \frac{e^{-\frac{1}{3/4-t}}}{e^{-\frac{1}{3/4-t}}+e^{-\frac{1}{t-1/4}} } &\mbox{ for } 1/4< t < 3/4 \\ 0& \mbox{ for }3/4 \leq t \leq 1  \end{cases} \]
  and
  \[\psi_1(t) = \begin{cases} 0& \mbox{ for }0 \leq t \leq 1/4 \\ \frac{e^{-\frac{1}{t-1/4}}}{e^{-\frac{1}{3/4-t}}+e^{-\frac{1}{t-1/4}} } &\mbox{ for } 1/4< t < 3/4 \\ 1& \mbox{ for }3/4 \leq t \leq 1  \end{cases} \]
  and define $\tilde{s}$ via the integral of a convex combination of the derivatives of $s$ (recall that we assume that $s$ is smooth) and the identity; i.e.  
  \[\tilde{s}(t) := \int_0^t \psi_0(x)s'(x)+\psi_1(x) \dd x.\]
Then $\tilde{s}$ is clearly an absolutely continuous, strictly monotone homeomorphism with $\tilde{s}(0) = 0$ and 
  \[\tilde{s}(1) < \int_0^1 s'(x)+\psi_1(x) \dd x = 1 + 1/2.\]
  
  We now need to show the quasisymmetry of $\tilde{s}$. For this we verify the well-known $M$-condition \cite{Ahlfors}, i.e., for all $t\in [0,1]$ and $h > 0$ we have that 
$$
  \frac{1}{M} \leq \frac{\tilde{s}(t+h)-\tilde{s}(t)}{\tilde{s}(t)-\tilde{s}(t-h)} \leq M,
$$
which reduces to showing that for all $t\in [0,1]$ and $h \neq 0$ there exists a constant $M$ that is dependent only on $\eta$ such that 
\begin{equation}
\frac{\abs{\tilde{s}(t+h)-\tilde{s}(t)}}{\abs{\tilde{s}(t)-\tilde{s}(t-h)}} = \frac{\abs{\int_t^{t+h} \psi_0(x)s'(x)+\psi_1(x) \dd x}}{\abs{\int_{t-h}^t \psi_0(x)s'(x)+\psi_1(x) \dd x}} \leq M.
\label{Mcondition}
\end{equation}
In fact, since $\tilde{s}: [0,1] \to \R$, it suffices to verify \eqref{Mcondition} for $\abs{h} < 1/8$ because for larger $h$ we may proceed by iteration. 

Let us first verify \eqref{Mcondition} for $h > 0$ and $t > 1/2$. In this case, we know that $\psi_1(t-h) > \psi_1(3/8) > 0$ and so
\begin{align*}
\frac{\abs{\int_t^{t+h} \psi_0(x)s'(x)+\psi_1(x) \dd x}}{\abs{\int_{t-h}^t \psi_0(x)s'(x)+\psi_1(x) \dd x}} &\leq \frac{\abs{\int_t^{t+h} \psi_0(t)s'(x) \dd x}}{\abs{\int_{t-h}^t \psi_0(t)s'(x)+\psi_1(t-h) \dd x}} +\frac{\abs{\int_t^{t+h}\psi_1(t+h) \dd x}}{\abs{\int_{t-h}^t \psi_0(t)s'(x)+\psi_1(t-h) \dd x}} \\ & \leq 
\frac{\abs{\psi_0(t) \int_t^{t+h} s'(x) \dd x}}{\abs{\psi_0(t) \int_{t-h}^t s'(x) \dd x}} +\frac{\abs{\int_t^{t+h}\psi_1(t+h) \dd x}}{\abs{\int_{t-h}^t\psi_1(t-h) \dd x}} 
    \\&= \frac{\abs{ s(t+h)-s(t)}}{\abs{ s(t)-s(t-h)}} +\frac{\abs{h\psi_1(t+h)}}{\abs{h\psi_1(t-h)}} < \eta(1) + \frac{\psi_1(1)}{\psi_1(3/8)}.
\end{align*}
{Note that, strictly, the above calculation holds only for $t < 3/4$ for which we have $\phi_0(t) \neq 0$. In the other case, i.e. $t \geq 3/4$, it holds that $\phi_0(x) = 0$ for all $x\geq t$ and so the term $\frac{\abs{\int_t^{t+h} \psi_0(t)s'(x) \dd x}}{\abs{\int_{t-h}^t \psi_0(t)s'(x)+\psi_1(t-h) \dd x}}$ becomes zero and we may repeat the calculation with this term dropped.}

Similar arguments apply in the case when $t \leq 1/2$ and $h<0$ since then $\psi_0(t-h) \geq \psi_0(5/8) > 0$ and we estimate
\begin{align*}
\frac{\abs{\int_t^{t+h} \psi_0(x)s'(x)+\psi_1(x) \dd x}}{\abs{\int_{t-h}^t \psi_0(x)s'(x)+\psi_1(x) \dd x}} & \leq 
\frac{\abs{\psi_0(t+h) \int_t^{t+h} s'(x) \dd x}}{\abs{\psi_0(t-h) \int_{t-h}^t s'(x) \dd x}} +\frac{\abs{\int_t^{{t+h}}\psi_1({t}) \dd x}}{\abs{\int_{t-h}^t\psi_1(t) \dd x}} 
     < \frac{\psi_0(1)}{\psi_0(5/8)} \eta(1) + 1
\end{align*}
{ where similarly as above the term $\frac{\abs{\int_t^{{t+h}}\psi_1({t}) \dd x}}{\abs{\int_{t-h}^t\psi_1(t) \dd x}} $ does not occur for $t<1/4$.}

Now, we handle the complementary cases beginning with $t \leq 1/2, h > 0$ which are slightly more elaborate. To simplify the notation, we will use $C$ for a generic constant that is independent of the problem parameters and may change from expression to expression. 

Relying on monotonicity of $\psi_0$ and $\psi_1$ we obtain similarly as above
\begin{align*}
\frac{\abs{\int_t^{t+h} \psi_0(x)s'(x)+\psi_1(x) \dd x}}{\abs{\int_{t-h}^t \psi_0(x)s'(x)+\psi_1(x) \dd x}} &\leq \frac{\abs{\int_t^{t+h} \psi_0(t)s'(x) \dd x}}{\abs{\int_{t-h}^t \psi_0(t)s'(x)+\psi_1(t-h) \dd x}} +\frac{\abs{\int_t^{t+h}\psi_1(t+h) \dd x}}{\abs{\int_{t-h}^t \psi_0(t)s'(x)+\psi_1(t-h) \dd x}} \\ & \leq 
\frac{\abs{\psi_0(t) \int_t^{t+h} s'(x) \dd x}}{\abs{\psi_0(t) \int_{t-h}^t s'(x) \dd x}} +  \frac{\abs{\int_t^{t+h}\psi_1(t+h) \dd x}}{\abs{\int_{t-h}^t \psi_0(t)s'(x)+\psi_1(t-h) \dd x}}
    \\&\leq\eta(1) + \frac{h\psi_1(t+h)}{\psi_0(t) \abs{s(t)-s(t-h)} + h\psi_1(t-h)},
\end{align*}
where, even though the denominator of the second term may not vanish, $\psi_1(t-h)$ can; thus we cannot proceed as in the previous cases. To estimate the second term, we limit ourselves to the situation  $t+h > 1/4$ (i.e.  $t>1/8> h$) since the term vanishes otherwise and still distinguish two cases: $t-1/4 < \sqrt{h}$ and $t-1/4 \geq \sqrt{h}$. In the latter case $\psi_1(t-h)$ is strictly positive and so
\begin{align*}
 \frac{h\psi_1(t+h)}{\psi_0(t) \abs{s(t)-s(t-h)} + h\psi_1(t-h)} &\leq \frac{\psi_1(t+h)}{ \psi_1(t-h)} \\& \leq C e^{-\frac{1}{t+h-1/4}+\frac{1}{t-h-1/4}} = C e^{\frac{-t+h+1/4+t+h-1/4 }{(t+h-1/4)(t-h-1/4)}} =C e^{\frac{2h}{(t-1/4)^2-h^2}} \leq C e^{\frac{2h}{h-h^2}},
\end{align*}
which is bounded for $h\in(0,1/8)$. When  $t-1/4 < \sqrt{h}$, we write
\begin{align*}
 \frac{h\psi_1(t+h)}{\psi_0(t) \abs{s(t)-s(t-h)} + h\psi_1(t-h)} & \leq \frac{h\psi_1(t+h)}{\psi_0(t) \abs{s(t)-s(t-h)}} \leq C \frac{he^{-\frac{1}{t+h-1/4}}}{\psi_0(1/2) \abs{s(t)-s(t-h)}} \leq C h^{1-\kappa_1} e^{-\frac{1}{\sqrt{h}+h}}
 \end{align*}
 where in the last estimate we used that (cf. \cite[Thm.\@ 5 and Thm.\@ 10]{kelingos1966}) quasisymmetric maps in one dimension are bi-Hölder continuous; i.e. for all $t_1, t_2 \in (0,1)$
 $$
8^{\kappa_1} |t_1-t_2|^{\kappa_2}\geq |s(t_1) - s(t_2)| \geq 8^{-\kappa_1} |t_1-t_2|^{\kappa_1},
 $$
 where $\kappa_1$ and $\kappa_2$ are solely dependent on $\eta(1)$. However, $\lim_{h\to0} C h^{1-\kappa_1} e^{-\frac{1}{\sqrt{h}+h}} = 0$ which shows that $C h^{1-\kappa_1} e^{-\frac{1}{\sqrt{h}+h}}$ is uniformly bounded for $h \in 
(0,1/8)$.

In the remaining case when $t > 1/2, h < 0 $ we argue similarly as above:
\begin{align*}
\frac{\abs{\int_t^{t+h} \psi_0(x)s'(x)+\psi_1(x) \dd x}}{\abs{\int_{t-h}^t \psi_0(x)s'(x)+\psi_1(x) \dd x}} &\leq \frac{\psi_0(t+h)\abs{s(t)-s(t+h)}}{\abs{\psi_0(t-h)(s(t-h)-s(t))+h\psi_1(t)}} + 1 
\end{align*}
    Now to bound the first term we can assume $t+h < 3/4$ and again distinguish two cases. Either we have $3/4-t < \sqrt{-h}$ and therefore get that
    \[\frac{\psi_0(t+h)\abs{s(t)-s(t+h)}}{\abs{\psi_0(t-h)(s(t-h)-s(t))+h\psi_1(t)}} \leq \frac{\psi_0(t+h)\abs{s(t)-s(t+h)}}{h\psi_1(t)} \leq C e^{-\frac{1}{\sqrt{-h}-h}} h^{\kappa_2-1}\]
    which is bounded, or we have $3/4-t \geq \sqrt{-h}$ and hence
    \[\frac{\psi_0(t+h)\abs{s(t)-s(t+h)}}{\abs{\psi_0(t{-}h)(s(t{-}h){-}s(t))+h\psi_1(t)}} \leq \frac{\psi_0(t+h)\abs{s(t){-}s(t+h)}}{\psi_0(t{-}h)\abs{s(t{-}h){-}s(t))}}\leq C \eta(1) e^{-\frac{1}{3/4-t-h}+\frac{1}{3/4-t+h}}% = C \eta(1) e^{-\frac{2h}{(3/4-t)^2-h^2}}
    \leq C \eta(1) e^{-\frac{2h}{-h-h^2}}\]
    which is also bounded.
\end{proof}

\end{document}